\numberwithin{equation}{section}
\newtheorem{theorem}{Theorem}[section]
\newtheorem{proposition}[theorem]{Proposition}
\newtheorem{lemma}[theorem]{Lemma}
\newtheorem{corollary}[theorem]{Corollary}
\theoremstyle{definition}
\newtheorem{definition}[theorem]{Definition}
\newtheorem{example}[theorem]{Example}
\newtheorem{remark}[theorem]{Remark}
\title{$N$-factor complexity of the infinite Fibonacci sequence and digital sequences}
\author{Yanxi Li}
\address[Y.-X. Li]{School of Mathematics, South China University of Technology, Guangzhou 510640, China}
\email{ma\_lyx@mail.scut.edu.cn}
\author{Wen Wu$^*$}\thanks{$^*$Wen Wu is the corresponding author.}
\address[W. Wu]{School of Mathematics, South China University of Technology, Guangzhou 510640, China}
\email[corresponding author]{wuwen@scut.edu.cn}
\date{}
\begin{document}
\begin{abstract}
	In this paper, we introduce a variation of the factor complexity, called the $N$-factor complexity, which allows us to characterize the complexity of sequences on an infinite alphabet. We evaluate precisely the $N$-factor complexity for the infinite Fibonacci sequence $\mathbf{f}$ given by Zhang, Wen and Wu [Electron. J. Comb., 24 (2017)]. The $N$-factor complexity of a class of digit sequences, whose $n$th term is defined to be the number of occurrences of a given block in the base-$k$ representation of $n$, is also discussed.
\end{abstract}
\keywords{$N$-factor complexity; inifinite Fibonacci sequence; digital sequences}
\subjclass[2020]{68R15 and 11B85}
\maketitle

\section{Introduction}
The factor complexity of infinite sequences on a finite alphabet was well studied in recent decades. For an infinite sequence $\mathbf{a}=a(0)a(1)a(2)a(3)\dots$, its factor complexity $P_{\mathbf{a}}(n)$ counts the number of distinct subwords $a(i)a(i+1)\dots a(i+n-1)$ ($i\geq 0$) where $n\geq 1$ is an integer. It measures the complexity or randomness of an infinite sequence. It is well known that an ultimately periodic sequence has a bounded factor complexity (see Morse and Hedlund \cite{MH39}). Among the non-periodic sequences, the Sturmian sequences have the smallest factor complexity (see Morse and Hedlund \cite{MH40}). For any morphic sequence $\mathbf{a}$ on a finite alphabet, one has $P_{\mathbf{a}}(n)=O(n^2)$ (see Allouche and Shallit \cite{Allouche}). For the study of sequences with linear factor complexity, see for example Rote \cite{Rote1994}, Tan et al \cite{Tan2008},  Cassaigne et al \cite{Cassaigne2017, Cassaigne2022} and Cassaigne \cite{Cassaigne}, etc.
In addition to factor complexity, many other variations of complexity were also proposed to depict the characteristics of a sequence, such as permutation complexity (see Makarov \cite{Makarov}, Widmer \cite{Widmer}), Abelian complexity (see Richomme et al \cite{Richomme2010, Richomme2011}), maximal pattern complexity (see Kamae et al \cite{Kamae}), palindrome complexity (see Allouche et al \cite{Allouche2003} and Bal\'a\v{z}i et al \cite{Balazi2007}), etc.

The above complexities are proposed for infinite sequences on a finite alphabet. While for the sequences on an infinite alphabet, such as the sequence of all prime numbers, the sequence of Fibonacci numbers, $\kappa$-regular sequences, morphic sequences on an infinite alphabet, the factor complexity may be infinite as well. The factor complexity fail to tell their differences in the sense of complexity. In this paper, we introduce a variation of the factor complexity, called the \emph{$N$-factor complexity}, which allows us to characterize the complexity of the sequences on infinite alphabets.

\subsection{\texorpdfstring{$N$}{N}-factor complexity}
First recall the definition of factor complexity. Let $\Sigma\subset\mathbb{N}$ be an alphabet and  $\mathbf{a}=a(0)a(1)a(2)a(3)\dots$ be an infinite sequence (or infinite word) on $\Sigma$. For $n\geq 1$, denote by $\mathbf{a}[i, i+n-1]:=a(i)a(i+1)\dots a(i+n-1)$, which is called a factor (or subword) of $\mathbf{a}$.
Define $\mathcal{F}_{\mathbf{a}}(n)=\{\mathbf{a}[i, i+n-1]\colon i\geq 0\}$ as the set of all factors of $\mathbf{a}$ of length $n$. Let $\mathcal{F}_{\mathbf{a}}=\cup_{n\geq 0}\mathcal{F}_{\mathbf{a}}(n)$ be the set of factors of $\mathbf{a}$. 
\begin{definition}[Factor complexity]
	Let $\mathbf{a}\in\Sigma^{\infty}$ be an infinite sequence on a finite alphabet $\Sigma\subset\mathbb{N}$. The \emph{factor complexity} $P_{\mathbf{a}}(n)$ of $\mathbf{a}$ is defined to be $\sharp\mathcal{F}_{\mathbf{a}}(n)$, i.e. the number of distinct factors (of length $n$) of $\mathbf{a}$.
\end{definition}

For $N\geq 1$, define $\mathcal{F}_{\mathbf{a}}(n,N)$ as the set of factors of length $n$ (of $\mathbf{a}$) that are composed by alphabets in $\Sigma_N=\{0,1,\dots,N-1\}$. Namely, \[\mathcal{F}_{\mathbf{a}}(n,N)=\mathcal{F}_{\mathbf{a}}(n)\cap\Sigma_{N+1}^n=\big\{\mathbf{a}[i, i+n-1]\colon i\geq 0,\text{ and } a(j)\leq N \text{ for all }i\leq j\leq i+n-1\big\}.\]
\begin{definition}[$N$-factor complexity] 
	Let $\mathbf{a}\in\Sigma^{\infty}$ where $\Sigma\subset\mathbb{N}$ is an (infinite) alphabet. For any integer $N\geq 1$, the \emph{$N$-factor complexity} of $\mathbf{a}$ is defined to be \[P_{\mathbf{a}}(n,N) = \sharp\mathcal{F}_{\mathbf{a}}(n,N).\]
\end{definition}	
\begin{remark}
	Since $P_{\mathbf{a}}(n,N)\leq \sharp\Sigma_{N}^{n}=N^n$, the $N$-factor complexity, which depends on both $n$ and $N$, is finite. Noting that $\mathcal{F}_{\mathbf{a}}(n,N-1)\subset \mathcal{F}_{\mathbf{a}}(n,N)$, the function $P_{\mathbf{a}}(n,N)$ is non-decreasing with respect to $N$. 
\end{remark}	
In the following, we focus on the $N$-factor complexity of the infinite Fibonacci sequence $\mathbf{f}$ introduced in \cite{Zhang2017} and a class of digital sequences $\mathbf{s}_{\underline{w}}=(s(n))_{n\geq 0}$ where $s(n)$ is defined in terms of the number of occurrences of a given block $\underline{w}\in\Sigma_{k}^*$ in the base-$k$ representation of $n$.


\subsection{\texorpdfstring{$N$}{N}-factor complexity of the infinite Fibonacci sequence}
The classic Fibonacci sequence is the fixed point of the Fibonacci morphism $\sigma$ defined by $0\mapsto01$, $1\mapsto0$. Zhang et al \cite{Zhang2017} extended the Fibonacci morphsim to the infinite alphabet $\mathbb{N}$. They introduced the morphism 
\begin{equation}\label{eq:tau-def}
	\tau: \begin{cases}
		(2i) \mapsto (2i)(2i+1), \\
		(2i+1) \mapsto (2i+2),
	\end{cases}\ \text{for all }i\in \mathbb{N}
\end{equation}
and defined the \emph{infinite Fibonacci sequence} $\mathbf{f}$ as the fixed point of $\tau$. Namely, $\mathbf{f}=\tau^{\infty}(0)$ and 
\[\mathbf{f}=0\,1\,2\,2\,3\,2\,3\,4\,2\,3\,4\,4\,5\dots\]
It is easy to see that $P_{\mathbf{f}}(n)=\infty$ for any $n\geq 1$. 

Our first result fully characterizes the $N$-factor complexity of the infinite Fibonacci sequence $\mathbf{f}$. Let $(F_k)_{k\geq 0}$ be the Fibonacci numbers defined by 
\begin{equation}\label{eq:fib-num}
	F_0=F_1=1\text{ and }F_{k}=F_{k-1}+F_{k-2}\text{ for all }k\geq 2.
\end{equation}
Write $\phi(n)=\max\{k:F_k<n\}$. Below, we give the explicit expression of $P_{\mathbf{f}}(n,N)$ for all $N\geq 0$ and $n\geq 1$. 
\begin{theorem} \label{thm: seq_f}
	For all $N\geq0$ and $n\geq1$,
	\[P_{\mathbf{f}}(n,N) = 
	\begin{cases}
		0, &\text{if }0\leq N\leq \phi(n)-2,\\
		F_{N+2}-n, &\text{if } N = \phi(n)-1 \text{ or } \phi(n),\\
		C_2N^2 + C_1N +C_0 &\text{if } N \geq \phi(n)+1 \text{ and }N+\phi(n) \text{ is even,}\\
		C_2N^2 + C_1N +C_0^\prime &\text{if } N \geq \phi(n)+1 \text{ and }N+\phi(n) \text{ is odd,}
	\end{cases}\]
	where $C_2 = \frac{n-1}{4}$, $C_1 = \frac{1}{2}(F_{\phi(n)+2}-(n-1)\phi(n))$, $C_0 = (1 - \frac{1}{2}\phi(n))F_{\phi(n)+2} + \frac{n-1}{4}\phi(n)^2 - n$ and $C_0^\prime = F_{\phi(n)+1} + \frac{1}{2}(1-\phi(n))F_{\phi(n)+2} + \frac{n-1}{4}\phi(n)^2 + \frac{1-5n}{4}$.
\end{theorem}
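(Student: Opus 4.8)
The plan is to exploit the identity $\mathcal{F}_{\mathbf{f}}(n,N)=\{w\in\mathcal{F}_{\mathbf{f}}(n):\max w\le N\}$, which is immediate from the definition of the $N$-factor complexity. Hence the increment $Q(n,N):=P_{\mathbf{f}}(n,N)-P_{\mathbf{f}}(n,N-1)$ counts exactly the length-$n$ factors of $\mathbf{f}$ whose largest letter equals $N$, and $P_{\mathbf{f}}(n,N)=\sum_{M=0}^{N}Q(n,M)$. I would reduce the whole theorem to determining $Q(n,N)$ and then summing. The structural inputs I would set up first are: the first occurrence of the letter $m$ sits at position $F_{m+1}-1$, so the prefix $B_N$ of $\mathbf{f}$ of length $F_{N+2}-1$ (ending just before the first $N+1$) contains every factor all of whose letters are $\le N$; the morphic recursion $B_N=\tau(B_{N-1})$ for odd $N$ and $B_N=\tau(B_{N-1})\,N$ for even $N$; and the commutation $\tau\gamma=\gamma\tau$, where $\gamma$ denotes the shift adding $2$ to every letter. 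Writing $\mathbf{f}_k:=\tau^k(0)$, these give the self-similar decomposition $\mathbf{f}=01\,\gamma(\mathbf{f}_0)\gamma(\mathbf{f}_1)\gamma(\mathbf{f}_2)\cdots$, in which the maximal blocks of letters $\le N$ are precisely the words $\gamma^{j}(B_{N-2j})$ for $0\le j\le\lfloor N/2\rfloor$, each of which is in fact a factor of $B_N$.

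For the boundary ranges I would argue directly from $B_N$. When $F_{N+2}-1<n$, that is $N\le\phi(n)-2$, the prefix $B_N$ is too short and $P_{\mathbf{f}}(n,N)=0$. When $N=\phi(n)-1$ or $N=\phi(n)$, the only block long enough to carry a length-$n$ factor is $B_N$ itself, and one checks that its $F_{N+2}-n$ length-$n$ windows are pairwise distinct, giving $P_{\mathbf{f}}(n,N)=F_{N+2}-n$; in particular $Q(n,\phi(n))=F_{\phi(n)}$, which seeds the recursion below.

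The heart of the argument is the recursion $Q(n,N)=Q(n,N-2)+(n-1)$, valid once $N\ge\phi(n)+1$. I would prove it by splitting the length-$n$ factors with maximal letter $N$ according to whether they straddle a \emph{descent} of $\mathbf{f}$ back to the letter $2$, i.e.\ a boundary between consecutive self-similar segments $\gamma(\mathbf{f}_k)$. The factors that avoid such a descent have minimal letter $\ge2$ and desubstitute under $\gamma$: the shift $\gamma$ is a bijection from the length-$n$ factors with maximal letter $N-2$ onto these, contributing $Q(n,N-2)$. The factors straddling the unique descent whose surrounding window reaches exactly $N$ are obtained by sliding a length-$n$ window across that descent; there are exactly $n-1$ of them, and they are new. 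Solving the two-step recursion from the seeds $Q(n,\phi(n))$ and $Q(n,\phi(n)+1)$ yields $Q(n,N)=\tfrac{n-1}{2}N+c$ with an intercept $c$ depending only on the residue of $N-\phi(n)$ modulo $2$; summing $P_{\mathbf{f}}(n,N)=\sum_{M\le N}Q(n,M)$ and collecting terms then produces the quadratic $C_2N^2+C_1N+C_0$, with the parity of $N+\phi(n)$ selecting $C_0$ or $C_0'$.

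The main obstacle is the exactness in the recursion step: showing that $\gamma$ is genuinely a bijection onto the non-descending factors (no factor with minimal letter $\ge2$ is missed or double counted) and that precisely $n-1$ new factors arise at the relevant descent, uniformly in $N\ge\phi(n)+1$. This rests on a precise description of the return structure of $\mathbf{f}$ to its low letters, which I would extract from the segment decomposition together with the first-occurrence positions $F_{m+1}-1$. The remaining delicate point is pure bookkeeping: fixing the two base values $Q(n,\phi(n))$ and $Q(n,\phi(n)+1)$ and carrying the parity of $N-\phi(n)$ correctly through the summation so that the closed forms $C_0$ and $C_0'$ emerge exactly as stated.
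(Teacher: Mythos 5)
Your strategy (reduce to the first difference $Q(n,N)=P_{\mathbf{f}}(n,N)-P_{\mathbf{f}}(n,N-1)$, split the factors containing the letter $N$ into those that desubstitute under the letter-shift $\gamma$ and those that straddle a descent to the letter $2$, then sum) is essentially the paper's argument repackaged as a two-step recursion, and for $N\geq\phi(n)+2$ your bijection-plus-$(n-1)$ count is sound. The genuine gap is at the claimed starting level: the recursion $Q(n,N)=Q(n,N-2)+(n-1)$ is \emph{false} at $N=\phi(n)+1$. The reason is that at this level $F_{N-1}=F_{\phi(n)}<n$, so a length-$n$ window can contain the \emph{entire} segment $\gamma(\tau^{N-2}(0))$ (whose last letter is the first occurrence of $N$ in $\mathbf{f}$) together with letters to its left; such a window has maximal letter exactly $N$, ends at that first occurrence of $N$, and is not one of your $n-1$ windows sliding across the descent $N\,2$ that follows the segment (all of those contain a letter $2$ after the $N$). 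So the number of new straddling factors at $N=\phi(n)+1$ is $n$, not $n-1$. Concretely, take $n=3$, so $\phi(n)=2$, and $N=3$: the factors of $\mathbf{f}=0122323423445\dots$ with maximal letter $3$ are $223$, $232$, $323$, but your count produces only $232$ and $323$, and your recursion would give $Q(3,3)=Q(3,1)+2=0+2=2$ instead of the correct value $F_3=3$.

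This cannot be dismissed as the ``pure bookkeeping'' of fixing the seeds. The correct seeds are $Q(n,\phi(n))=F_{\phi(n)}$ (which you compute) and $Q(n,\phi(n)+1)=F_{\phi(n)+1}$ (which you do not); the latter is exactly the value your $(n-1)$-count gets wrong, and it cannot be reached from below, since $Q(n,\phi(n)-1)=F_{\phi(n)+1}-n$ and applying your recursion there yields $F_{\phi(n)+1}-1$. If one follows your proposal literally, every level with $N+\phi(n)$ odd comes out one too small, so the error in $P_{\mathbf{f}}(n,N)$ grows like $(N-\phi(n))/2$ and corrupts the linear coefficient $C_1$, not merely the constant term. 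The repair is to restrict the recursion to $N\geq\phi(n)+2$ and to prove the seed $Q(n,\phi(n)+1)=F_{\phi(n)+1}$ by a separate count that allows windows to meet several descents --- which is precisely what Proposition \ref{prop:12} of the paper does uniformly for all $N\geq\phi(n)+1$ via the count $F_{N-2j_0}+(n-1)j_0$. With that seed in place, the rest of your plan (the $\gamma$-bijection for factors with minimal letter at least $2$ meeting no descent, the $n-1$ new factors at the unique descent $N\,2$, and the final summation with parity bookkeeping) does yield the stated formula.
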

\begin{remark}
	For a fixed $N$, when $n$ is large enough, one see that $P_{\mathbf{f}}(n,N)$ always decreases to $0$ since $0\leq N\leq \phi(n)-2$ holds for any $n\geq F_{N+2}$. On the other hand, for any fixed $n\geq 2$, since $C_1>0$ and $C_2>0$ for $n\geq2$, we have $\lim\limits_{N\to \infty}P_{\mathbf{f}}(n,N)=\infty$. Nevertheless, we obtain that \[\lim_{N\to \infty}\frac{P_{\mathbf{f}}(n,N)}{N^2} =\frac{n-1}{4}.\]
\end{remark}

\subsection{\texorpdfstring{$N$}{N}-factor complexity of digital sequences}
Let $k\geq 2$ be an integer. For any $m\in\mathbb{N}$ with $k^{\ell-1}\leq m < k^{\ell}$ for some integer $\ell\geq 1$, one has $m=\sum_{i=0}^{\ell-1}m_ik^{\ell-1-i}$ where $m_i\in\Sigma_k=\{0,1,\dots,k-1\}$ for all $i$. The \emph{base-$k$ representation} of $m$ is
\[(m)_k:=m_{0}m_{1}\dots m_{\ell-1}.\]
\begin{definition}[Digitial sequences]\label{def:digital-seq}
	Fix $\underline{w}=w_0w_1\dots w_{q-1}\in\Sigma_k^{q}$. For any $m\in\mathbb{N}$, let $s_{k,\underline{w}}(m)$ be the number of occurrences of $\underline{w}$ in the base-$k$ representation of $m$, i.e.,
	\[s_{k,\underline{w}}(m):=\sharp\big\{0\leq i< |(m)_k|-q \colon m_{i}m_{i+1}\dots m_{i+q-1}=\underline{w}\big\}\] where $(m)_k=m_0m_1\dots m_{\ell-1}$.   
	We call $\mathbf{s}_{k,\underline{w}}=(s_{k,\underline{w}}(m))_{m\geq 0}$ a \emph{$(k,\underline{w})$-digital sequence}. 
\end{definition}

\begin{example}
	When $k=2$, we have 
	\renewcommand{\arraystretch}{1.25}
	\begin{center}
		\begin{tabular}{c|c*{10}{m{0.8cm}<{\centering}}}
			$m$ & 0 & 1 & 2 & 3 & 4 & 5 & 6 & 7 & 8 & \dots\\
			\hline
			$(m)_2$ & 0 & 1 & 10 & 11 & 100 & 101 &  110 & 111 & 1000 & \dots\\
			\hline
			$s_{2,1}(m)$ & 0 & 1 & 1 & 2 & 1 & 2 & 2 & 3 & 1 & \dots\\
			\hline
			$s_{2, 11}(m)$ & 0 & 0 & 0 & 1 & 0 & 0 & 1 & 2 & 0 & \dots
		\end{tabular}
	\end{center}
\end{example}
We remark that $\mathbf{s}_{k,\underline{w}}$ is an unbounded $\kappa$-regular sequence (see \cite{Allouche} for more information on $\kappa$-regular sequences). In particular, the sequences $\mathbf{s}_{2,1}\pmod{2}$  and $\mathbf{s}_{2,11}\pmod{2}$ are the well-known Thue-Morse sequence and the Rudin-Shapiro sequence on the alphabet $\{0,1\}$, respectively. When $k$ and $\underline{w}$ are clear for the context, we simply write $s(m):=s_{k,\underline{w}}(m)$ and $\mathbf{s}:=\mathbf{s}_{k,\underline{w}}$.

Our second result describes the $N$-factor complexity of the digital sequences. For all $x\in\mathbb{R}$, the notion $\lceil x\rceil$ refers to the smallest integer that is larger or equal to $x$.
\begin{theorem}\label{thm: seq_s}
	Let $n\geq 1$ and $M= \lceil\log_k(n)\rceil+2$. For all $N\geq M$,
	\[P_{\mathbf{s}}(n,N)=\begin{cases}
		d_0N+d_1, & \text{ if }\underline{w}\notin\{0\}^*\cup\{k-1\}^{*},\\
		d_3 N^2+d_4 N+d_5, & \text{ if }\underline{w}\in\{0\}^*\cup\{k-1\}^{*},
	\end{cases}\]
	where $d_0=P_{\mathbf{s}}(n,M)-P_{\mathbf{s}}(n,M-1)$, $d_1=P_{\mathbf{s}}(n,M)-d_0 M$, $d_3=(n-1)/2$, $d_4=d_0+(1-2M)(n-1)/2$ and $d_5=P_{\mathbf{s}}(n,M)+({M}^2-M)(n-1)/2$.
\end{theorem}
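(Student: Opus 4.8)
The plan is to prove Theorem \ref{thm: seq_s} by first understanding the structure of the factors of $\mathbf{s}$ built from small digits, and then establishing that for $N \geq M$ the increment $P_{\mathbf{s}}(n,N+1) - P_{\mathbf{s}}(n,N)$ stabilizes into a pattern that yields the claimed linear or quadratic formula. The key observation I would start from is that a factor $\mathbf{s}[i,i+n-1]$ is determined by the values $s(i), s(i+1),\dots,s(i+n-1)$, and since $s(m)$ counts occurrences of $\underline{w}$ in $(m)_k$, consecutive values $s(i),\dots,s(i+n-1)$ change in a controlled way as $m$ runs over a block of $n$ consecutive integers. When $N \geq M = \lceil\log_k(n)\rceil + 2$, the window $[i,i+n-1]$ of length $n$ and the constraint that all values are at most $N$ together guarantee enough ``room'' in the base-$k$ expansions that the combinatorial behavior of new factors no longer depends on $N$ in an essential way; this is precisely the threshold beyond which the formula becomes exact.

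First I would analyze how $s$ behaves on an interval of $n$ consecutive integers, fixing the length-$n$ factor as a function of a ``base'' integer $i$ and the carry structure of adding $0,1,\dots,n-1$ to $i$. The crucial distinction driving the two cases is whether $\underline{w} \in \{0\}^* \cup \{k-1\}^*$: if $\underline{w}$ is a power of the digit $0$ or of the digit $k-1$, then appending leading zeros (for $\{0\}^*$) or the behavior at the top of a digit block (for $\{k-1\}^*$) creates unboundedly many new occurrences as one moves to larger $N$, producing an extra linear factor in $N$ and hence the quadratic growth $d_3 N^2 + d_4 N + d_5$. Otherwise the number of genuinely new factors introduced when $N$ increases by one stabilizes to a constant, giving linear growth $d_0 N + d_1$. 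I would make this precise by showing that for $N \geq M$ the difference $P_{\mathbf{s}}(n,N) - P_{\mathbf{s}}(n,N-1)$ equals a constant in the first case and an arithmetic progression (linear in $N$) in the second.

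Concretely, the strategy is to prove that for all $N \geq M$,
\[
P_{\mathbf{s}}(n,N) - P_{\mathbf{s}}(n,N-1) =
\begin{cases}
d_0, & \underline{w}\notin\{0\}^*\cup\{k-1\}^*,\\
(n-1)N + \big(d_0 - (n-1)M\big) + \tfrac{n-1}{2}, & \underline{w}\in\{0\}^*\cup\{k-1\}^*,
\end{cases}
\]
and then recover the closed forms by telescoping from the base value $P_{\mathbf{s}}(n,M)$. In the linear case the telescoping is immediate: $P_{\mathbf{s}}(n,N) = P_{\mathbf{s}}(n,M) + (N-M)d_0 = d_0 N + d_1$ with $d_1 = P_{\mathbf{s}}(n,M) - d_0 M$, matching the definition of $d_1$. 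In the quadratic case, summing the arithmetic progression $\sum_{j=M+1}^{N}\big((n-1)j + c\big)$ produces a quadratic in $N$ with leading coefficient $(n-1)/2 = d_3$, and a careful bookkeeping of the sum together with the choice of the constant term reproduces $d_4$ and $d_5$; the identities $d_4 = d_0 + (1-2M)(n-1)/2$ and $d_5 = P_{\mathbf{s}}(n,M) + (M^2-M)(n-1)/2$ are exactly what the telescoping forces.

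The main obstacle will be the combinatorial count of how many new factors appear at each step, i.e.\ establishing the increment formula above. This requires a precise description of which length-$n$ factors use the digit value $N$ but no value exceeding $N$, and counting them without overcounting. The delicate point is to show that once $N \geq M$, the ``extremal'' positions where $s$ attains the value $N$ within a window of $n$ consecutive integers contribute a count independent of $N$ in the generic case, but scale linearly with $N$ precisely when $\underline{w}$ consists of a single repeated digit $0$ or $k-1$, since only then can a single long run in the base-$k$ expansion force the occurrence count upward in direct proportion to the number of available digit positions $N$ permits. Pinning down this dichotomy, and verifying that the threshold $M = \lceil \log_k(n)\rceil + 2$ is exactly where the transient behavior of $P_{\mathbf{s}}(n,N)$ ends, is where the real work lies; the remaining passage to the closed-form constants $d_0,\dots,d_5$ is then routine summation.
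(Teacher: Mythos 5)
Your skeleton---show that the first difference $P^{(1)}_{\mathbf{s}}(n,N)=P_{\mathbf{s}}(n,N)-P_{\mathbf{s}}(n,N-1)$ is eventually constant when $\underline{w}\notin\{0\}^*\cup\{k-1\}^*$ and eventually an arithmetic progression with common difference $n-1$ when $\underline{w}\in\{0\}^*\cup\{k-1\}^*$, then telescope---is exactly the paper's own reduction: Theorem \ref{thm:1} asserts $P^{(2)}_{\mathbf{s}}(n,N)=0$ resp.\ $n-1$ for all $N\geq M$, and Theorem \ref{thm: seq_s} is deduced from it in one line. The genuine gap is that you never prove the increment statement; you label it ``the main obstacle'' and ``where the real work lies'' and support it only with the heuristic that a repeated digit $0$ or $k-1$ lets a long run push occurrence counts up in proportion to $N$. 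That increment statement \emph{is} the theorem, and proving it occupies all of Section \ref{sec:digital}. Concretely, you are missing: (i) the shift correspondence (Propositions \ref{prop:7} and \ref{prop:2}), $\underline{u}\in\mathcal{F}^{(1)}_{\mathbf{s}}(n,N)$ iff $\underline{u}+\underline{1}\in\mathcal{F}^{(1)}_{\mathbf{s}}(n,N+1)$, whose proof is itself long and case-heavy, and which identifies $P^{(2)}_{\mathbf{s}}(n,N)$ with the number of factors in $\mathcal{F}^{(1)}_{\mathbf{s}}(n,N)$ containing the letter $0$ (or $1$ when $(k,\underline{w})=(2,1)$); (ii) for the run case, the structure lemmas (Lemmas \ref{lem:2}--\ref{lem:5}) and the argument of Propositions \ref{prop:4_1}/\ref{prop:4_2} that the sets $\mathcal{G}_i$ of such factors are each either empty or equal to one fixed set of cardinality exactly $n-1$ --- your heuristic gives no mechanism for the exact count, in particular no way to rule out overcounting across different scales; (iii) for the generic case, the balance property (Lemma \ref{lem:1}) together with Lemma \ref{prop-1}, which show that no factor of $\mathcal{F}^{(1)}_{\mathbf{s}}(n,N)$ contains $0$ at all, so that the count is $0$. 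Without these, the proposal restates the theorem in difference form rather than proving it.

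Moreover, the one concrete formula you commit to is wrong. By the definition of $d_0$, the increment at $N=M$ is $P^{(1)}_{\mathbf{s}}(n,M)=d_0$; since the second difference equals $n-1$, the correct increment for $N\geq M$ is $P^{(1)}_{\mathbf{s}}(n,N)=d_0+(N-M)(n-1)$. Your quadratic-case formula carries a spurious extra $\tfrac{n-1}{2}$: it gives $d_0+\tfrac{n-1}{2}$ at $N=M$, and after summation it yields linear coefficient $d_0+(n-1)(1-M)$ rather than $d_4=d_0+(1-2M)(n-1)/2$. A further caution about the step you call routine: telescoping the \emph{correct} increment gives constant term $P_{\mathbf{s}}(n,M)-Md_0+(M^2-M)(n-1)/2$, which differs from the stated $d_5$ by $-Md_0$; for instance, with $(k,\underline{w})=(2,1)$ and $n=1$ one has $P_{\mathbf{s}}(1,N)=N+1$, $M=2$, $d_0=1$, while $d_3N^2+d_4N+d_5$ as stated evaluates to $N+3$. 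So your assertion that the identities for $d_4$ and $d_5$ ``are exactly what the telescoping forces'' fails twice over: your increment does not produce them, and the correct increment produces $d_4$ but forces a constant term different from the stated $d_5$ (an apparent slip in the theorem's statement that a careful telescoping would have exposed).
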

\begin{remark}
	The value of $P_{\mathbf{s}}(n,N)$ depends on the initial values $P_{\mathbf{s}}(n,M)$ and $P_{\mathbf{s}}(n,M-1)$, both of which vary with different $k$, $\underline{w}$ and $n$. In particular, we see that when $\underline{w}\in\{0\}^*\cup\{k-1\}^{*}$,
	\begin{equation}\label{eq:limit-1}
		\lim_{N\to \infty}\frac{P_{\mathbf{s}}(n,N)}{N^2} =\frac{n-1}{2}
	\end{equation}
	and when $\underline{w}\notin\{0\}^*\cup\{k-1\}^{*}$, 
	\begin{equation}\label{eq:limit-2}
		\lim_{N\to \infty}\frac{P_{\mathbf{s}}(n,N)}{N} =d_0
	\end{equation}
	where $d_0$ depends on $k,\underline{w}$ and $n$. These limits \eqref{eq:limit-1} and $\eqref{eq:limit-2}$ could serve as a complexity for the (unbounded) integer sequence $\mathbf{s}_{k.\underline{w}}$. However, for $\underline{w}\notin\{0\}^*\cup\{k-1\}^{*}$, it is quite challenging to determine $d_0$ precisely. Furthermore, in Proposition \ref{prop: conj}, we show that $d_0$ is invariant while we replace $\underline{w}$ by its conjugate.
\end{remark}
The paper is organized as follows. In Section \ref{sec:notation}, we recall certain operations on words and give a list of notations that are used in the paper. In Section \ref{fibonacci}, we discuss the $N$-factor complexity of the infinite Fibonacci sequence and prove Theorem \ref{thm: seq_f}. In Section \ref{sec:digital}, we study some combinatorial properties of the digital sequences and prove Theorem \ref{thm: seq_s}.

\section{Notations}\label{sec:notation}
In this section, we introduce notations that are used in the paper. We also provide a table of notation; see Table \ref{table:notation}.

{\bf Finite or infinite words.} Let $\mathcal{A}$ be an \emph{alphabet} whose elements are called \emph{letters}. A (finite) \emph{word} of length $n$ on the alphabet $\mathcal{A}$ is a list of $n$ letters $w_0w_1\dots w_{n-1}$ where $w_i\in\mathcal{A}$ for all $i$. Finite words are denoted by underlined characters $\underline{w}$, $\underline{u}$, $\underline{v}$, etc.  The \emph{length} of a finite word $\underline{w}$ is written as $|\underline{w}|$. The empty word is denoted by $\varepsilon$. Let $\mathcal{A}^n$ be the set of all words of length $n$ on the alphabet $\mathcal{A}$. Then $\mathcal{A}^*=\cup_{n\geq 0}\mathcal{A}^n$ is the set of all finite words on $\mathcal{A}$ where $\mathcal{A}^0=\{\varepsilon\}$.  For $\underline{x}=x_0x_1\dots x_{i}$, $\underline{y}=y_0y_1\dots y_j\in\mathcal{A}^*$, their \emph{concatenation}, denoted by $\underline{x}\underline{y}$, is the word $x_0x_1\dots x_{i}y_0y_1\dots y_{j}$. For all integer $q\geq 1$, let $\underline{x}^q=\underline{x}\dots\underline{x}$ be the $q$th concatenation of $\underline{x}$.  For $\underline{u}=u_0u_1\dots u_{h}\in\mathcal{A}^*$, write \[\underline{u}[i,j]=u_iu_{i+1}\dots u_{j}\] where $0\leq i\leq j\leq h$. In addition, we define $\underline{u}[i,j]=\varepsilon$ for all $j>h$. For $\underline{x},\underline{y}\in\mathcal{A}^*$, if $\underline{x}=\underline{y}[i,j]$ for some $0\leq i\leq j\leq |\underline{y}|-1$, then $\underline{x}$ is a \emph{subword} (or \emph{factor}) of $\underline{y}$ and we write $\underline{x}\prec\underline{y}$. In particular, if $i=0$ (resp. $j=|\underline{y}|-1$), then we call $\underline{x}$ a \emph{prefix} (resp. \emph{suffix}) of $\underline{y}$, denoted by $\underline{x}\triangleleft\underline{y}$ (resp. $\underline{x}\triangleright\underline{y}$). The \emph{number of occurrences} of $\underline{x}$ in $\underline{y}$ is defined as \[|\underline{y}|_{\underline{x}}=\sharp\{0\leq i\leq |\underline{y}|-1 \colon \underline{y}[i,i+|\underline{x}|-1]=\underline{x}\}.\]
The \emph{infinite words} (or sequences) on $\mathcal{A}$ are denoted by bold characters, such as $\mathbf{a}$, $\mathbf{s}$, $\mathbf{c}$, etc. We write $\mathbf{a}=(a(n))_{n\geq 0}=a(0)a(1)a(2)\cdots$ where $a(n)$ is the $n$th term of $\mathbf{a}$. The set of all infinite words on $\mathcal{A}$ is $\mathcal{A}^{\infty}$.  Similarly, we write $\mathbf{a}[i,j]=a(i)a(i+1)\dots a(j)$ for $0\leq i\leq j$. 
\medskip

{\bf Base-$k$ representation.} 
For all integer $k\geq 1$, let $\Sigma_k=\{0,1,\dots,k-1\}$. For all $m\in\mathbb{N}$ with $k^{\ell-1}\leq m<k^{\ell}$ for some integer $\ell\geq 1$, we have $m=\sum_{i=0}^{\ell-1}m_ik^{\ell-1+i}$ where $m_i\in\Sigma_k$ for all $i=0,1,\dots,\ell-1$ and $m_0\neq 0$. The \emph{base-$k$ representation} of $m$ is define as \[(m)_k=m_0m_1\dots m_{\ell-1}\in\Sigma_k^*.\] Conversely, for any $\underline{u}=u_0u_1\dots u_h\in\Sigma_k^*$, the \emph{base-$k$ realization} of $\underline{u}$ is defined as \[[\underline{u}]_k=\sum_{i=0}^h u_ik^{h-i}.\] It is possible that $([\underline{u}]_k)_k\neq \underline{u}$ since $u_0$ is not necessarily zero.
\medskip

{\bf Sets of factors.} 
Let $\Sigma\subset\mathbb{N}$ be an (infinite) set of nonnegative integers. Let $\mathbf{a}\in\Sigma^{\infty}$ be an infinite sequence. Recall that for all $n\geq 1$ and $N\geq 1$, $\mathcal{F}_{\mathbf{a}}(n)$ is the set of all factors (of length $n$) of $\mathbf{a}$ and $\mathcal{F}_{\mathbf{a}}(n,N)=\mathcal{F}_{\mathbf{a}}(n)\cap\Sigma_{N+1}^n$. Since $\mathcal{F}_{\mathbf{a}}(n-1)\subset\mathcal{F}_{\mathbf{a}}(n)$, we see that $\mathcal{F}_{\mathbf{a}}(n,N-1)\subset \mathcal{F}_{\mathbf{a}}(n,N)$. In the study of $N$-factor complexity, we need the to count the factors in which the letter $N$ occurs. For this purpose, we define \[\mathcal{F}_{\mathbf{a}}^{(1)}(n,N):=\mathcal{F}_{\mathbf{a}}(n,N)\backslash\mathcal{F}_{\mathbf{a}}(n,N-1).
\] The first and second order difference of $P_{\mathbf{a}}(n,N)$ on the variable $N$ are denoted by 
\begin{align}
	P_{\mathbf{a}}^{(1)}(n,N) & := P_{\mathbf{a}}(n,N)-P_{\mathbf{a}}(n,N-1)\quad (N\geq 2),\label{eq:diff-1}\\
	P_{\mathbf{a}}^{(2)}(n,N) & := P_{\mathbf{a}}^{(1)}(n,N)-P_{\mathbf{a}}^{(1)}(n,N-1)\quad(N\geq 3).\label{eq:diff-2}
\end{align}
Then $P_{\mathbf{a}}^{(1)}(n,N)=\sharp\mathcal{F}_{\mathbf{a}}^{(1)}(n,N)$. 

\begin{table}[htbp]
	\centering
	\begin{tabularx}{\textwidth}{lX}
		\hline
		$\mathbb{N}$, $\mathbb{N}_{\geq1}$ & Non-negative integers; positive integers. \\
		$\lceil n\rceil$, $\lfloor n\rfloor$ &  The smallest integer but not smaller than $n$; the largest integer but not larger than $n$. \\
		$\Sigma_N$ & A finite alphabet consisting of $0,1,\dots , N-1$. \\
		$\Sigma^*$ & The set of words of arbitrary length composed by the elements in $\Sigma$. \\
		$\Sigma^n$ & The set of words of length $n$ composed by the elements in $\Sigma$. \\ 
		$\underline{w}$ & An underlined character represents a word $\underline{w} = w_0w_1\dots w_{|\underline{w}|-1}$.\\
		$\mathbf{a}$ & A bold character represents a sequence $\mathbf{a} = a(0)a(1)\dots$.\\
		$\underline{w}[i,j]$, $\mathbf{a}[i,j]$ & The segment $w_iw_{i+1}\dots w_j$; the segment $a(i)a(i+1)\dots a(j)$.\\
		$\varepsilon$ & The empty word. \\
		$\underline{w}^q$ & A word composed by $q$ consecutive $\underline{w}$'s.  \\
		$(\,\cdot \,)_k$ & The base-$k$ representation of a non-negative integer.\\
		$[\,\cdot\, ]_k$ & The base-$k$ realization of a word in $\Sigma_k^*$, e.g. $[w_0\dots w_{q-1}]_k = \sum_{i=0}^{q-1}w_ik^{q-1-i}$.\\
		$|\cdot|$ & The length of a word.\\
		$|\cdot|_{\underline{w}}$ & The number of occurrence of $\underline{w}$ in a word.\\
		$\underline{x}\prec\underline{y}$ (or $\mathbf{y}$) & $\underline{x}$ occurs in the word $\underline{y}$ (or the sequence $\mathbf{y}$).\\
		$\underline{x}\triangleleft\underline{y}$, $\underline{x}\triangleright\underline{y}$ & $\underline{x}$ is a prefix of $\underline{y}$; $\underline{x}$ is a suffix of $\underline{y}$.\\
		$F_i$ & The $i$th Fibonacci number.\\
		$\phi(n)$ & The maximal index of the Fibonacci numbers that are smaller than $n$.\\
		$\underline{u}\pm\underline{1}$ & The word $(u_0\pm1)(u_1\pm1)\dots (u_{|\underline{u}|-1}\pm1)$\\
		$\mathcal{F}_{\mathbf{a}}(n,N)$ & The set of factors of length $n$ (of $\mathbf{a}$) that are composed by alphabets in $\Sigma_{N+1}$.\\
		$\mathcal{F}_{\mathbf{a}}^{(1)}(n,N)$ & The set difference of $\mathcal{F}_{\mathbf{a}}(n,N)$ and $\mathcal{F}_{\mathbf{a}}(n,N-1)$.\\
		$P_{\mathbf{a}}(n,N)$ & The number of elements in $\mathcal{F}_{\mathbf{a}}(n,N)$.\\
		$P_{\mathbf{a}}^{(1)}(n,N)$ & The first order difference of $P_{\mathbf{a}}(n,N)$ on the variable $N$.\\
		$P_{\mathbf{a}}^{(2)}(n,N)$ & The second order difference of $P_{\mathbf{a}}(n,N)$ on the variable $N$.\\
		\hline
	\end{tabularx}
	\caption{List of notations}
	\label{table:notation}
\end{table}

\section{Infinite Fibonacci Sequence} \label{fibonacci}
Let $\tau$ be the morphism on $\mathbb{N}$ defined in \eqref{eq:tau-def}. Its fixed point $\mathbf{f}=\tau^{\infty}(0)$ is the infinite Fibonacci sequence. In this section, we give the explicit expression of the $N$-factor complexity $P_{\mathbf{f}}(n,N)$ of the infinite Fibonacci sequence $\mathbf{f}$. Recall that $F_i$ is the $i$th Fibonacci number given in \eqref{eq:fib-num}.

Using the definition of $\tau$, we have the following lemma.
\begin{lemma}\label{lem:6-1}
Suppose $p,q\geq 0$ and $q$ is even. Let $\underline{u} = \tau^p(q)$. Then $|\underline{u}| = F_{p+1}$. Further, $u_0=q$, $u_{F_{p+1}-1}=p+q$, and $q+1\leq u_i \leq p+q-1$ for $1\leq i\leq F_{p+1}-2$.
\end{lemma}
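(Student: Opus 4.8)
\emph{Proof proposal.} The plan is to prove all four assertions simultaneously by induction on $p$, with the length claim $|\underline{u}|=F_{p+1}$ serving as the backbone that the other three hang on. I would treat $p=0,1,2$ as base cases, for which $\tau^p(q)$ equals $q$, $q(q+1)$, and $q(q+1)(q+2)$ respectively, and verify the statement directly against $F_1=1$, $F_2=2$, $F_3=3$.

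The engine of the induction is a self-similar decomposition of $\tau^{p+1}(q)$. Since $q$ is even, $\tau(q)=(q)(q+1)$ with $q+1$ odd, and applying $\tau$ once more to the odd letter gives $\tau(q+1)=q+2$. Using that $\tau^p$ respects concatenation, this yields
\[
\tau^{p+1}(q)=\tau^{p}\big(\tau(q)\big)=\tau^{p}(q)\,\tau^{p}(q+1)=\tau^{p}(q)\,\tau^{p-1}(q+2).
\]
Before exploiting this, I would record that $\tau$ commutes with the operation of adding $2$ to every letter: adding $2$ preserves parity, so it carries each defining rule of $\tau$ in \eqref{eq:tau-def} to the rule two letters higher. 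Consequently $\tau^{p-1}(q+2)$ is exactly $\tau^{p-1}(q)$ with every letter increased by $2$, and since $q+2$ is again even, the induction hypothesis applies to it verbatim after this shift.

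With the decomposition in hand, the four claims for $p+1$ follow by reading off the two blocks. For the length, $|\tau^{p+1}(q)|=|\tau^{p}(q)|+|\tau^{p-1}(q+2)|=F_{p+1}+F_{p}=F_{p+2}$ by the induction hypothesis and \eqref{eq:fib-num}. The first letter of $\tau^{p+1}(q)$ is the first letter of $\tau^{p}(q)$, namely $q$; the last letter is the last letter of $\tau^{p-1}(q+2)$, namely $((p-1)+q)+2=(p+1)+q$. For the interior bound $q+1\le u_i\le (p+1)+q-1=p+q$, I would split the interior positions into those lying in the first block $\tau^{p}(q)$ and those in the second block $\tau^{p-1}(q+2)$: in the first block the interior letters lie in $[q+1,p+q-1]$ and the (now interior) junction letter is the old last letter $p+q$; in the second block the first letter is $q+2$ and its interior letters lie in the shifted range $[q+3,p+q]$, while its last letter $p+q+1$ is precisely the excluded global last position.

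The step I expect to be most delicate is the bookkeeping at the seam between the two blocks: one must check that the last letter of $\tau^{p}(q)$ and the first letter of $\tau^{p-1}(q+2)$, which are now interior to the concatenation, both land in $[q+1,p+q]$, and that the only position carrying the value $q$ remains position $0$, so that the strict lower bound $u_i\ge q+1$ holds for all $i\ge 1$. This is exactly where small $p$ must be handled with care: the claim that the first letter $q+2$ of the second block is genuinely interior requires $|\tau^{p-1}(q+2)|=F_p\ge 2$, i.e. $p\ge 2$, which is why $p=0,1,2$ are isolated as base cases. Everything else reduces to combining the induction hypotheses for $\tau^{p}(q)$ and $\tau^{p-1}(q)$ with the additive shift by $2$.
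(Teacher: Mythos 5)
Your proof is correct, and in fact it supplies an argument the paper never writes down: the paper asserts Lemma \ref{lem:6-1} with only the remark ``using the definition of $\tau$'' and gives no proof at all. Your two-step induction is sound — the base cases $p=0,1,2$ check out, the decomposition $\tau^{p+1}(q)=\tau^{p}(q)\,\tau^{p-1}(q+2)$ (valid since $\tau(q+1)=q+2$ for odd $q+1$) is exactly the identity the paper itself relies on later in the proof of Lemma \ref{lem:6}, and your bookkeeping at the seam is right: the old last letter $p+q$ of the first block and the first letter $q+2$ of the second block both land in $[q+1,p+q]$ once $p\geq 2$, which is precisely why isolating $p\leq 2$ as base cases is needed. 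Your observation that $\tau$ commutes with the shift by $2$ is harmless but not actually necessary, since the induction hypothesis is quantified over all even $q$ and so applies directly to the pair $(p-1,q+2)$.
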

The next result shows that all factors of $\mathbf{f}$ in $\mathcal{F}_{\mathbf{f}}(n,N)$ can be found in the prefix (of $\mathbf{f}$) of length $F_{N+2}$, which reduces our work in calculating $P_{\mathbf{f}}(n,N)$.
\begin{lemma}\label{lem:6}
	If $N\geq1$ and $n\geq1$, then $\mathcal{F}_{\mathbf{f}}(n,N)=\mathcal{F}_{\tau^{N+1}(0)}(n,N)$.
\end{lemma}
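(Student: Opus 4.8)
Write $g_k:=\tau^k(0)$, so that the assertion reads $\mathcal{F}_{\mathbf{f}}(n,N)=\mathcal{F}_{g_{N+1}}(n,N)$. One inclusion is immediate: since $\mathbf{f}=\tau^{N+1}(\mathbf{f})$ and $\mathbf{f}$ begins with $0$, the word $g_{N+1}=\tau^{N+1}(0)$ is a prefix of $\mathbf{f}$, whence every factor of $g_{N+1}$ is a factor of $\mathbf{f}$ and $\mathcal{F}_{g_{N+1}}(n,N)\subseteq\mathcal{F}_{\mathbf{f}}(n,N)$. The content is the reverse inclusion, and the plan is to derive it from a self-similar decomposition of $\mathbf{f}$. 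For a word $\underline{u}$, let $\underline{u}+\underline{2}$ denote the word obtained by adding $2$ to each letter. From \eqref{eq:tau-def} one checks the commutation $\tau(\underline{u}+\underline{2})=\tau(\underline{u})+\underline{2}$; combined with $\tau(0)=01$ and $\tau(1)=2=0+\underline{2}$, this yields $g_{k+1}=g_k\,\tau^k(1)=g_k\,(g_{k-1}+\underline{2})$. Iterating this recursion upward from level $N+1$ produces the decomposition $\mathbf{f}=g_{N+1}\prod_{m\ge N}(g_m+\underline{2})$.

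Next I would read off, via Lemma \ref{lem:6-1}, the final letter of every block in this decomposition. The last letter of $g_{N+1}=\tau^{N+1}(0)$ is $N+1$, and the last letter of each block $g_m+\underline{2}$ is $m+2\ge N+2$; thus \emph{every} block ends in a letter strictly larger than $N$. Consequently no factor of $\mathbf{f}$ all of whose letters are $\le N$ can contain a block boundary, so each such factor must lie entirely inside a single block. If it lies in the first block $g_{N+1}$ it is already an element of $\mathcal{F}_{g_{N+1}}(n,N)$, and nothing more is needed.

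It remains to treat a factor $\underline{x}$ with all letters $\le N$ that sits inside a later block $g_m+\underline{2}$ with $m\ge N$. Then $\underline{x}-\underline{2}$ is a factor of $g_m$ with all letters $\le N-2$, and since $g_m$ is a prefix of $\mathbf{f}$ we get $\underline{x}-\underline{2}\in\mathcal{F}_{\mathbf{f}}(n,N-2)$. I would run a descending induction on $N$ in steps of $2$: the induction hypothesis $\mathcal{F}_{\mathbf{f}}(n,N-2)=\mathcal{F}_{g_{N-1}}(n,N-2)$ gives $\underline{x}-\underline{2}\prec g_{N-1}$, hence $\underline{x}\prec g_{N-1}+\underline{2}$. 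Finally the case $k=N$ of the recursion, $g_{N+1}=g_N(g_{N-1}+\underline{2})$, shows that $g_{N-1}+\underline{2}$ is a suffix of $g_{N+1}$, so $\underline{x}\prec g_{N+1}$, as required. The base cases $N=1$ and $N=2$ I would check directly, using that the letters $0,1$ occur in $\mathbf{f}$ only in its prefix $01$ and the letters $\le 2$ only in the prefix $0122$, so the relevant low-letter factors already appear in $g_2=012$, respectively $g_3=01223$.

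The main obstacle is producing the decomposition $\mathbf{f}=g_{N+1}\prod_{m\ge N}(g_m+\underline{2})$ together with the suffix identity $g_{N-1}+\underline{2}\triangleright g_{N+1}$: this is precisely what converts a remote occurrence of a low-letter factor into an occurrence inside the target prefix $g_{N+1}$. The supporting combinatorial point — that a factor with all letters $\le N$ cannot straddle a block boundary because every block ends in a letter $>N$ — is what legitimizes the block-by-block reduction, and it rests entirely on the last-letter information furnished by Lemma \ref{lem:6-1}.
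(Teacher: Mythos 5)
Your proof is correct in its overall architecture, and it takes a genuinely different route from the paper's. The paper fixes $N$ and inducts on the length of the prefix: it proves $\mathcal{F}_{\tau^{N+1}(0)}(n,N)=\mathcal{F}_{\tau^{N+i}(0)}(n,N)$ for all $i\geq 2$ by writing $\tau^{N+2}(0)=\tau^{N+1}(0)\tau^{N-1}(2)\tau^{N-3}(4)\cdots$ and absorbing each appended block into the existing prefix through the suffix relation $\tau^{p-1}(q+2)\triangleright\tau^{p+1}(q)$, then bootstrapping from exponent $i$ to $i+1$. You instead decompose the whole sequence at once, $\mathbf{f}=\tau^{N+1}(0)\prod_{m\geq N}\bigl(\tau^m(0)+\underline{2}\bigr)$, and run an induction on $N$ in steps of $2$, using the commutation $\tau(\underline{u}+\underline{2})=\tau(\underline{u})+\underline{2}$ to convert a low-letter factor sitting in a far block into an element of $\mathcal{F}_{\mathbf{f}}(n,N-2)$. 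Both arguments rest on the same two ingredients --- the last-letter information of Lemma \ref{lem:6-1} (so that no factor with letters $\leq N$ can straddle a block boundary) and the relation $\tau^{k+1}(0)=\tau^{k}(0)\,\tau^{k-1}(2)$ --- but your letter-shifting induction on $N$ replaces the paper's two-layer argument (the peeling identity \eqref{eq: lem6_2} inside an induction on $i$), at the cost of requiring base cases in $N$.

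That is where the one real error sits. Your justification of the base case $N=2$, namely that ``the letters $\leq 2$ occur in $\mathbf{f}$ only in the prefix $0122$'', is false: the letter $2$ occurs infinitely often in $\mathbf{f}$ (it is the first letter of every block $\tau^m(0)+\underline{2}$; concretely $f(5)=f(8)=2$). The base case itself is true and can be repaired with exactly the machinery you already set up: for $N=2$ the decomposition and the boundary argument still apply, and a factor $\underline{x}$ with all letters $\leq 2$ lying inside a later block $\tau^m(0)+\underline{2}$ pulls back to the factor $\underline{x}-\underline{2}$ of $\tau^m(0)$ whose letters are all $\leq 0$, hence all equal to $0$; by Lemma \ref{lem:6-1} the letter $0$ occurs in $\tau^m(0)$ only at its first position, so $\underline{x}$ can only be the single letter $2$, which already occurs in $\tau^{3}(0)=01223$. (Your $N=1$ base case is fine as stated, since all letters beyond position $1$ are indeed $\geq 2$.) With that repair the induction closes and your proof is complete.
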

\begin{proof}
	It is enough to show that $\mathcal{F}_{\tau^{N+1}(0)}(n,N) = \mathcal{F}_{\tau^{N+i}(0)}(n,N)$ for all $i\geq 2$. We prove by induction on $i$. Let $q\ge 0$ be an even number. Since $\tau^{p+1}(q)=\tau^{p}(q)\,\tau^{p-1}(q+2)$, then 
	\begin{equation}\label{eq: lem6_1}
		\mathcal{F}_{\tau^{p-1}(q+2)}(n,p+q) \subset \mathcal{F}_{\tau^{p+1}(q)}(n,p+q).
	\end{equation}
	By Lemma \ref{lem:6-1}, we know $p+q+1\triangleright \tau^{p+1}(q)$, which implies \[\mathcal{F}_{\tau^{p+1}(q)\tau^{p-1}(q+2)}(n,p+q)=\mathcal{F}_{\tau^{p+1}(q)}(n,p+q) \cup \mathcal{F}_{\tau^{p-1}(q+2)}(n,p+q).\]
	Combining with \eqref{eq: lem6_1}, we have
	\[\mathcal{F}_{\tau^{p+1}(q)\tau^{p-1}(q+2)}(n,p+q)=\mathcal{F}_{\tau^{p+1}(q)}(n,p+q).\]
	Repeating the above procedure, it follows that
	\begin{equation}\label{eq: lem6_2}
		\mathcal{F}_{\tau^{p+1}(q)}(n,p+q) = \mathcal{F}_{\tau^{p+1}(q)\tau^{p-1}(q+2)\cdots\tau^{p-2k+1}(q+2k)}(n,p+q)
	\end{equation}
	for all $0\leq k\leq \lfloor p/2\rfloor$. Note that
	\[\tau^{N+2}(0) = \begin{cases}
		\tau^{N+1}(0)\tau^{N-1}(2)\tau^{N-3}(4)\cdots \tau^3(N-2)\tau^1(N)\tau^0(N+2), & \text{if } N \text{ is even,} \\
		\tau^{N+1}(0)\tau^{N-1}(2)\tau^{N-3}(4)\cdots \tau^2(N-1)\tau^1(N+1), & \text{if } N \text{ is odd.}
	\end{cases}
	\]
	Since $\tau^0(N+2)=\tau^1(N+1)=N+2$, the equation \eqref{eq: lem6_2} yields that
	\begin{equation}\label{eq: lem6_3}
		\mathcal{F}_{\tau^{N+1}(0)}(n,N) = \mathcal{F}_{\tau^{N+2}(0)}(n,N).
	\end{equation}

	Now suppose $\mathcal{F}_{\tau^{N+1}(0)}(n,N) = \mathcal{F}_{\tau^{N+k}(0)}(n,N)$ for some 
	$k\geq 2$. It follows from \eqref{eq: lem6_3} that $\mathcal{F}_{\tau^{N+k}(0)}(n,N+k-1) = \mathcal{F}_{\tau^{N+k+1}(0)}(n,N+k-1)$. Since $N+k-1>N$, by the definition of $\mathcal{F}_{\mathbf{f}}(n,N)$, we have \[\mathcal{F}_{\tau^{N+k}(0)}(n,N) = \mathcal{F}_{\tau^{N+k+1}(0)}(n,N).\]
	As a result, $\mathcal{F}_{\mathbf{f}}(n,N)=\mathcal{F}_{\tau^{N+1}(0)}(n,N)$.
\end{proof}

Recall that $\phi(n)= \max\{k: F_k < n\}$. In Proposition \ref{prop:11}, we determine $P_{\mathbf{f}}(1,N)$ for all $N\geq 0$ and $P_{\mathbf{f}}(n,N)$ for $N<\phi(n)$ when $n\ge 2$. The explicit expressions of $P_{\mathbf{f}}^{(1)}(n,N)$ for all $N\geq \phi(n)$ are given in Proposition \ref{prop:12}.

\begin{proposition}\label{prop:11}
	If $n=1$, then $P_{\mathbf{f}}(1,N) = N+1$ for all $N\geq 0$.
	If $n\geq 2$, we have
	\[P_{\mathbf{f}}(n,N) = \begin{cases}
		0, &\text{if }0\leq N\leq \phi(n)-2,\\
		F_{N+2}-n, &\text{if }N= \phi(n)-1.\\
	\end{cases}\]
\end{proposition}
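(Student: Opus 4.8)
The plan is to treat the length-one case by hand and, for $n\geq 2$, to use Lemma~\ref{lem:6} to push every admissible factor into a single prefix of $\mathbf{f}$, after which the count becomes almost combinatorial. For $n=1$, Lemma~\ref{lem:6-1} (with $q=0$) shows that $\tau^{p}(0)$ ends in the letter $p$, so every nonnegative integer occurs in $\mathbf{f}$; hence $\mathcal{F}_{\mathbf{f}}(1,N)=\{0,1,\dots,N\}$ and $P_{\mathbf{f}}(1,N)=N+1$. For $n\geq 2$, Lemma~\ref{lem:6} gives $\mathcal{F}_{\mathbf{f}}(n,N)=\mathcal{F}_{\tau^{N+1}(0)}(n,N)$, and $\tau^{N+1}(0)$ has length $F_{N+2}$. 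Applying Lemma~\ref{lem:6-1} with $p=N+1$, $q=0$, the only letter of $\tau^{N+1}(0)$ exceeding $N$ is the last one, equal to $N+1$. Consequently every factor counted by $P_{\mathbf{f}}(n,N)$ must avoid that final position and therefore lies inside the prefix $\tau^{N+1}(0)[0,F_{N+2}-2]$ of length $F_{N+2}-1$.

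This localization settles the two ranges as follows. When $0\le N\le\phi(n)-2$, we have $N+2\le\phi(n)$, so $F_{N+2}-1\le F_{\phi(n)}-1<n$; no factor of length $n$ fits in a prefix of length $F_{N+2}-1$, whence $P_{\mathbf{f}}(n,N)=0$. When $N=\phi(n)-1$, the admissible factors are exactly the windows $\mathbf{f}[i,i+n-1]$ whose indices satisfy $i+n-1\le F_{N+2}-2$, i.e. $0\le i\le F_{N+2}-1-n$; there are exactly $F_{N+2}-n$ such windows (this is nonnegative since $n\le F_{\phi(n)+1}=F_{N+2}$). Thus $P_{\mathbf{f}}(n,N)\le F_{N+2}-n$, with equality precisely when these windows are pairwise distinct, which is what remains to prove.

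The distinctness is the crux. Writing $W_\ell:=\tau^{\ell}(0)$, I would first record the self-similar identity
\[W_\ell=W_{\ell-1}\,(W_{\ell-2}+\underline{2})\qquad(\ell\ge 2),\]
which follows from the relation $\tau^{p+1}(q)=\tau^{p}(q)\,\tau^{p-1}(q+2)$ used in Lemma~\ref{lem:6} together with the fact that $\tau$ commutes with adding $2$ to every letter (here $\underline{v}+\underline{2}$ denotes the word obtained from $\underline{v}$ by adding $2$ to each letter). Reading off the second block yields the value recursion $\mathbf{f}(p)=\mathbf{f}(p-F_\ell)+2$ whenever $F_\ell\le p<F_{\ell+1}$; in particular $\mathbf{f}(p)\ge 2$ for $p\ge 2$ and $\mathbf{f}(1)=1$, so the letter $0$ occurs in $\mathbf{f}$ only at position $0$. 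Now set $m=\phi(n)$ and suppose, for contradiction, that $\mathbf{f}[i,i+n-1]=\mathbf{f}[j,j+n-1]$ with $0\le i<j\le F_{m+1}-1-n$. Since $n>F_m$, one checks $i<j<F_{m-1}$, so both windows start in $[0,F_{m-1})$ yet, again because $n>F_m$, each reaches into the shell $[F_m,F_{m+1})$. Restricting the equality to those offsets $t$ for which both $i+t$ and $j+t$ lie in this shell (the binding constraint being $t\ge F_m-i$, while $j+t\le j+n-1\le F_{m+1}-2<F_{m+1}$ keeps us inside the shell), and cancelling the common $+2$ via the recursion, the identity collapses to $\mathbf{f}(s)=\mathbf{f}(s+(j-i))$ for $s=0,1,\dots$. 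Taking $s=0$ gives $\mathbf{f}(j-i)=\mathbf{f}(0)=0$, forcing $j-i=0$ by the uniqueness of $0$, a contradiction. Hence the windows are distinct and $P_{\mathbf{f}}(n,N)=F_{N+2}-n$.

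The only genuinely delicate step is this last one: the reduction to the prefix and the counting are routine bookkeeping with Lemmas~\ref{lem:6-1} and~\ref{lem:6}, whereas the distinctness rests on the $+\underline{2}$ self-similarity peculiar to the infinite Fibonacci sequence and on the uniqueness of the letter $0$. In writing it up I would verify carefully the boundary conditions of the shell argument (that the chosen offsets keep $j+t$ strictly below $F_{m+1}$, and that $s=0$ is actually attained at $t=F_m-i$), and note that the self-similar identity requires $m=\phi(n)\ge 2$, i.e. $n\ge 3$; the cases $n\le 2$ are covered because there the count $F_{N+2}-n$ already vanishes and no distinctness argument is needed.
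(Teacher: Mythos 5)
Your proposal is correct, and at the crux --- the pairwise distinctness of the $F_{N+2}-n$ windows when $N=\phi(n)-1$ --- it takes a genuinely different route from the paper. The paper stays inside the decomposition $\tau^{N+1}(0)=\tau^{N}(0)\,\tau^{N-1}(2)$: by Lemma \ref{lem:6-1} every letter of $\mathbf{f}$ strictly before position $F_{N+1}-1$ is at most $N-1$, so each admissible window contains the letter $N$ at position $F_{N+1}-1$ as its \emph{first} occurrence of $N$; since this occurrence sits at a different offset in each window, the windows are pairwise distinct. You instead establish the self-similar identity $\tau^{\ell}(0)=\tau^{\ell-1}(0)\,\bigl(\tau^{\ell-2}(0)+\underline{2}\bigr)$ (valid since $\tau$ commutes with adding $2$ to every letter), read off the value recursion $\mathbf{f}(p)=\mathbf{f}(p-F_{\ell})+2$ on the shell $F_{\ell}\leq p<F_{\ell+1}$ for $\ell\geq 2$, and unfold a hypothetical coincidence of two windows through this recursion down to $\mathbf{f}(j-i)=\mathbf{f}(0)=0$, contradicting the uniqueness of the letter $0$. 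Both arguments are sound, and your boundary checks (the shell constraint $t\geq F_m-i$, attainability of $s=0$, and the restriction to $m=\phi(n)\geq 2$, i.e.\ $n\geq 3$) are all verified correctly. The paper's argument is shorter because the distinguishing feature is read off directly from Lemma \ref{lem:6-1}; yours costs the extra commutation/recursion step but buys a reusable renormalization fact about $\mathbf{f}$ and makes the role of the unique letter $0$ explicit (a fact the paper simply asserts as $|\mathbf{f}|_0=1$, and which you actually prove).

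One small point to patch in a final write-up: Lemma \ref{lem:6} is stated only for $N\geq 1$, so your localization step does not literally apply when $N=0$, which occurs both at the bottom of the first range (for $n\geq 3$) and in the second range when $n=2$. This costs nothing: since you have already shown that the letter $0$ occurs in $\mathbf{f}$ only at position $0$, a factor of length $n\geq 2$ over $\Sigma_1$ would require two consecutive zeros, so $P_{\mathbf{f}}(n,0)=0$ directly --- exactly how the paper disposes of $P_{\mathbf{f}}(2,0)$. (The paper itself is equally loose about $N=0$ in the first range, so this is a shared, easily repaired wrinkle rather than a defect of your approach.)
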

\begin{proof}
	For every $N\geq 0$, we have $N\prec \tau^N(0)\prec \mathbf{f}$. So 
	\[P_{\mathbf{f}}(1,N) = N+1.\]
	Suppose $n\geq 2$. If $N\leq \phi(n)-2$, then $n>F_{N+2}=|\tau^{N+1}(0)|$. It follows from Lemma \ref{lem:6} that 
	\[P_{\mathbf{f}}(n,N) = 0.\]
	In the case $N= \phi(n)-1$, we see that $F_{N+1} < n\leq F_{N+2}$. Note that $N=0$ implies $n=2$. Since $|\mathbf{f}|_0=1$, we have $P_{\mathbf{f}}(2,0) = 0=F_{2}-2$. 
	If $N\geq 1$, then \[\tau^{N+1}(0)=\tau^{N}(0)\,\tau^{N-1}(2),\]
	where $|\tau^{N}(0)|=F_{N+1}$ and $|\tau^{N-1}(2)|=F_{N}$.
	Combining with Lemma \ref{lem:6-1}, we see that \[\tau^{N+1}(0)[i-n+1,i]\in \mathcal{F}_{\tau^{N+1}(0)}(n,N)\]
	whenever $n-1\le i<F_{N+2}-1$. Moreover, since $F_{N+1} < n\leq F_{N+2}$, the $(F_{N+1}-1)$-th term of $\tau^{N+1}(0)$ (which is the first letter `$N$' in the sequence $\mathbf{f}$) must be contained in the  subword $\tau^{N+1}(0)[i-n+1,i]$. It follows from Lemma \ref{lem:6-1} that $\tau^{N+1}(0)[0,n-1]$, $\dots$, $\tau^{N+1}(0)[F_{N+2}-n-1,F_{N+2}-2]$ are all different. Hence, by Lemma \ref{lem:6},
	\[P_{\mathbf{f}}(n,N)=P_{\tau^{N+1}(0)}(n,N)=F_{N+2}-n.\qedhere\]
\end{proof}

\begin{proposition}\label{prop:12}
	For all $n\geq 2$,
	\[P^{(1)}_{\mathbf{f}}(n,N) = \begin{cases}
		F_N, &\text{if }N=\phi(n),\\
		F_{\phi(n)+1} + (n-1)(N-\phi(n)-1)/2, &\text{if } N\geq \phi(n)+1 \text{ and }N+\phi(n) \text{ is odd,}\\
		F_{\phi(n)} + (n-1)(N-\phi(n))/2, &\text{if } N\geq \phi(n)+1 \text{ and } N+\phi(n) \text{ is even.}\\
	\end{cases}\]
\end{proposition}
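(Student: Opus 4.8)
The plan is to read $P^{(1)}_{\mathbf f}(n,N)=\sharp\mathcal F^{(1)}_{\mathbf f}(n,N)$ as the number of length-$n$ factors of $\mathbf f$ whose largest letter is exactly $N$, and to prove a step-two recursion in $N$. Write $B_p:=\tau^p(0)$ and let $\underline v+\underline 2$ denote the word obtained from $\underline v$ by adding $2$ to every letter. Two structural facts drive everything. First, since $\tau$ shifts indices uniformly we have $\tau^p(q+2)=\tau^p(q)+\underline 2$ for even $q$; as every factor satisfies $\underline v\prec B_p$ for some $p$ and $B_p+\underline 2=\tau^p(2)\prec\mathbf f$, it follows that $\underline v\prec\mathbf f$ implies $\underline v+\underline 2\prec\mathbf f$. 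Hence the shift $\Phi(\underline v)=\underline v+\underline 2$ injects the level-$N$ factors (those with maximal letter $N$) into the level-$(N+2)$ factors, preserving length. Second, the letters $0$ and $1$ occur exactly once in $\mathbf f$, at positions $0$ and $1$, because $\tau$ produces a $0$ or a $1$ only in the initial $\tau(0)=01$; in particular no factor $x\,0$ occurs in $\mathbf f$. By Lemma \ref{lem:6} everything counted by $P^{(1)}_{\mathbf f}(n,N)$ appears already in $B_{N+1}$, so I work inside this prefix.

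The core is the recursion $P^{(1)}_{\mathbf f}(n,N+2)=P^{(1)}_{\mathbf f}(n,N)+(n-1)$ for $N\ge\phi(n)$. Put $M=N+2$. From $B_M=B_{M-1}(B_{M-2}+\underline 2)$ and Lemma \ref{lem:6-1}, the first occurrence of $M$ in $\mathbf f$ is the last letter of $B_M$, at position $F_{M+1}-1$, and it is immediately followed by the start of $\tau^{M-1}(2)=B_{M-1}+\underline 2$, namely the letter $2$. Therefore each of the $n-1$ windows of length $n$ that contain this first $M$ strictly to the left of their right end contains the block $M\,2$; subtracting $\underline 2$ would create the factor $(M-2)\,0$, which does not exist. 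These $n-1$ windows thus lie in $\mathcal F^{(1)}_{\mathbf f}(n,M)\setminus\operatorname{Im}\Phi$, and they are pairwise distinct: since $n\le F_{\phi(n)+1}\le F_{M-1}$, Lemma \ref{lem:6-1} forces the letter $M$ to occur in each such window only at the first $M$, whose relative position then distinguishes the words. This already gives $P^{(1)}_{\mathbf f}(n,M)\ge P^{(1)}_{\mathbf f}(n,N)+(n-1)$.

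The matching upper bound is the main obstacle. I must show that every level-$M$ factor other than these $n-1$ windows lies in $\operatorname{Im}\Phi$, i.e. that $\underline u-\underline 2\prec\mathbf f$ for each such $\underline u$. A boundary estimate first shows that for $N\ge\phi(n)$ every level-$M$ factor has minimal letter $\ge 2$ (a factor meeting position $0$ or $1$ has maximal letter at most $\phi(n)+1<M$), so $\underline u-\underline 2$ is a legitimate word. It then remains to prove that the boundary block $M\,2$ at the first $M$ is the \emph{only} obstruction to subtracting $\underline 2$: a window that avoids the first $M$ meets $M$ only at later occurrences, which by the self-similarity $\tau^{M-1}(2i)=\tau^{M-1}(0)+\underline{2i}$ and $\tau^{M-1}(2i+1)=\tau^{M-1}(1)+\underline{2i}$ are shift-ups of lower structure; and a window whose first $M$ is right-flush lies inside the final block $B_{M-2}+\underline 2$ of $B_M$. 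In each case $\underline u-\underline 2$ is recovered as a genuine factor. Turning this into a clean proof — essentially an induction on $M$ through the block decomposition of $\mathbf f$, using the uniqueness of $0,1$ to pin the failure to the first $M$ — is the delicate point of the whole argument.

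Finally I would compute the two seeds $P^{(1)}_{\mathbf f}(n,\phi(n))=F_{\phi(n)}$ and $P^{(1)}_{\mathbf f}(n,\phi(n)+1)=F_{\phi(n)+1}$ by a direct count inside $B_{\phi(n)+1}$ and $B_{\phi(n)+2}$, following the pattern of Proposition \ref{prop:11}: the admissible windows all meet the first occurrence of their maximal letter, and Lemma \ref{lem:6-1} makes windows ending at distinct positions distinct because the relative position of that first occurrence is then different. Substituting these seeds into the step-two recursion yields two arithmetic progressions of common difference $n-1$, indexed by the parity of $N+\phi(n)$; solving them and simplifying with $F_{\phi(n)+2}=F_{\phi(n)+1}+F_{\phi(n)}$ gives precisely the three displayed formulas.
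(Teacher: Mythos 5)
Your route---the step-two recursion $P^{(1)}_{\mathbf{f}}(n,N+2)=P^{(1)}_{\mathbf{f}}(n,N)+(n-1)$ driven by the shift $\Phi(\underline{v})=\underline{v}+\underline{2}$, plus two seeds---is genuinely different from the paper's proof, which counts $\mathcal{F}^{(1)}_{\tau^{N+1}(0)}(n,N)$ in a single pass from the full decomposition $\tau^{N+1}(0)=\tau^{N}(0)\tau^{N-2}(2)\cdots$ and obtains $F_{N-2j_0}+(n-1)j_0$ directly. Your lower-bound half (the $n-1$ windows containing the block $M\,2$, pairwise distinct and outside $\operatorname{Im}\Phi$) is sound. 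But the proposal is not a proof: the matching upper bound is exactly the step you defer as ``the delicate point,'' so the recursion is never established. Ironically, it is not delicate and needs no induction through the block structure: by Lemma \ref{lem:6} every level-$M$ factor is a window of $\tau^{M+1}(0)=\tau^{M}(0)\,\tau^{M-1}(2)$; a window is an interval, and the only letter $M$ in $\tau^{M}(0)$ is its last letter (Lemma \ref{lem:6-1}), so a window avoiding position $F_{M+1}-1$ either contains no $M$ at all or lies wholly inside $\tau^{M-1}(2)=\tau^{M-1}(0)+\underline{2}$ and is therefore a shift-up, while the right-flush window lies inside the suffix $\tau^{M-2}(2)$ of $\tau^{M}(0)$ because $n\le F_{M-1}$ whenever $M\ge\phi(n)+2$.

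The second gap is a step that would actually fail: your computation of the seed $P^{(1)}_{\mathbf{f}}(n,\phi(n)+1)=F_{\phi(n)+1}$. For $N=\phi(n)+1$ it is false that every admissible window meets the first occurrence of $N$; that claim is true at $N=\phi(n)$ (because $n>F_{\phi(n)}$ leaves no room for a level-$N$ window inside $\tau^{N-1}(2)$), but it breaks one level up. Concretely, take $n=4$, so $\phi(n)=3$ and $N=4$: in $\mathbf{f}=0\,1\,2\,2\,3\,2\,3\,4\,2\,3\,4\,4\,5\cdots$ the factor $\mathbf{f}[8,11]=2344$ has maximal letter $4$ and avoids the first $4$ (at position $7$), so your count returns only the $n=4$ words $3234$, $2342$, $3423$, $4234$ instead of the correct $F_4=5$. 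What is missing are the shift-ups of $\mathcal{F}(n,\phi(n)-1)$ (here $2344=0122+\underline{2}$), which number $F_{\phi(n)+1}-n$ by Proposition \ref{prop:11}; moreover, at this level there are $n$, not $n-1$, new windows through the first $N$, because the right-flush window ceases to be a shift-up once $n>F_{\phi(n)}$. The correct seed identity is $P^{(1)}_{\mathbf{f}}(n,\phi(n)+1)=(F_{\phi(n)+1}-n)+n$, and the ``pattern of Proposition \ref{prop:11}'' count you propose does not produce it. Until both the upper bound of the recursion and this seed are repaired, the argument does not prove Proposition \ref{prop:12}.
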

\begin{proof}
	If $N=\phi(n)$, noting that $\tau^{N+1}(0)=\tau^{N}(0)\,\tau^{N-1}(2)$, then \[\tau^{N+1}(0)[i-n+1,i]\in \mathcal{F}^{(1)}_{\tau^{N+1}(0)}(n,N)\text{ if and only if }F_{N+1}-1\leq i< F_{N+2}-1.\] So $P^{(1)}_{\mathbf{f}}(n,N)=F_N$. Now suppose $N\geq \phi(n)+1$, and namely $F_{\phi(n)}<n\leq F_{\phi(n)+1}\leq F_N$.

	\begin{table}
		{\tiny\[\renewcommand\arraystretch{2}
		\begin{array}{*{8}{c}c}
			\hline
			\tau^N(0) & \tau^{N-2}(2) & \cdots & \tau^{N-2j_0}(2j_0) & \tau^{N-2j_0-2}(2j_0+2) & \cdots & \tau^3(N-3) & \tau^1(N-1) & \tau^0(N+1)\\
			\hline
			F_{N+1} & F_{N-1} & \cdots & F_{N-2j_0+1} & F_{N-2j_0-1} & \cdots & F_4 & F_2 & F_1\\
			\hline
		\end{array}
		\]}
		\caption{Decomposition of $\tau^{N+1}(0)$ when $N$ is odd.}\label{tab:dec-odd-1}
	\end{table}	

	{\bf Case 1:} $N$ is odd. In this case $\tau^{N+1}(0)$ has the following decomposition 
	\begin{equation}\label{eq:decomposition-odd}
		\tau^{N+1}(0)=\left(\prod_{j=0}^{\lfloor N/2\rfloor}\tau^{N-2j}(2j)\right)\tau^0(N+1)=\tau^N(0)\cdots\tau^{1}(N-1)\tau^0(N+1)
	\end{equation}
	where $\prod$ refers the concatenation between word (see also Table \ref{tab:dec-odd-1}) and $\tau^{0}(N+1)=N+1$. For all $0\leq j\leq \lfloor N/2\rfloor$, we see that $|\tau^{N-2j}(2j)|=F_{N-2j+1}$, $|\tau^{N-2j}(2j)|_{N}=1$ and $N\triangleright\tau^{N-2j}(2j)$. There exists $0\leq j_0\leq \lfloor N/2\rfloor$ such that \[N-2j_0=\begin{cases}
		\phi(n), & \text{if }\phi(n) \text{ is odd};\\
		\phi(n)+1, & \text{if }\phi(n) \text{ is even}.
	\end{cases}\]
	
	For all $F_{N+2}-F_{N-2j_0}-1\leq i\leq F_{N+2}-2$, since $F_{\phi(n)}<n\leq F_{\phi(n)+1}$, we have 
	$N\prec \mathbf{f}[i-n+1, i]$. Remark that $f(F_{N+2}-2)$ is the last term of $\tau^1(N-1)$ in the decomposition \eqref{eq:decomposition-odd}, and $f(F_{N+2}-F_{N-2j_0}-1)$ is the last term of $\tau^{N-2j_0}(2j_0)$ in the decomposition \eqref{eq:decomposition-odd}. Further, the term $f(i-n+1)$ is always in the segment $\tau^{N-2j_0}(2j_0)$ since $n\leq F_{\phi(n)+1}\leq F_{N-2j_0+1}$. So we have found $F_{N-2j_0}$ different factors in $\mathcal{F}^{(1)}_{\tau^{N+1}(0)}(n,N)$.

	For all $i<F_{N+2}-F_{N-2j_0}-1$, since $n\leq F_{\phi(n)+1}$, we have $\big|\mathbf{f}[i-n+1, i]\big|_{N}\leq 1$. If $f(i)=N$, then $\mathbf{f}[i-n+1, i]\triangleright \tau^{N-2j}(2j)$ for some $0\leq j< j_0$. However, \[\tau^{N-2j}(2j)=\tau^{N-2j-1}(2j)\tau^{N-2j-2}(2j+2)\cdots\tau^{N-2j_0}(2j_0).\] Thus, in fact, if $f(i)=N$, then $\mathbf{f}[i-n+1, i]\triangleright \tau^{N-2j_0}(2j_0)$ and \[\mathbf{f}[i-n+1, i]=\mathbf{f}[F_{N+2}-F_{N-2j_0}-n, F_{N+2}-F_{N-2j_0}-1].\] 
	If $f(i-k)=N$ where $1\leq k\leq n-1$, then $f(i-k+1)\triangleleft\tau^{N-2j}(2j)$ for some $1\leq j\leq j_0$. Hence, $f(i-k+1)$ can take $j_0$ different values, namely, $2,4,\dots,2j_0$. As $i$ and $k$ vary, we have $(n-1)j_0$ new factors in $\mathcal{F}^{(1)}_{\tau^{N+1}(0)}(n,N)$ when $i<F_{N+2}-F_{N-2j_0}-1$. In summary,
	\[P^{(1)}_{\mathbf{f}}(n,N) = F_{N-2j_0} + (n-1)j_0.\]

	{\bf Case 2:} $N$ is even. In this case $\tau^{N+1}(0)$ has the following decomposition 
	\begin{equation*}
		\tau^{N+1}(0)=\left(\prod_{j=0}^{N/2}\tau^{N-2j}(2j)\right)\tau^0(N+1)=\tau^N(0)\cdots\tau^{0}(N)\tau^0(N+1).
	\end{equation*}			
	There exists $0\leq j_1\leq N/2$ such that \[N-2j_1=\begin{cases}
		\phi(n), & \text{if }\phi(n) \text{ is even};\\
		\phi(n)+1, & \text{if }\phi(n) \text{ is odd}.
	\end{cases}\] A similar discussion as in Case 1 yields that $P^{(1)}_{\mathbf{f}}(n,N) = F_{N-2j_1} + (n-1)j_1$.
\end{proof}

From Proposition \ref{prop:12}, we obtain the second order difference of the $N$-factor complexity $P_{\mathbf{f}}(n,N)$.
\begin{corollary}
	If $n\ge 3$ and $N\geq \phi(n)+1$, then the following holds:
	\[P^{(2)}_{\mathbf{f}}(n,N) = 
	\begin{cases}
		(n-1)-F_{\phi(n)-1}, &\text{if }N+\phi(n) \text{ is even,}\\
		F_{\phi(n)-1}, &\text{if }N+\phi(n) \text{ is odd.}\\
	\end{cases}\]
\end{corollary}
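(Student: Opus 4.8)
The plan is to obtain $P^{(2)}_{\mathbf{f}}(n,N)$ by directly differencing the closed forms for $P^{(1)}_{\mathbf{f}}(n,N)$ supplied by Proposition \ref{prop:12}. The key observation is that the formula for $P^{(1)}_{\mathbf{f}}(n,N)$ splits according to the parity of $N+\phi(n)$, and computing the second difference $P^{(1)}_{\mathbf{f}}(n,N)-P^{(1)}_{\mathbf{f}}(n,N-1)$ requires the value at $N$ and the value at $N-1$, which lie in \emph{opposite} parity classes. So I would fix the parity of $N+\phi(n)$ and carefully pair the correct branch of the formula at $N$ with the correct branch at $N-1$.

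Concretely, first I would treat the case where $N+\phi(n)$ is even (so $N-1+\phi(n)$ is odd) and $N\geq\phi(n)+2$. Then Proposition \ref{prop:12} gives $P^{(1)}_{\mathbf{f}}(n,N)=F_{\phi(n)}+(n-1)(N-\phi(n))/2$ and $P^{(1)}_{\mathbf{f}}(n,N-1)=F_{\phi(n)+1}+(n-1)(N-1-\phi(n)-1)/2$. Subtracting, the linear-in-$N$ terms combine to $(n-1)\cdot 1 = (n-1)$ after accounting for the shift in the index, while the constant terms contribute $F_{\phi(n)}-F_{\phi(n)+1}=-F_{\phi(n)-1}$ by the Fibonacci recurrence \eqref{eq:fib-num}. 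This yields $P^{(2)}_{\mathbf{f}}(n,N)=(n-1)-F_{\phi(n)-1}$. Symmetrically, in the case $N+\phi(n)$ odd (so $N-1+\phi(n)$ even), I would subtract $P^{(1)}_{\mathbf{f}}(n,N-1)=F_{\phi(n)}+(n-1)(N-1-\phi(n))/2$ from $P^{(1)}_{\mathbf{f}}(n,N)=F_{\phi(n)+1}+(n-1)(N-\phi(n)-1)/2$; here the half-integer coefficients cancel so that the $(n-1)$ term drops out, leaving exactly $F_{\phi(n)+1}-F_{\phi(n)}=F_{\phi(n)-1}$.

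The one genuinely delicate point is the boundary: $P^{(2)}_{\mathbf{f}}(n,N)$ is defined for $N\geq 3$, and the clean differencing above presumes \emph{both} $P^{(1)}_{\mathbf{f}}(n,N)$ and $P^{(1)}_{\mathbf{f}}(n,N-1)$ fall under the $N\geq\phi(n)+1$ branches of Proposition \ref{prop:12}. When $N=\phi(n)+1$, the term $P^{(1)}_{\mathbf{f}}(n,N-1)=P^{(1)}_{\mathbf{f}}(n,\phi(n))$ uses instead the special value $F_{\phi(n)}$ from the first branch, so this smallest case must be verified separately to confirm it is consistent with the stated formula. I would check that the first-branch value $F_{\phi(n)}$ agrees with what the general second-branch expression would predict at $N-1=\phi(n)$, namely that $F_{\phi(n)}=F_{\phi(n)}+(n-1)\cdot 0/2$, which it does; hence no exceptional case arises and the hypothesis $n\geq 3$ (ensuring $\phi(n)\geq 1$ so that $F_{\phi(n)-1}$ is well defined) suffices.

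Beyond this boundary bookkeeping, the proof is a routine application of the Fibonacci recurrence to the explicit formulas, so I expect no substantive obstacle; the main care is simply in matching parities correctly and confirming the edge case $N=\phi(n)+1$ does not break the pattern.
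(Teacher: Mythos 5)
Your proposal is correct and matches the paper's approach: the paper obtains this corollary directly by differencing the closed forms of $P^{(1)}_{\mathbf{f}}(n,N)$ from Proposition \ref{prop:12}, pairing the two parity branches and using the Fibonacci recurrence, exactly as you do. Your extra care at the boundary $N=\phi(n)+1$ (where $P^{(1)}_{\mathbf{f}}(n,\phi(n))=F_{\phi(n)}$ comes from the first branch but happens to agree with the even-parity formula) is the right check and is implicit in the paper's one-line derivation.
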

	
Now we are able to prove our first main result (Theorem \ref{thm: seq_f}).
\begin{proof}[Proof of Theorem \ref{thm: seq_f}]
	The result follows from Proposition \ref{prop:11} and Proposition \ref{prop:12}. 
\end{proof}

\section{\texorpdfstring{$N$}{N}-factor complexity of the digital sequences}\label{sec:digital}
In this section, we study the $N$-factor complexity of the digital sequence $\mathbf{s}_{k,\underline{w}}$ defined in Definition \ref{def:digital-seq}. The main result in this section is the following.
\begin{theorem}\label{thm:1}
	Let $n\geq 1$ be an integer and $\underline{w}\in\Sigma_{k}^*\backslash\{\varepsilon\}$. For all $N\geq\lceil\log_k(n)\rceil+2 $,
	\[P_{\mathbf{s}}^{(2)}(n,N)=\begin{cases}
		n-1, & \text{ if }\underline{w}\in\{0\}^*\cup\{k-1\}^{*},\\
		0, & \text{ if }\underline{w}\notin\{0\}^*\cup\{k-1\}^{*}.
	\end{cases}\]
\end{theorem}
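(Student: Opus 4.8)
The plan is to extract the second difference from the two \emph{uniform shift} operators $\underline u\mapsto \underline u+\underline 1$ and $\underline v\mapsto \underline v-\underline 1$ on length-$n$ words, which interact well with $\mathbf s$: inserting or deleting a single occurrence of $\underline w$ inside the digits that stay \emph{frozen} across a window $[m,m+n-1]$ changes every value $s(m+i)$ by exactly $+1$ (resp.\ $-1$). The first step is a \textbf{shift-up lemma}: for $N\ge M-1$ and any $\underline u\in\mathcal F^{(1)}_{\mathbf s}(n,N)$ one has $\underline u+\underline 1\in\mathcal F^{(1)}_{\mathbf s}(n,N+1)$. Since $N\ge M=\lceil\log_k(n)\rceil+2$ forces the $N$ occurrences of $\underline w$ that realize the maximum to lie in digits above position $\lceil\log_k(n)\rceil$ — i.e.\ in the part of $(m+i)_k$ that does not vary as $i$ runs over the window — one inserts one further copy of $\underline w$ high up (padded by a separating block that destroys spurious boundary occurrences) and thereby raises all $n$ values simultaneously. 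As $\underline u\mapsto\underline u+\underline 1$ is injective, its image realizes $\mathcal F^{(1)}_{\mathbf s}(n,N-1)$ as a subset of $\mathcal F^{(1)}_{\mathbf s}(n,N)$ consisting exactly of the $\underline v$ with $\underline v-\underline 1\in\mathcal F_{\mathbf s}$, so that
\[
P^{(2)}_{\mathbf s}(n,N)=\sharp\big\{\underline v\in\mathcal F^{(1)}_{\mathbf s}(n,N)\colon \underline v-\underline 1\notin\mathcal F_{\mathbf s}\big\},
\]
the number of \emph{irreducible} factors of maximal letter $N$.

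The second step is to analyse these irreducible factors through the carry structure. Fix a window realizing a factor of maximal letter $N$. Because $n\le k^{\lceil\log_k(n)\rceil}$, it meets at most one multiple of $k^{\lceil\log_k(n)\rceil}$, and I would locate the largest power-boundary $P=ek^{T}$ (with $k\nmid e$) that the window straddles, if any. Just below $P$ there is a trailing run $(k-1)^{T}$ and just above a trailing run $0^{T}$, and the central observation is that such a run carries $\Theta(T)$ occurrences of $\underline w$ precisely when $\underline w\in\{k-1\}^{*}$ (resp.\ $\underline w\in\{0\}^{*}$), and only $O(1)$ occurrences otherwise. Away from any straddled boundary the top digits are frozen across the window, supply a common baseline of occurrences, and one copy of $\underline w$ can be deleted there to perform $\underline v\mapsto\underline v-\underline 1$; hence any factor whose maximum comes from a frozen high part is reducible.

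For $\underline w\notin\{0\}^{*}\cup\{k-1\}^{*}$ this settles the count: no run can lift a value to the large maximum $N$, so for $N\ge M$ every maximal letter is forced from the frozen high part, every factor of $\mathcal F^{(1)}_{\mathbf s}(n,N)$ is reducible, and $P^{(2)}_{\mathbf s}(n,N)=0$. The delicate point is a window straddling a boundary $P=ek^{T}$ with $e>1$, where the frozen high parts on the two sides, $(e-1)_k$ and $(e)_k$, differ by a carry; I would show $\underline v-\underline 1$ is still realizable by reducing on the consecutive pair $e-1,e$ (a shorter instance of the same problem) while keeping $T$ and the window offset fixed, so both baselines drop by one at once. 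This carry-junction surgery is the main obstacle in the non-special case.

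For $\underline w\in\{0\}^{*}\cup\{k-1\}^{*}$ the irreducible factors are exactly the \emph{pure-run boundary} factors: those realized by a window straddling a pure power $P=k^{T}$ with $T$ large, so that the whole maximum $N$ is produced by the run $(k-1)^{T}$ or $0^{T}$ with a trivial high part. Any boundary factor with a nontrivial high part is reducible by decrementing that high part, so only the pure powers survive, and a window of length $n$ straddles $P=k^{T}$ in exactly the internal positions $P=m+r$ with $r\in\{1,\dots,n-1\}$. I would verify that, for $T$ large, each offset $r$ yields one well-defined factor, that the $n-1$ resulting shapes are pairwise distinct, and that shift-down fails for each of them precisely because the run is already maximal, giving $P^{(2)}_{\mathbf s}(n,N)=n-1$. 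The crux here is the bookkeeping that exactly the pure-power configurations — and no higher-$e$ or shifted variants — are irreducible, together with the explicit run arithmetic establishing distinctness of the $n-1$ shapes.
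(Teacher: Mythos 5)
Your overall frame is sound, and it is in fact the paper's frame: your shift-up lemma is Lemma \ref{lem: onlyif} (and Proposition \ref{prop:7} for $(2,1)$), and the identity $P_{\mathbf{s}}^{(2)}(n,N)=\sharp\{\underline{v}\in\mathcal{F}_{\mathbf{s}}^{(1)}(n,N)\colon \underline{v}-\underline{1}\notin\mathcal{F}_{\mathbf{s}}\}$ is a correct consequence of it. But both of your case analyses have genuine gaps, located exactly where the paper does its hard work. In the non-special case, the step you yourself flag as ``the main obstacle'' is the crux, and the proposed carry-junction surgery does not close it: reducing on the pair $(e-1,e)$ is not a shorter instance of the same problem, because to drop all $n$ values by one you must decrement $|(e-1)_k|_{\underline{w}}$ and $|(e)_k|_{\underline{w}}$ simultaneously \emph{while preserving the terminal digits of both}, since occurrences of $\underline{w}$ may straddle the junctions with the runs $(k-1)^T$ and $0^T$; a bare recursion keeps no such promise. (Also, literally ``deleting'' a copy of $\underline{w}$ is not a sound operation: for $\underline{w}=01$, deleting the unique occurrence from $10011$ leaves $101$, and the count does not drop; this is why the paper instead truncates at the leftmost occurrence and alters its first digit, Lemma \ref{prop-1}.) The paper's resolution needs no recursion at all: for $N$ above the threshold, the counting in Lemma \ref{lem: if} (inequalities \eqref{eq:nec-3-1} and \eqref{eq:nec-3-2}) shows that at most one occurrence can involve the carried digit and at most $N-1$ can lie in the short varying tail, so at least one occurrence of $\underline{w}$ is forced into the common prefix $\underline{x}$ of \emph{all} of $(m)_k,\dots,(m+n-1)_k$ — even for straddling windows — and one modification of $\underline{x}$ then decrements every value at once, uniformly across the carry. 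Without this idea your claim $P_{\mathbf{s}}^{(2)}(n,N)=0$ is not established.

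In the special case your classification of irreducible factors is false as stated, and the error threatens the count. Take $\underline{w}=0$, $k\geq 3$, and the window straddling $m+1$ where $(m)_k=21(k-1)^{N}$, so $(m+1)_k=220^{N}$: then $s(m)=0$ and $s(m+1)=N$, so this boundary has a thoroughly nontrivial high part, yet its factor contains the letter $0$ and is therefore irreducible — there is no occurrence in the high part to ``decrement.'' The same happens for every boundary $[\,\underline{x}(b+1)]_k\,k^{q+N-1}$ with $0^q\not\prec\underline{x}\,b$ (the paper's set $\mathcal{N}$); even under the charitable reading that ``nontrivial'' means ``containing an occurrence of $\underline{w}$,'' what survives is all occurrence-free high parts, not only pure powers. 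So windows far from pure powers do produce irreducible factors, and the count $n-1$ is saved only because all of them produce the \emph{same} $n-1$ factors as the pure-power windows — the deduplication proved at the end of Propositions \ref{prop:4_1} and \ref{prop:4_2} (the step showing $\mathcal{G}_i=\mathcal{G}_{i^{\prime}}$), which your proposal replaces by the incorrect claim that these factors are reducible. As written, the argument either wrongly discards irreducible factors or leaves open that $P_{\mathbf{s}}^{(2)}(n,N)>n-1$.
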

The proof of Theorem \ref{thm:1} are separated into the following three cases. We deal with the case $(k,\underline{w})=(2,1)$ in Proposition \ref{prop:6} (see Section \ref{section (2,1)}). The second  case $(k,\underline{w})\ne (2,1)$ and $\underline{w}=0^q$ or $(k-1)^q$ is given in Proposition \ref{prop:4} (see Section \ref{section 0q k-1q}). The last case $(k,\underline{w})\ne (2,1)$ and $\underline{w}\notin \{0\}^*\cup\{k-1\}^*$ is discussed in Proposition \ref{prop:3} (see Section \ref{section other}).

In Section \ref{section duality}, we compare the $N$-factor complexities of $\mathbf{s}_{k,\underline{w}}$ and $\mathbf{s}_{k,\operatorname{conj}(\underline{w})}$ where $\underline{w}\notin \{0\}^*\cup\{k-1\}^*$. Hereafter, for $\underline{u}\in\mathbb{N}^*$, we write
\[\underline{u}\pm\underline{1}:=(u_0\pm 1)(u_1\pm 1)\dots (u_{|\underline{u}|-1}\pm 1).\]
For simplicity, we write $\mathbf{s}:=\mathbf{s}_{k,\underline{w}}$. One can find the proper values of $k$ and $\underline{w}$ from the context.
	
\subsection{The case \texorpdfstring{$(k,\underline{w})=(2,1)$}{(k,w)=(2,1)}.} \label{section (2,1)}
In this case, $\mathbf{s}=\mathbf{s}_{2,1}=(s(m))_{m\geq 0}$. We observer that \[s(m)=0 \iff m=0,\text{ and }s(m)=1 \iff m \text{ is a power of }2.\] Further, for all $N> \log_2(n)$, we have $0\not\prec\underline{u}$ for every $\underline{u}\in\mathcal{F}_{\mathbf{s}}^{(1)}(n,N)$. In fact, if $0\prec\underline{u}$, then $\underline{u}=\mathbf{s}[0,n-1]$ and $s(m)=N$ for some $1\leq m\leq n-1$. However, $s(m)=N$ implies $m\geq 2^N-1$ and $\log_2(n)\geq \log_2(m+1)\geq N$ which is impossible. 

A relation between factors in $\mathcal{F}_{\mathbf{s}}^{(1)}(n,N)$ and $\mathcal{F}_{\mathbf{s}}^{(1)}(n,N+1)$ is given below.
\begin{proposition}\label{prop:7}
	Let $k=2$, $\underline{w}=1$ and $n\geq 1$. Suppose that  $\underline{u}\in\mathbb{N}_{\geq 1}^n$. For all $N\geq \lceil\log_k(n)\rceil+2$, we have $\underline{u}\in\mathcal{F}_{\mathbf{s}}^{(1)}(n,N)$ if and only if $\underline{u}+\underline{1}\in\mathcal{F}_{\mathbf{s}}^{(1)}(n,N+1)$.
\end{proposition}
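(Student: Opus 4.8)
The statement establishes a bijection between factors of $\mathbf{s}$ of length $n$ with entries $\geq 1$ that newly use the letter $N$, and those that newly use the letter $N+1$, realized by the shift $\underline{u}\mapsto\underline{u}+\underline{1}$. The plan is to translate the condition $\underline{u}\in\mathcal{F}_{\mathbf{s}}^{(1)}(n,N)$ into a statement about the starting position $m$ of an occurrence of $\underline{u}$, namely $\underline{u}=\mathbf{s}[m,m+n-1]=s(m)s(m+1)\dots s(m+n-1)$, and then to relate the digit-counting function $s$ at $m$ to its values at a suitably transformed index. Since $s(j)=s_{2,1}(j)$ counts the number of $1$'s in the binary expansion of $j$ (the binary digit sum), the key structural fact I would exploit is that prepending a leading $1$ to the binary expansion increases $s$ by exactly $1$. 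Concretely, if $j$ has binary length $\ell$, then $s(2^\ell+j)=s(j)+1$, and more usefully, I would look for an index map on the window $[m,m+n-1]$ that uniformly adds $1$ to every value $s(m),\dots,s(m+n-1)$ while preserving the combinatorial pattern.

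The main step is to exhibit such an index map. First I would use the hypothesis $N\geq\lceil\log_2(n)\rceil+2$ together with the observation recorded just before the proposition: for any $\underline{u}\in\mathcal{F}_{\mathbf{s}}^{(1)}(n,N)$ with all entries $\geq 1$, the occurrence window $[m,m+n-1]$ cannot contain $0$, so $m\geq 1$, and moreover the whole window lies within a single ``block'' where no binary carry crosses the top digit as $m$ ranges over $m,\dots,m+n-1$. The precise claim I would prove is that there is a power $2^t$ (depending on $m$ and $n$, with $t$ large enough that $2^t>m+n-1$, guaranteed by the lower bound on $N$ controlling the binary length) such that $s(2^t+j)=s(j)+1$ for every $j$ in the window, i.e. $\mathbf{s}[2^t+m,\,2^t+m+n-1]=\underline{u}+\underline{1}$. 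This reduces the ``if and only if'' to checking that (i) $\underline{u}$ occurs at $m$ with $\max s$-value exactly $N$ forces $\underline{u}+\underline{1}$ to occur at $2^t+m$ with max value exactly $N+1$, and (ii) conversely every occurrence of a max-$(N+1)$ factor arises this way, by writing its starting index as $2^t+m$ and subtracting the leading $1$. The ``newly uses letter $N$'' condition ($\underline{u}\in\mathcal{F}_{\mathbf{s}}(n,N)\setminus\mathcal{F}_{\mathbf{s}}(n,N-1)$) transfers cleanly because the map adds $1$ to every entry, so the maximum entry shifts from $N$ to $N+1$ and all entries stay $\geq 2\geq 1$.

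The hard part will be handling the carry behavior at the boundary of the window and making the choice of $t$ uniform across the whole length-$n$ window, rather than just at a single index. The binary digit sum is additive only in the absence of carries, so $s(2^t+j)=s(2^t)+s(j)=1+s(j)$ holds precisely when $2^t$ and $j$ share no binary digit position, i.e. when $2^t>j$; I must verify $2^t>m+n-1$ for a single $t$ valid throughout the window, and that such a $t$ exists compatibly with the requirement that the factor genuinely first attains value $N$ (so that no smaller representative spoils membership in $\mathcal{F}^{(1)}$). This is exactly where the bound $N\geq\lceil\log_2(n)\rceil+2$ enters: it forces any index $m$ realizing a max value $N$ to be large enough that its binary length dominates $\log_2 n$, pinning down the relevant $t$ and preventing the trivial initial factor $\mathbf{s}[0,n-1]$ (which alone contains $0$) from interfering. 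Once the uniform carry-free shift is established, the equivalence and its converse follow by direct substitution, and the bijectivity of $\underline{u}\mapsto\underline{u}+\underline{1}$ is immediate since it is invertible on $\mathbb{N}_{\geq 1}^n$.
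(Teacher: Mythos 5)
Your overall strategy is the same as the paper's: realize the correspondence by adjoining or deleting a leading binary digit. Your forward direction matches the paper exactly — pick $2^{t}>m+n-1$ so that passing from $m$ to $m+2^{t}$ is carry-free on the whole window and every digit sum increases by exactly $1$; note that no ``single block'' claim is needed for this direction at all, and in fact that claim is false there, since entries equal to $1$ are allowed in $\underline{u}$ and the window may well straddle a power of $2$.

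The genuine gap is in your converse. You correctly identify the hard part: to subtract the leading digit uniformly, you need all of $(m)_2,(m+1)_2,\dots,(m+n-1)_2$ to have the same length $\ell$, so that $m'=m-2^{\ell-1}$ decreases every $s$-value by exactly $1$. But your proposed justification — that $N\geq\lceil\log_2(n)\rceil+2$ forces any index realizing the maximal value to be large, thereby ``pinning down'' $t$ — does not work: a window of arbitrarily large indices can still straddle a power of $2$ (for instance $[2^{\ell}-1,\,2^{\ell}+n-2]$ with $\ell$ huge), so largeness of $m$ gives no control on the carry at the top digit. The argument that actually closes this step, and the one the paper uses, is different in kind: if the window straddled $2^{\ell}$, then $2^{\ell}$ itself would lie in the window (the window is contiguous), and $s(2^{\ell})=1$; but in the converse direction every entry of $\underline{u}+\underline{1}$ is at least $2$, precisely because $\underline{u}\in\mathbb{N}_{\geq 1}^{n}$ — a contradiction. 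This is where the hypothesis on the entries is essential, not the lower bound on $N$. With that substitution (entries $\geq 2$ exclude powers of $2$ from the window, hence the binary length is constant across it), your plan becomes the paper's proof; as written, the converse step would fail.
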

\begin{proof}
	`$\Rightarrow$' Suppose that $\underline{u}\in\mathcal{F}_{\mathbf{s}}^{(1)}(n,N)$ and $\underline{u}=\mathbf{s}[m,m+n-1]$ for some $m\geq 0$. Let $m^{\prime}=m+2^{p}$ where $p\geq |(m+n)_k|$. Then for all $0\leq i\leq n-1$, \[s(m^{\prime}+i)=|(m^{\prime}+i)_k|_{1}=1+|(m+i)_k|_{1}=1+s(m+i).\] Therefore, $\underline{u}+\underline{1}=\mathbf{s}[m^{\prime},m^{\prime}+n-1]\in\mathcal{F}_{\mathbf{s}}^{(1)}(n,N+1)$.
	
	`$\Leftarrow$' Suppose that $\underline{u}+\underline{1}=\mathbf{s}[m,m+n-1]\in\mathcal{F}_{\mathbf{s}}^{(1)}(n,N+1)$. We claim that $|(m+i)_k|=|(m)_k|=:\ell$ for all $0\leq i\leq n-1$. Otherwise, there exists a $j\leq n-1$ such that $m+j=10^{\ell}$ which contradicts to the fact that $s(m+j)\geq 2$. Let $m^{\prime}=m-2^{\ell-1}$. We have $s(m^{\prime}+i)=s(m+i)-1$ for all $0\leq i\leq n-1$. Therefore, $\underline{u}=\mathbf{s}[m^{\prime},m^{\prime}+n-1]\in\mathcal{F}_{\mathbf{s}}^{(1)}(n,N)$.
\end{proof}

Recall that in Section \ref{sec:notation} we have defined that 
\begin{align*}
	P_{\mathbf{s}}^{(2)}(n,N) & = P_{\mathbf{s}}^{(1)}(n,N)-P_{\mathbf{s}}^{(1)}(n,N-1)\\
	& = \sharp\mathcal{F}_{\mathbf{s}}^{(1)}(n,N)-\sharp\mathcal{F}_{\mathbf{s}}^{(1)}(n,N-1).
\end{align*}
Since for any $\underline{u}\in\mathcal{F}^{(1)}_{\mathbf{s}}(n,N)$ with $N>\log_2(n)$, we have $0\not\prec\underline{u}$. By Proposition \ref{prop:7}, there is a one-to-one correspondence between elements in $\mathcal{F}^{(1)}_{\mathbf{s}}(n,N-1)$ and elements in $\{\underline{u}\in \mathcal{F}^{(1)}_{\mathbf{s}}(n,N)\colon 1\not\prec\underline{u}\}$. Thus
\begin{equation}\label{eq:g-1}
	P_{\mathbf{s}}^{(2)}(n,N) = \sharp\{\underline{u}\in\mathcal{F}_{\mathbf{s}}^{(1)}(n,N)\colon 1\prec \underline{u}\}=:\sharp\mathcal{G}.
\end{equation}
To estimate $P_{\mathbf{s}}^{(2)}(n,N)$, we only need to find all factors of $\mathbf{s}$ that contain both $1$ and $N$.

\begin{proposition}\label{prop:6}
	Let $k=2$, $\underline{w}=1$ and $n\geq 1$. For all $N\geq \lceil\log_k(n)\rceil+2$, we have \[P_{\mathbf{s}}^{(2)}(n,N)=n-1.\]
\end{proposition}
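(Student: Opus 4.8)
The plan is to compute $\sharp\mathcal{G}$ directly, where by \eqref{eq:g-1} we have $P_{\mathbf{s}}^{(2)}(n,N)=\sharp\mathcal{G}$ and $\mathcal{G}$ consists of the length-$n$ factors of $\mathbf{s}=\mathbf{s}_{2,1}$ in which the letter $N$ occurs while no letter exceeds $N$, and in which the letter $1$ also occurs. Recall that $s(m)=1$ exactly when $m$ is a power of $2$, while $s(m)=N$ forces $m$ to have $N$ binary ones and hence $m\geq 2^N-1$. Thus every element of $\mathcal{G}$ is realized by a window $\mathbf{s}[m,m+n-1]$ containing some power of $2$ (producing the letter $1$) and some integer of binary digit sum $N$ (producing the letter $N$). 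First I would record that, since the window meets a point of digit sum $N$ and has length $n$, all its positions exceed $2^{N-1}$: indeed $N\geq\lceil\log_2 n\rceil+2$ gives $2^{N-1}>n$, and the smallest window position is at least $(2^N-1)-(n-1)=2^N-n>2^{N-1}$. As the powers of $2$ lying above $2^{N-1}$ are $2^N,2^{N+1},\dots$, which are at distance greater than $n-1$ apart, the window contains exactly one power of $2$, say $2^t$ with $t\geq N$.

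Next I would localize the digit-sum-$N$ point relative to $2^t$. Every position of the window is within distance $n-1$ of $2^t$. For $0\leq i\leq n-1$ one has $s(2^t+i)=1+s(i)\leq 1+\lceil\log_2 n\rceil<N$, so no position at or above $2^t$ can carry the letter $N$; the digit-sum-$N$ point must therefore be $2^t-j$ for some $1\leq j\leq n-1$. Since $j-1\leq n-2<2^{N-1}\leq 2^{t-1}$, subtracting $j-1$ from the all-ones block $2^t-1$ only clears bits, giving $s(2^t-j)=t-s(j-1)$. Setting this equal to $N$ yields $s(j-1)=t-N$, which in particular re-confirms $t\geq N$.

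The decisive case split comes next. If $t>N$, then $s(j-1)=t-N\geq 1$ forces $j\geq 2$, so $2^t-1$ lies strictly between the two window points $2^t-j$ and $2^t$ and hence inside the window; but $s(2^t-1)=t>N$, contradicting that no letter exceeds $N$. Therefore $t=N$, whence $s(j-1)=0$, i.e.\ $j=1$, and the only digit-sum-$N$ point available is $2^N-1$. Consequently $\mathcal{G}$ is realized precisely by the windows containing both $2^N-1$ and $2^N$, namely $\mathbf{s}[m,m+n-1]$ with $2^N-n+1\leq m\leq 2^N-1$, of which there are $n-1$. Each such window contains $2^N$ as its unique power of $2$, hence exactly one occurrence of the letter $1$, located at offset $2^N-m$; as $m$ ranges over the admissible values this offset runs through $1,\dots,n-1$, so the $n-1$ windows yield $n-1$ pairwise distinct factors. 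This gives $\sharp\mathcal{G}=n-1$, and the boundary case $n=1$ (where $\mathcal{G}=\emptyset$) agrees with $n-1=0$.

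The main obstacle is the localization step together with the exclusion of $t>N$: one must argue that the single large power of $2$ pins the digit-sum-$N$ point just below it, and that whenever $t>N$ the intervening all-ones integer $2^t-1$ necessarily slips into the window and breaks the ceiling $N$. Once this is established, distinctness of the surviving $n-1$ factors is immediate from the location of the unique letter $1$, and the hypothesis $N\geq\lceil\log_2 n\rceil+2$ is used exactly twice — to guarantee a single power of $2$ per window and to keep the digit sums just above $2^t$ strictly below $N$.
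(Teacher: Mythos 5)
Your proof is correct and takes essentially the same route as the paper: both reduce via \eqref{eq:g-1} to counting factors containing both the letters $1$ and $N$, and both identify these as exactly the $n-1$ windows straddling $2^N-1$ and $2^N$, using the same structural facts $s(2^p-1)=p$ and $s(2^p+i)=1+s(i)<N$. The only difference is organizational: the paper partitions $\mathcal{G}$ into sets $\mathcal{G}_p$ according to the dyadic block of the window's starting index and rules out $p\neq N$ case by case, whereas you localize the unique power of $2$ inside an arbitrary window and exclude $t>N$ by the same observation (that $s(2^t-1)=t>N$ would then lie in the window) that the paper uses for its case $p>N$.
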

\begin{proof}
	When $n=1$, it is easy to see that $P_{\mathbf{s}}(1,N)=N+1$ and $P_{\mathbf{s}}^{(2)}(1,N)=0$. Now we assume that $n\geq 2$. 
	Recall that $s(m)=1$ if and only if $m=2^\ell$ for some $\ell\geq 1$. Let \[\mathcal{G}_p=\{\mathbf{s}[h,h+n-1]\in\mathcal{F}_{\mathbf{s}}^{(1)}(n,N)\colon 2^{p-1}\leq h<2^{p}, 1\prec\mathbf{s}[h,h+n-1]\}.\] Then according to \eqref{eq:g-1}, we have $\mathcal{G}=\cup_{p\geq 1}\mathcal{G}_p$. Observe that for $p\geq 1$, 
	\[\renewcommand\arraystretch{1.25}
	\begin{array}{*{5}{c|}c}
		\hline
		s(2^{p-1}) & s(2^{p-1}+1) & \quad\cdots\quad\, & s(2^p-2) & s(2^p-1) & s(2^p)\\
		\hline
		=1 & \multicolumn{3}{c|}{\in\{2,3,\dots,p-1\}} & =p & =1\\
		\hline
	\end{array}
	\]
	We have $\mathcal{G}_p=\emptyset$ for $1\leq p\leq N-1$. When $p=N$, since $n\leq 2^{N-2}<2^{p-1}$, we have 
	\[\renewcommand\arraystretch{1.25}
	\begin{array}{*{5}{c|}c}
		\hline
		s(2^{N-1}) &\cdots & s(2^N-1) & s(2^N) & \dots & s(2^N+2^{N-1}-1) \\
		\hline
		=1 & \in\{2,3,\dots,N-1\} & =N & =1 & \in\{2,3,\dots,N-1\} & =N\\
		\hline
	\end{array}
	\] Consequently, \[\mathcal{G}_N=\{\mathbf{s}[h,h+n-1]\colon 2^{N}-n+1\leq h\leq 2^{N}-1\}\] and $\sharp\mathcal{G}_N=n-1$. When $p>N$, 
	\[\renewcommand\arraystretch{1.25}
	\begin{array}{*{5}{c|}c}
		\hline
		s(2^{p-1}) & \cdots & s(2^{p-1}+2^{N-1}-1) & \cdots & s(2^p-1) & s(2^p)\\
		\hline
		=1 & \in\{2,3,\dots,N-1\} & =N & \geq 2 & >N & =1\\
		\hline
	\end{array}
	\]
	For all $2^{p-1}\leq j<2^p$, in order that $1\prec\mathbf{s}[h,h+n-1]$, we need $h=2^{p-1}$ or $h+n-1\geq 2^p$. If $h=2^{p-1}$, then it follows from $n\leq k^{N-2}$ that $N\not\prec\mathbf{s}[h,h+n-1]$. If $h+n-1\geq 2^{p}$, then $N<p=s(2^p-1)\prec\mathbf{s}[h,h+n-1]$. We conclude that $\mathcal{G}_p=\emptyset$ for $p>N$. Therefore, 
	$\mathcal{G}=\mathcal{G}_{N}$ and $P_{\mathbf{s}}^{(2)}(n,N)=n-1$.
\end{proof}

\subsection{The case \texorpdfstring{$(k,\underline{w})\neq (2,1)$}{(k,w) does not equal to (2,1)}.}

We first give a relation between factors in $\mathcal{F}_{\mathbf{s}}^{(1)}(n,N)$ and $\mathcal{F}_{\mathbf{s}}^{(1)}(n,N+1)$ as in Proposition \ref{prop:7}.
\begin{proposition}\label{prop:2}
	Suppose $(k,\underline{w}) \ne (2,1)$. Let $n\geq 1$ and $\underline{u}\in\mathbb{N}^n$. Then for all $N\geq \lceil\log_k n\rceil+1$, $\underline{u}\in\mathcal{F}^{(1)}_{\mathbf{s}}(n,N)$ if and only if $\underline{u}+\underline{1}\in\mathcal{F}^{(1)}_{\mathbf{s}}(n,N+1)$.
\end{proposition}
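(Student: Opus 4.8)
The plan is to follow the two-implication scheme of Proposition~\ref{prop:7}, the new feature being that $\underline{w}$ is now a block of length $q:=|\underline{w}|$, so that ``creating'' or ``erasing'' a single occurrence of $\underline{w}$ is no longer the same as flipping one digit. First I would record the consequences of the two hypotheses. If $\underline{u}\in\mathcal{F}^{(1)}_{\mathbf{s}}(n,N)$ is realized at $m$ (so $\mathbf{s}[m,m+n-1]=\underline{u}$), then $s(m+i_0)=N$ for some $i_0$, whence $(m+i_0)_k$ contains $N$ occurrences of $\underline{w}$ and therefore has length at least $N+q-1$. On the other hand $N\geq\lceil\log_k n\rceil+1$ gives $n\leq k^{N-1}$, so the window $\{m,\dots,m+n-1\}$ has width $<k^{N-1}$, which is smaller than the gap between the two consecutive powers of $k$ flanking the long term $m+i_0$. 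Hence the window meets at most one power of $k$, i.e. the base-$k$ lengths of its terms take at most two consecutive values $\ell-1,\ell$. The first step is a \emph{common-length reduction}: I would replace $m$ by a realization of $\underline{u}$ all of whose $n$ terms share a common base-$k$ length, by prepending suitable high-order digits that pad the short terms to length $\ell$ without altering $\mathbf{s}$ on the window. Arranging this without manufacturing spurious occurrences of $\underline{w}$ is immediate when $\underline{w}\notin\{0\}^*\cup\{k-1\}^*$, and requires the case-specific care discussed below when $\underline{w}\in\{0\}^*\cup\{k-1\}^*$.

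Once every term has a common length $\ell$, the digit positions $\ge\ell$ lie strictly above every digit of all $n$ terms and are shared by the whole window, which is exactly what makes a \emph{uniform} edit possible. For the implication `$\Rightarrow$' I would prepend a short guard block $B$ chosen so that $B$ carries exactly one occurrence of $\underline{w}$ while its junction with the common suffix contributes none; since all terms now have the same length, this adds exactly one occurrence of $\underline{w}$ to each of the $n$ terms simultaneously, giving $s(m'+i)=u_i+1$ and hence $\underline{u}+\underline{1}\in\mathcal{F}^{(1)}_{\mathbf{s}}(n,N+1)$. The correct block depends on $\underline{w}$: when $\underline{w}\notin\{0\}^*\cup\{k-1\}^*$ a copy of $\underline{w}$ followed by a single well-chosen nonzero digit already works, whereas when $\underline{w}=0^{q}$ (and symmetrically $\underline{w}=(k-1)^{q}$) one prepends $1\,0^{q}$ and uses that the leading digit of every $(m+i)_k$ is nonzero to guarantee that exactly one block $0^{q}$ is produced.

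For the converse `$\Leftarrow$', a realization of $\underline{u}+\underline{1}$ has all its $s$-values at least $1$, and this positivity is precisely what makes the length estimate rigid. A genuine (two-sided) length change inside the window would require both $(k-1)^{\ell-1}$ and $10^{\ell-1}$ to appear as terms, hence both to contain $\underline{w}$; but $\underline{w}\prec(k-1)^{\ell-1}$ forces $\underline{w}\in\{k-1\}^*$ while $\underline{w}\prec 10^{\ell-1}$ forces $\underline{w}\in\{0\}^*\cup\{10^{*}\}$, and these are simultaneously possible only for $(k,\underline{w})=(2,1)$, which is excluded by hypothesis. Thus no length variation occurs and the common-length reduction is automatic. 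After it, I would locate the topmost occurrence of $\underline{w}$ in the shared high part and erase it by a single fixed edit of those shared positions (subtracting an appropriate multiple of a power of $k$), lowering every $s$-value by exactly one and producing a realization of $\underline{u}$.

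I expect the main obstacle to be the \emph{exact-by-one} bookkeeping, together with its entanglement with the length-equalization, in the run-block cases $\underline{w}\in\{0\}^*\cup\{k-1\}^*$. For such $\underline{w}$ the occurrences chain and overlap, so a stray padding zero or a misplaced guard digit can change a count by two rather than one, and the behaviour of the leading digits at a power-of-$k$ boundary must be tracked by hand. This is exactly the dichotomy $\underline{w}\in\{0\}^*\cup\{k-1\}^*$ versus $\underline{w}\notin\{0\}^*\cup\{k-1\}^*$ around which I would organize the verification: in the non-run-block case the nonzero guard digit disentangles every junction, while in the run-block case the count has to be checked directly from the digit strings near the relevant power of $k$. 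Since $(k,\underline{w})=(2,1)$ is the unique pair where this control fails (every digit being significant for $\underline{w}$), its exclusion is what allows the argument to go through uniformly for all remaining $(k,\underline{w})$.
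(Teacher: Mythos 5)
Your overall skeleton for Proposition \ref{prop:2} (guard blocks for `$\Rightarrow$', length rigidity plus erasing a prefix occurrence for `$\Leftarrow$') points in the right direction, but both implications contain genuine gaps. For `$\Rightarrow$', the common-length reduction on which everything rests is false, not merely delicate. Take $(k,\underline{w})=(2,0)$, which is in scope since $(2,0)\neq(2,1)$. Then $s(m)=0$ if and only if $m=2^r-1$, so the factor $\underline{u}=0\,N\in\mathcal{F}^{(1)}_{\mathbf{s}}(2,N)$ has the \emph{unique} realization $\{2^N-1,\,2^N\}$, whose two terms have base-$2$ lengths $N$ and $N+1$: no realization of $\underline{u}$ with all terms of a common length exists, so your first step cannot be carried out and the guard-block argument never starts. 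This is precisely why the paper's Lemma \ref{lem: onlyif} does not equalize lengths; it keeps the window straddling $k^{\ell}$ and chooses guard prefixes that remain correct after the carry propagates into them, e.g. $(m')_k=1\,0\,(k-1)\,(m)_k$ for $\underline{w}=0$ (the carry turns the prefix into $1\,1\,0$, still contributing exactly one new occurrence), $(m')_k=1\,0^q\,1\,0\,(m)_k$ for $\underline{w}=0^q$, and a table of guards for $k=2$, $w_0=0$. Two further defects of your guard $\underline{w}\,d$: it begins with a zero when $w_0=0$, so it is not a valid base-$k$ representation once prepended at the most significant position; and a single separating digit cannot block occurrences of $\underline{w}$ straddling the junction, since such occurrences may use up to $q-1$ digits from each side (the paper inserts buffers of length $q$ for exactly this reason).

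For `$\Leftarrow$', your rigidity argument (both $(k-1)^{\ell}$ and $10^{\ell}$ would have to contain $\underline{w}$, forcing $(k,\underline{w})=(2,1)$) is correct and coincides with the paper's, but the next step conceals the real work: you ``locate the topmost occurrence of $\underline{w}$ in the shared high part'' without showing that one exists. A priori, all $N+1$ occurrences of $\underline{w}$ in the term with $s$-value $N+1$ could lie in the varying low parts of the representations. Proving $\underline{w}\prec\underline{x}$, where $\underline{x}$ is the longest common prefix, is the bulk of the paper's Lemma \ref{lem: if}: writing $(m+i)_k=\underline{x}\,(b+1)\,0^{p-|\underline{\alpha}^{(i)}|}\,\underline{\alpha}^{(i)}$ with $|\underline{\alpha}^{(i)}|\leq N-1$, one deduces $|\underline{x}|_{\underline{w}}\geq 1$ when $\underline{w}\neq 0^q$; but for $\underline{w}=0^q$ (and symmetrically $(k-1)^q$) this deduction genuinely fails, because all $N+1$ occurrences can sit inside the run $0^{p}$ immediately after $\underline{x}$, e.g. $(m+a)_k=\underline{x}\,(b+1)\,0^{q+N}$ with $0^q\not\prec\underline{x}\,(b+1)$, and the paper must then produce $m'$ by a separate direct construction rather than by erasing an occurrence from $\underline{x}$. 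Your sketch has no counterpart to either of these steps. Finally, even when $\underline{w}\prec\underline{x}$, the erasure itself requires the leftmost-occurrence care of Lemma \ref{prop-1} (replace the first digit of the leftmost occurrence by a digit that is nonzero and different from $w_0$, truncating what precedes it, with special handling for $k=2$) to guarantee that exactly one occurrence is destroyed and none is created; ``subtracting an appropriate multiple of a power of $k$'' does not by itself ensure this.
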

\begin{proof}
	The proof is lengthy. We separate it into two parts. The `only if' part is proved in Lemma \ref{lem: onlyif}. The `if' part is proved in Lemma \ref{lem: if}.
\end{proof}

\begin{lemma}\label{lem: onlyif}
	Suppose that $(k,\underline{w}) \ne (2,1)$. Let $n\geq 1$ and $\underline{u}\in\mathbb{N}^n$. Then for all $N\geq \lceil\log_k n\rceil+1$, we have  $\underline{u}+\underline{1}\in\mathcal{F}^{(1)}_{\mathbf{s}}(n,N+1)$ if  $\underline{u}\in\mathcal{F}^{(1)}_{\mathbf{s}}(n,N)$.
\end{lemma}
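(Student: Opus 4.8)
The goal is to show that if $\underline{u}\in\mathcal{F}^{(1)}_{\mathbf{s}}(n,N)$, i.e. $\underline{u}$ is a factor of length $n$ in which the letter $N$ occurs and whose letters all lie in $\Sigma_{N+1}$, then $\underline{u}+\underline{1}$ is a factor in which the letter $N+1$ occurs (and whose letters all lie in $\Sigma_{N+2}$). The natural strategy, mirroring the forward direction of Proposition~\ref{prop:7}, is \emph{constructive}: starting from an occurrence $\underline{u}=\mathbf{s}[m,m+n-1]$, I would produce an explicit index $m^\prime$ such that $s(m^\prime+i)=s(m+i)+1$ for every $0\leq i\leq n-1$. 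The first thing to do is fix the window: since $\underline{u}$ has length $n$, write $m=\sum m_j k^{\ell-1-j}$ and let $\ell$ be large enough that $(m)_k,(m+1)_k,\dots,(m+n-1)_k$ all have the same length, except possibly for a carry across the top which I handle below. Because $N\geq\lceil\log_k n\rceil+1$, the block of $n$ consecutive integers is short compared to the relevant powers of $k$, which is exactly the lever that keeps the occurrence count of $\underline{w}$ controlled.

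\textbf{Key steps.} The heart of the matter is to prepend (or insert) a single extra copy of $\underline{w}$ in a region of the base-$k$ expansion that is fixed across the whole window $m,m+1,\dots,m+n-1$, so that every term's count of $\underline{w}$ goes up by exactly one. Concretely I would look for a prefix position: choose a power $k^p$ with $p\geq |(m+n-1)_k|$ so that adding a suitable multiple of $k^p$ writes $\underline{w}$ (or creates one new occurrence of $\underline{w}$) in the high-order digits, strictly above every digit that varies as $i$ runs over $0,\dots,n-1$. Setting $m^\prime=[\underline{w}]_k\,k^{p}+m$ (up to correcting for the boundary digit between $\underline{w}$ and the old expansion so as not to create or destroy a second occurrence), one checks that for each $i$, $(m^\prime+i)_k=\underline{w}\,0^{t}\,(m+i)_k$ for a fixed padding length $t$, whence $s(m^\prime+i)=s(m+i)+1$. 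This gives $\underline{u}+\underline{1}=\mathbf{s}[m^\prime,m^\prime+n-1]$, and since $N\in\underline{u}$ forces $N+1\in\underline{u}+\underline{1}$ while all entries of $\underline{u}+\underline{1}$ are at most $N+1$, we get $\underline{u}+\underline{1}\in\mathcal{F}^{(1)}_{\mathbf{s}}(n,N+1)$ as required.

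\textbf{The main obstacle.} The delicate point is the boundary between the inserted block $\underline{w}$ and the original expansion $(m+i)_k$: inserting $\underline{w}$ must add \emph{exactly one} occurrence of $\underline{w}$, not zero and not two, uniformly for all $i$. If the junction digits happen to spell out a second copy of $\underline{w}$ for some values of $i$ but not others, the increment $s(m^\prime+i)-s(m+i)$ would fail to be constant. This is precisely where the hypothesis $(k,\underline{w})\neq(2,1)$ and the size condition $N\geq\lceil\log_k n\rceil+1$ enter: I would pad with a buffer of zeros (or, when $\underline{w}$ involves the digit $0$, a buffer chosen to avoid spurious overlaps) of length $t\geq |\underline{w}|$ between the prepended $\underline{w}$ and the variable part, so that no straddling occurrence of $\underline{w}$ can form and so that carries from $m+i$ never reach into the inserted block. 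Verifying that such a buffer always exists and that the resulting count increments by exactly one for every $i$ in the window is the bookkeeping that the full proof of Lemma~\ref{lem: onlyif} must carry out; the argument then splits naturally according to whether $\underline{w}$ contains the padding digit, which is the reason the degenerate words $0^q$ and $(k-1)^q$ behave differently in the final theorem.
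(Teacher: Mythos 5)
Your high-level strategy (prepend a copy of $\underline{w}$, suitably buffered, above all the digits that vary across the window, so that every term gains exactly one occurrence) is the same as the paper's, but there is a genuine gap precisely at the point you defer as ``bookkeeping'', and the one concrete mechanism you propose would fail. The difficulty is \emph{not} that carries from $m+i$ could reach into the inserted block --- with any nonempty buffer they cannot --- but that a carry inside the window changes the length of $(m+i)_k$: for the terms with $m+i\geq k^{\ell}$ the padding between the inserted block and the variable part is one digit shorter, and the variable part acquires a new leading digit $1$. Hence your identity $(m'+i)_k=\underline{w}\,0^{t}\,(m+i)_k$ ``for a fixed padding length $t$'' is false exactly in the windows that straddle a power of $k$, and the increment need not be uniform in $i$. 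Concretely, for $\underline{w}=0^q$ a zero run of length $t\geq q$ placed above $(m+i)_k$ contributes $t-q+1$ occurrences to the uncarried terms but only $t-q$ to the carried ones, so no choice of $t$ makes the increment equal to $1$ for both; and when $w_0=0$ prepending $\underline{w}$ at the very top is not even well defined (leading zeros are dropped): for $k=3$, $\underline{w}=01$, your $m'=[\underline{w}]_k k^p+m$ yields increment $1$ or $0$ according to whether the leading digit of $(m+i)_k$ equals $1$, which again breaks uniformity across a carry.

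What the paper's proof of Lemma \ref{lem: onlyif} actually supplies, and what is missing from your sketch, is twofold. First, the reduction to $|(m)_k|=\ell\geq N$: you cannot simply ``let $\ell$ be large enough'', since $\ell$ is determined by $m$; the paper re-locates the factor at $t=[\underline{v}\,(m)_k]_k$ for an arbitrarily long $\underline{w}$-free word $\underline{v}$ (this is where $(k,\underline{w})\neq(2,1)$ is first used), which guarantees $n\leq k^{\ell-1}\leq m$ and hence at most one length change in the window. Second, carry-absorbing paddings chosen case by case: $(m')_k=10^q10\,(m)_k$ when $\underline{w}=0^q$ (a carry turns the trailing $10$ into $11$, leaving the $0^q$-count unchanged), $(m')_k=\underline{w}\,0^q(m)_k$ when $w_0\neq 0$ (a carry merely shortens $0^q$ to $0^{q-1}$, which is harmless there), $(m')_k=1\,\underline{w}\,1^q(m)_k$ when $w_0=0$, $\underline{w}\neq0^q$ and $k\geq 3$ (a carry turns the last buffer digit $1$ into $2$), and the four constructions of Table \ref{table-1} when $k=2$. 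None of these buffers is a run of zeros, and devising them is the substance of the proof rather than routine verification; as written, your proposal identifies the obstacle but does not overcome it.
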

\begin{proof}
	Since $\underline{u}\in\mathcal{F}^{(1)}_{\mathbf{s}}(n,N)$, we have $\underline{u}=s(m)s(m+1)\cdots s(m+n-1)$ for some $m\geq 0$. Let $|(m)_k|=\ell$. We can assume that $\ell\geq N$. (In fact, when $(k,\underline{w}) \ne (2,1)$, we can find arbitrarily long word $\underline{v}\in\Sigma_{k}^{*}$ with $v_0\neq 0$ and $|\underline{v}\,(m)_k|_{w}=0$. Write $t=\bigl[\underline{v}\,(m)_k\bigr]_k$. We have $\underline{u}=s(t)s(t+1)\cdots s(t+n-1)$ and $|(t)_k|\geq N$.) Then \[n\leq k^{N-1}\leq k^{\ell-1}\leq m <k^{\ell}\quad\text{and}\quad m+n-1\leq k^{\ell}+k^{\ell-1}.\] We conclude that \[|(m+n-1)_k|-|(m)_k|=0\text{ or }1.\] To show $\underline{u}+\underline{1}\in\mathcal{F}^{(1)}_{\mathbf{s}}(n,N+1)$, we only need to find $m^{\prime}$ such that for all $0\leq i\leq n-1$,
	\begin{equation}\label{eq:p2-1}
		s(m^{\prime}+i)=s(m+i)+1.
	\end{equation}
	In the following, for $0\leq i\leq n-1$, whenever $m+i\geq k^{\ell}$, we write \[(m+i)_k=1\underline{\omega}^{(i)}\]
	where $\underline{\omega}^{(i)}\in\Sigma_k^{\ell}$.
	\begin{itemize}
		\item {\bf Case 1:} $\underline{w}=0$. Set $m^{\prime}=k^{\ell+2}+(k-1)k^{\ell}+m$, namely $(m^{\prime})_k=10(k-1)(m)_k$. If $m+n-1<k^{\ell}$, then for all $0\leq i\leq n-1$, $(m^{\prime}+i)_k=10(k-1)(m+i)_k$. Thus \eqref{eq:p2-1} holds. If $m+n-1\geq k^{\ell}$, then for all $0\leq i\leq n-1$,
		\[(m^{\prime}+i)_k=\begin{cases}
			10(k-1)(m+i)_k, & \text{ if } m+i<k^{\ell},\\
			110\,\underline{\omega}^{(i)}, & \text{ if }m+i\geq k^{\ell},
		\end{cases}\]
		which implies that \eqref{eq:p2-1} holds.
		\item {\bf Case 2:} $\underline{w}=0^q$ where $q\geq 2$. If $m+n-1<k^{\ell}$, then let $m^{\prime}=k^{\ell+q}+m$, namely $(m^{\prime})_k=10^q(m)_k$. We see that for all $0\leq i\leq n-1$, $(m^{\prime}+i)_k=10^q(m+i)_k$ and $|(m^{\prime}+i)_k|_{0^q}=|10^q(m+i)_k|_{0^q}=1+|(m+i)_k|_{0^q}$ since $m\geq k^{\ell-1}$. Thus \eqref{eq:p2-1} holds. If $m+n-1\geq k^{\ell}$, then we choose $m^{\prime}=k^{\ell+q+2}+k^{\ell+1}+m$, namely $(m^{\prime})_k=10^q10(m)_k$. For $0\leq i\leq n-1$, 
		\[(m^{\prime}+i)_k=\begin{cases}
			10^q10(m+i)_k, & \text{ if }m+i<k^{\ell},\\
			10^q11\,\underline{\omega}^{(i)}, & \text{ if }m+i\geq k^{\ell},
		\end{cases}\]
		which implies that \eqref{eq:p2-1} holds.
		\item {\bf Case 3:} $w_0\neq 0$. Let $m^{\prime}$ be the integer with $(m^{\prime})_k=\underline{w}\,0^q(m)_k$, where $q=|\underline{w}|$. Then for all $0\leq i\leq n-1$, 
		\[(m^{\prime}+i)_k=\begin{cases}
			\underline{w}\,0^q\,(m+i)_k, & \text{ if } m+i<k^{\ell},\\
			\underline{w}\,0^{q-1}(m+i)_k, & \text{ if }m+i\geq k^{\ell}.
		\end{cases}\]
		Since $|\underline{w}|=q$ and $w_0\neq 0$, we have for all $0\leq i\leq n-1$,
		\begin{align*}
			|(m^{\prime}+i)_k|_{\underline{w}} & = \begin{cases}
				|\underline{w}\,0^q\,(m+i)_k|_{\underline{w}}=|\underline{w}\,0^q|_{\underline{w}}+|(m+i)_k|_{\underline{w}}, & \text{ if } m+i<k^{\ell},\\
				|\underline{w}\,0^{q-1}(m+i)_k|_{\underline{w}}=|\underline{w}\,0^{q-1}|_{\underline{w}}+|(m+i)_k|_{\underline{w}}, & \text{ if }m+i\geq k^{\ell},\end{cases}\\
			& = 1 + |(m+i)_k|_{\underline{w}}.
		\end{align*}
		Consequently, \eqref{eq:p2-1} holds.
		\item {\bf Case 4:} $w_0=0$ and $\underline{w}\neq 0^{q}$. When $k\geq 3$, let $m^{\prime}$ be the integer with $(m^{\prime})=1\,\underline{w}\,1^{q}\,(m)_k$. For all $0\leq i\leq n-1$, 
		\begin{equation}\label{eq:p2-c4}
			(m^{\prime}+i)_k=\begin{cases}
				1\,\underline{w}\,1^q\,(m+i)_k, & \text{ if } m+i<k^{\ell},\\
				1\,\underline{w}\,1^{q-1}2\,\underline{\omega}^{(i)}, & \text{ if }m+i\geq k^{\ell},
			\end{cases}
		\end{equation}
		where $(m+i)_k=1\,\underline{\omega}^{(i)}$ for $k^{\ell}-m\leq i\leq n-1$. Note that when $m+i\geq k^{\ell}$, \[|\underline{\omega}^{(i)}|_{\underline{w}}=|2\,\underline{\omega}^{(i)}|_{\underline{w}}=|1\,\underline{\omega}^{(i)}|_{\underline{w}}=|(m+i)_k|_{\underline{w}}.\]
		It follows from \eqref{eq:p2-c4} that for all $0\leq i\leq n-1$, 
		\[|(m^{\prime}+i)_k|_{\underline{w}}=|1\,\underline{w}\,1^{q-1}|_{\underline{w}}+|(m+i)_k|_{\underline{w}} =  1 + |(m+i)_k|_{\underline{w}}.\] We conclude that $m^{\prime}$ satisfies \eqref{eq:p2-1}. In the following, we deal with the sub-case $k=2$.

		\begin{table}[htbp]
			\renewcommand\arraystretch{1.5}
			\centering
			\begin{tabular}{>{$}c<{$}|>{$}c<{$}|>{$}c<{$}|>{$}c<{$}}
				\hline
				\hline
				\multirow{2}{*}{$\underline{w}=0w_1\cdots w_{q-1}$} & \multirow{2}{*}{$(m^{\prime})_k$} & \multicolumn{2}{c}{$(m^{\prime}+i)_k$}\\
				\cline{3-4}
				& & \text{if }m+i<k^{\ell} & \text{if }m+i\geq k^{\ell}\\
				\hline
				w_1=0 &  1\,\underline{w}\,1^q0\,(m)_k & 1\,\underline{w}\,1^q0\,(m+i)_k & 1\,\underline{w}\,1^{q}1\,\underline{\omega}^{(i)}\\
				\hline
				\makecell{w_1=1, w_{q-1}=1\\ (\underline{w}=01^{q-1})} & 101^{q-2}01^{q-1}(m)_k & 101^{q-2}01^{q-1}(m+i)_k & 101^{q-1}0^{q-1}\,\underline{\omega}^{(i)}\\
				\hline
				\makecell{w_1=1, w_{q-1}=1\\ (\underline{w}\neq 01^{q-1})} & 1\underline{w}\,0^q1^q\,(m)_k  &1\underline{w}\,0^q1^q\,(m+i)_k & 1\underline{w}\,0^{q-1}10^q\,\underline{\omega}^{(i)}\\
				\hline
				w_1=1, w_{q-1}=0 & 1\underline{w}1^{q}01^{q}(m)_k & 1\underline{w}1^{q}01^{q}(m+i)_k & 1\underline{w}1^{q}10^{q}\,\underline{\omega}^{(i)}\\
				\hline
				\hline
			\end{tabular}
			\caption{$k=2$ and $\underline{w}=0w_1\cdots w_{q-1}$.}\label{table-1}
		\end{table}
		When $k=2$, the integer $m^{\prime}$ are chosen according to the different forms of the word $\underline{w}$; see Table \ref{table-1}. We remark that $1\triangleleft  (m)_k$ when $k=2$. Then one can see from Table \ref{table-1} that for every $\underline{w}=0w_1\cdots w_{q-1}$, $|(m^{\prime}+i)_k|_{\underline{w}}=1+|(m+i)_k|_{\underline{w}}$ for all $0\leq i\leq n-1$. Thus \eqref{eq:p2-1} holds. \qedhere
	\end{itemize}   
\end{proof}
	
To prove the `if' part of Proposition \ref{prop:2}, we need the following auxillary result.
\begin{lemma}\label{prop-1}
	Suppose that $(k,\underline{w}) \ne (2,1)$. Let $m$ and $n$ be positive integers. Let $\underline{x}$ be the longest common prefix of $(m)_k$, $(m+1)_k$, $\dots$, $(m+n-1)_k$. If $\underline{w}\prec\underline{x}$, then there exists $m'$ such that 
	\begin{equation}\label{eq:p-1}
		\mathbf{s}[m', m^{\prime}+n-1]=\mathbf{s}[m,m+n-1]-\underline{1}.    
	\end{equation}
\end{lemma}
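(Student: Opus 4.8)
The plan is to reduce \eqref{eq:p-1} to a purely combinatorial statement about a single prefix word, and then to build that prefix word by hand.

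First I would show that the hypothesis forces the representations $(m)_k,(m+1)_k,\dots,(m+n-1)_k$ to all have the same length $\ell$. If not, some power of $k$ is crossed inside this range, so two adjacent representations are $(k-1)^{\ell'}$ and $1\,0^{\ell'}$; their longest common prefix is $\varepsilon$ when $k\geq 3$ and the single letter $1$ when $k=2$. Hence the longest common prefix $\underline{x}$ of the whole list has length at most $1$, and $\underline{w}\prec\underline{x}$ would force either $\underline{w}=\varepsilon$ (impossible) or $\underline{w}=1$ with $k=2$, i.e. the excluded pair $(k,\underline w)=(2,1)$. So all lengths equal $\ell$, and writing $(m+i)_k=\underline{x}\,\underline{y}^{(i)}$ with $r:=\ell-|\underline{x}|$, each suffix $\underline{y}^{(i)}$ has length $r$ and $[\underline{y}^{(i)}]_k=[\underline{y}^{(0)}]_k+i$, so the $\underline{y}^{(i)}$ run through $n$ consecutive values with no carry reaching $\underline{x}$. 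Note also $|\underline{x}|\geq q:=|\underline{w}|$ and that $\underline{x}$ begins with a nonzero letter, being a nonempty prefix of the canonical representation $(m)_k$; in particular $s(m+i)\geq 1$ for all $i$, so the right-hand side of \eqref{eq:p-1} is meaningful.

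Next I would reduce the problem to modifying only the prefix. For a word $\underline{x}\,\underline{y}$ the occurrences of $\underline{w}$ split into those lying entirely in $\underline{x}$, those straddling the boundary, and those lying entirely in $\underline{y}$; the straddling ones depend only on the last $q-1$ letters of $\underline{x}$ and the first $q-1$ letters of $\underline{y}$. Thus if I can produce a word $\underline{x}'$ with a nonzero leading letter, with the same last $q-1$ letters as $\underline{x}$, and with $|\underline{x}'|_{\underline{w}}=|\underline{x}|_{\underline{w}}-1$, then $|\underline{x}'\,\underline{y}^{(i)}|_{\underline{w}}=|\underline{x}\,\underline{y}^{(i)}|_{\underline{w}}-1$ for every $i$, because the straddling and the $\underline{y}$-internal contributions are unchanged. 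Setting $m':=[\underline{x}'\,\underline{y}^{(0)}]_k$, the nonzero leading letter and the absence of carries guarantee $(m'+i)_k=\underline{x}'\,\underline{y}^{(i)}$, whence $s(m'+i)=s(m+i)-1$ for all $0\leq i\leq n-1$, which is exactly \eqref{eq:p-1}.

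It remains to construct $\underline{x}'$, and this is the heart of the argument. The idea is to locate the leftmost occurrence of $\underline{w}$ in $\underline{x}$ and excise exactly one copy. When $\underline{w}=0^q$ or $(k-1)^q$ this is clean: $\underline{x}$ contains a maximal run of the repeated letter of length $\geq q$, and deleting one letter from the interior of that run lowers $|\underline{x}|_{\underline{w}}$ by exactly one while leaving the leading letter and the terminal $q-1$ letters intact. For a general $\underline{w}$ the difficulty is twofold: a naive deletion can destroy \emph{extra}, self-overlapping occurrences (e.g. deleting one copy of $\underline{w}=11$ from $111$ removes two), and it can \emph{create} spurious occurrences across the new junction (e.g. deleting the copy of $\underline{w}=12$ from $1122$ leaves $12$, which still contains $\underline{w}$). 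My plan is therefore to remove the leftmost occurrence together with a short buffer block chosen so that no occurrence can cross the newly created junctions, selecting the buffer and the number of deleted letters according to the first and last letters of $\underline{w}$, in the spirit of the case analysis of Lemma~\ref{lem: onlyif} and Table~\ref{table-1}. Verifying that this changes the count by exactly $-1$ — neither more, from overlaps, nor less, from junction creation — while preserving the terminal $q-1$ letters, is the main obstacle, and I expect it to require splitting into a few cases according to the border and self-overlap structure of $\underline{w}$ (with separate attention to the situation where the unique occurrence lies within the last $q-1$ letters of $\underline{x}$, handled by keeping the prescribed border and filling the rest with an occurrence-free block).
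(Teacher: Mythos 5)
Your first two steps are correct and coincide with the paper's setup: the hypothesis $\underline{w}\prec\underline{x}$ rules out crossing a power of $k$ (this is exactly where $(k,\underline{w})\neq(2,1)$ enters), so $(m+i)_k=\underline{x}\,\underline{u}^{(i)}$ for all $0\leq i\leq n-1$ with no carry ever reaching $\underline{x}$ (the paper's condition \eqref{eq:1}), and it then suffices to replace $\underline{x}$ by a suitable word $\underline{x}'$. But your proof stops exactly where the lemma lives: $\underline{x}'$ is never constructed. What you offer instead is a plan (``remove the leftmost occurrence together with a short buffer block'') accompanied by the admission that verifying the count drops by exactly one ``is the main obstacle'' and will ``require splitting into a few cases.'' That is a statement of intent, not a proof. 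For comparison, the paper's construction needs no buffer: it passes to the \emph{leftmost} occurrence of $\underline{w}$ in $\underline{x}$, at position $j_0$, deletes everything to its left, and (for $k\geq 3$) replaces $x_{j_0}$ by a letter that is neither $0$ nor $w_0$. Leftmostness guarantees that exactly one occurrence is destroyed, and the new first letter being different from $w_0$ guarantees that none is created; both of your worries (self-overlaps and junction creation) disappear at once, and only $k=2$ with $w_0=1$ needs separate treatment.

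Worse, your plan is provably not completable in the form you state it. Take $k=2$, $\underline{w}=10$, $n=2$, $m=25$: then $(m)_2=11001$, $(m+1)_2=11010$, $\underline{x}=110$, $\underline{u}^{(0)}=01$, $\underline{u}^{(1)}=10$, and $s(m)=1$, $s(m+1)=2$. You require $\underline{x}'$ to begin with a nonzero letter (hence with $1$), to end with the same last $q-1=1$ letter as $\underline{x}$ (hence with $0$), and to satisfy $|\underline{x}'|_{10}=|\underline{x}|_{10}-1=0$; but every binary word that begins with $1$ and ends with $0$ contains $10$, a contradiction. In fact no nonempty prefix with nonzero leading letter can work at all here, since already $|\underline{x}'\,01|_{10}=0$ is impossible for the same reason. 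The lemma is nevertheless true for this $m$: take $m'=[\underline{u}^{(0)}]_2=1$, so that $(m')_2=1$ and $(m'+1)_2=10$ --- i.e.\ the \emph{empty} prefix, with the leading zero of $\underline{u}^{(0)}$ stripped, which is harmless precisely because $w_0\neq 0$. This degenerate replacement lies outside your framework (it cannot preserve the last $q-1$ letters, and the junction argument must be replaced by a leading-zero argument); it is the paper's device in its sub-case $|\underline{x}|_1=1$. So the step you postponed is not routine bookkeeping: it is the actual content of the lemma, and your stated requirements on $\underline{x}'$ must be weakened before any case analysis could close it. (Incidentally, the same configuration shows that the paper's sub-case ``$k=2$, $w_0=1$, $|\underline{x}|_1\geq 2$'' is itself delicate: the ``next $1$ after $j_0$'' it invokes does not exist in $\underline{x}=110$; the correct dichotomy is whether a $1$ occurs after $j_0$, not the value of $|\underline{x}|_1$.)
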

\begin{proof}
	Recall that  $\underline{w}=w_0w_1\dots w_{q-1}\in\Sigma_k^{q}$. Assume $\underline{x}=x_0x_1\dots x_p$ and for $i=0,1,\dots,n-1$, write $(m+i)_k=\underline{x}\,\underline{u}^{(i)}$, where $\underline{u}^{(i)}\in\Sigma_{k}^*$. The condition implies implicitly that \begin{equation}\label{eq:1}
		\bigl[\underline{u}^{(0)}\bigr]_k+(n-1)\leq k^{|\underline{u}^{(0)}|}-1.
	\end{equation}
	
	Let \[j_0=\min\{0\leq j\leq p-q\colon \underline{x}[j, j+q-1 ]=\underline{w}\}.\] 
	Then $x_{j_0}=w_0$. When $k> 2$, we can choose $x_{j_0}^{\prime}\in\Sigma_{k}$ such that $x_{j_0}^{\prime}\neq 0$ and $x_{j_0}^{\prime}\neq w_0$. (This is feasible when $k>2$, and the restriction $x_{j_0}^{\prime}\neq 0$ ensures that the result holds for $j_0=0$.) Let \[\underline{x}^{\prime}=x_{j_0}^{\prime}x_{j_0+1}\dots x_{p}\] and $m^{\prime}=[\underline{x}^{\prime}\underline{u}^{(0)}]_k$. By \eqref{eq:1}, we have $(m^{\prime}+i)_k=\underline{x}^{\prime}\underline{u}^{(i)}$. It follows that for all $0\leq i\leq n-1$, \[s(m^{\prime}+i)=s(m+i)-1.\]
	
	In the following, we suppose that $k=2$.
	\begin{itemize}
		\item In this case, if $w_0=0$, then we choose $\underline{x}^{\prime}=1x_{j_0+1}\dots x_{p}$. Consequently, $m^{\prime}=[\underline{x}^{\prime}\underline{u}^{(0)}]_k$ satisfies \eqref{eq:p-1}.
		\item If $w_0=1$, then by out assumption, $|\underline{w}|\geq 2$. If $|\underline{x}|_1=1$, then for all $i=0,1,\dots,n-1$, $|\underline{x}\,\underline{u}^{(i)}|_w=|\underline{x}|_w+|\underline{u}^{(i)}|_w=1+|\underline{u}^{(i)}|_w$. It follows that $m^{\prime}=\bigl[\underline{u}^{(i)}\bigr]_k$ satisfies \eqref{eq:p-1}. If $|\underline{x}|_1\geq 2$, then let $j_1=\min\{j_0+1\leq j\leq p\colon x_j=1\}$ be the index of second $1$ in $\underline{x}$. Let $\underline{x}^{\prime}=x_{j_1}x_{j_1+1}\dots x_{p}$. We have $m^{\prime}=\bigl[\underline{x}^{\prime}\underline{u}^{(0)}\bigr]_k$ satisfies \eqref{eq:p-1}. We are done. \qedhere
	\end{itemize}
\end{proof}
	
Now we prove the `if' part of Proposition \ref{prop:2}.
\begin{lemma}\label{lem: if}
	Suppose that $(k,\underline{w}) \ne (2,1)$. Let $n\geq 1$ and $\underline{u}\in\mathbb{N}^n$. Then for all $N\geq \lceil\log_k n\rceil+1$, we have  $\underline{u}\in\mathcal{F}^{(1)}_{\mathbf{s}}(n,N)$ if $\underline{u}+\underline{1}\in\mathcal{F}^{(1)}_{\mathbf{s}}(n,N+1)$.
\end{lemma}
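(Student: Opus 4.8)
The plan is to prove the reverse implication of Proposition \ref{prop:2}: starting from $\underline{u}+\underline{1}\in\mathcal{F}^{(1)}_{\mathbf{s}}(n,N+1)$, we must produce an occurrence of $\underline{u}$ in $\mathbf{s}$ in which the largest letter is exactly $N$. Write $\underline{u}+\underline{1}=\mathbf{s}[m,m+n-1]$ for some $m\ge 0$, so that $\max_{0\le i\le n-1}s(m+i)=N+1$ and at least one $s(m+i)$ equals $N+1$. The aim is to ``decrement by one occurrence'' of $\underline{w}$: find $m'$ with $s(m'+i)=s(m+i)-1$ for all $0\le i\le n-1$, which immediately gives $\underline{u}=\mathbf{s}[m',m'+n-1]$ with largest letter $N$, hence $\underline{u}\in\mathcal{F}^{(1)}_{\mathbf{s}}(n,N)$.

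The central idea is to feed Lemma \ref{prop-1} the right window. Let $\underline{x}$ be the longest common prefix of $(m)_k,(m+1)_k,\dots,(m+n-1)_k$; Lemma \ref{prop-1} does exactly the work we want, \emph{provided} $\underline{w}\prec\underline{x}$. So the whole proof reduces to arranging that a copy of $\underline{w}$ lies in this common prefix. First I would control lengths: since $N\ge\lceil\log_k n\rceil+1$ forces $n\le k^{N-1}$, and since $\max_i s(m+i)=N+1$ means some $(m+i)_k$ already contains $N+1\ge 2$ disjoint copies of $\underline{w}$ and is therefore fairly long, the $n$ consecutive integers $m,\dots,m+n-1$ can straddle at most one power-of-$k$ boundary, exactly as in Lemma \ref{lem: onlyif}. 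This keeps the base-$k$ lengths among $(m)_k,\dots,(m+n-1)_k$ equal to a common $\ell$ (up to one carry), so that a genuinely nonempty common prefix $\underline{x}$ exists and the suffixes $\underline{u}^{(i)}$ vary only in their low-order digits.

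The key quantitative step is to show that the common prefix $\underline{x}$ must already contain an occurrence of $\underline{w}$. The count $|(m+i)_k|_{\underline{w}}\ge N+1$ for some $i$, together with $n\le k^{N-1}$, means the block $\underline{w}$ occurs in the high-order part of $(m+i)_k$ that is shared across the whole window: the number of positions where the representations can differ is bounded by the number of low-order digits that change as we add at most $n-1\le k^{N-1}-1$, which is at most about $N$ digits, whereas the total number of occurrences of $\underline{w}$ is at least $N+1$. By a pigeonhole/counting argument — the varying suffix is too short to absorb all $N+1$ occurrences — at least one occurrence of $\underline{w}$ is forced to sit entirely inside the common prefix $\underline{x}$. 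Here I expect to lean on the hypothesis $(k,\underline{w})\ne(2,1)$: in the excluded case the carry structure is degenerate and occurrences of the single digit $1$ cannot be confined to the shared prefix, which is precisely why $(2,1)$ was handled separately in Proposition \ref{prop:6}.

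Once $\underline{w}\prec\underline{x}$ is established, I would invoke Lemma \ref{prop-1} to obtain $m'$ with $\mathbf{s}[m',m'+n-1]=\mathbf{s}[m,m+n-1]-\underline{1}=\underline{u}$, and observe that subtracting $\underline{1}$ from every coordinate lowers the maximum letter from $N+1$ to $N$ while keeping it attained, so $\underline{u}\in\mathcal{F}_{\mathbf{s}}(n,N)\setminus\mathcal{F}_{\mathbf{s}}(n,N-1)=\mathcal{F}^{(1)}_{\mathbf{s}}(n,N)$, completing the proof. The main obstacle is the pigeonhole step: I must argue carefully that the portion of the base-$k$ expansions that can change across the window of width $n$ is strictly shorter than what would be needed to hide all the required occurrences of $\underline{w}$, and handle the boundary-crossing case where one index $m+i$ acquires an extra leading digit. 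Getting the digit-length bookkeeping exactly right — especially the interaction between the $\lceil\log_k n\rceil+1$ threshold and the multiplicity $N+1$ — is where the real care is needed.
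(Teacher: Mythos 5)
Your overall strategy---reduce to showing $\underline{w}\prec\underline{x}$ for the longest common prefix $\underline{x}$ and then invoke Lemma \ref{prop-1}---is exactly the paper's strategy for \emph{generic} $\underline{w}$, and your pigeonhole count is essentially the paper's inequalities \eqref{eq:nec-3-1}--\eqref{eq:nec-3-2}. But the pigeonhole claim is false precisely when $\underline{w}$ is a constant block, i.e. $\underline{w}\in\{0\}^*\cup\{k-1\}^*$, and such words are inside the scope of this lemma (only $(k,\underline{w})=(2,1)$ is excluded). The flaw in the counting is that the representations $(m)_k,\dots,(m+n-1)_k$ do \emph{not} split into ``common prefix'' plus ``about $N$ varying low-order digits''. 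Writing the carry position as in \eqref{eq:nec-1}, the indices $i\geq a$ have representations $\underline{x}\,(b+1)\,0^{p-|\underline{\alpha}^{(i)}|}\underline{\alpha}^{(i)}$ and the indices $i<a$ have $\underline{x}\,b\,(k-1)^{p-|\underline{\gamma}^{(i)}|}\underline{\gamma}^{(i)}$: between the carry digit and the short varying tail there is a constant run of $0$'s (resp.\ $(k-1)$'s) whose length can be arbitrarily large, and this run lies \emph{outside} the common prefix $\underline{x}$ because the carry digit already differs. When $\underline{w}=0^q$ or $(k-1)^q$, all $N+1$ occurrences of $\underline{w}$ can sit inside that run, so no occurrence is forced into $\underline{x}$. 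Concretely: take $k=2$, $\underline{w}=00$, $n=2$, $N=2$ (the minimum allowed, since $\lceil\log_2 2\rceil+1=2$) and $m=79$. Then $(79)_2=1001111$, $(80)_2=1010000$, $s(79)=1$, $s(80)=3=N+1$, so $\underline{u}+\underline{1}=(1,3)\in\mathcal{F}^{(1)}_{\mathbf{s}}(2,3)$ with $\underline{u}=(0,2)$; but the longest common prefix of $(79)_2$ and $(80)_2$ is $\underline{x}=10$, which contains no occurrence of $00$. Lemma \ref{prop-1} is therefore not applicable, and your argument halts even though the conclusion is true (take $m'=7$: $s(7)=|111|_{00}=0$, $s(8)=|1000|_{00}=2$).

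The paper closes exactly this gap by a case split inside the proof: in the sub-case $t\geq a$ with $\underline{w}\neq 0^q$ (resp.\ $t\leq a-1$ with $\underline{w}\neq (k-1)^q$) your counting works, because at most one occurrence of $\underline{w}$ can cross the carry digit and at most $N-1$ can end in the varying tail; but when $\underline{w}=0^q$ or $(k-1)^q$ and $\underline{w}\not\prec\underline{x}$, the paper pins down the structure completely ($p=q+N$ and $b=0$, resp.\ $b=k-2$, forced by the hypothesis that every letter of $\underline{u}+\underline{1}$ is at least $1$) and then constructs $m'$ explicitly via $(m')_k=(k-1)^q\,\underline{\beta}^{(0)}$, bypassing Lemma \ref{prop-1} altogether. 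Two further points. First, your proof never uses the fact that every entry of $\underline{u}+\underline{1}$ is $\geq 1$; this is what rules out the window crossing a power $k^{\ell}$ (otherwise $(k-1)^{\ell}$ and $10^{\ell}$ would both have to contain $\underline{w}$, forcing $(k,\underline{w})=(2,1)$), and it also drives the forced structure in the constant-block case. Second, your heuristic that $(2,1)$ is excluded ``because the pigeonhole degenerates'' is misplaced: $(2,1)$ is excluded by the boundary-crossing argument just described, whereas the words for which the pigeonhole genuinely degenerates, namely $0^q$ and $(k-1)^q$, are not excluded and must be handled within this very lemma.
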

\begin{proof}
	If $\underline{u}+\underline{1}\in\mathcal{F}^{(1)}_{\mathbf{s}}(n,N+1)$, then there exists an $m$ such that \[\underline{u}+\underline{1}=s(m)s(m+1)\cdots s(m+n-1)\] and $1\leq s(m+i)\leq N+1$ for all $0\leq i\leq n-1$. Namely, $|(m+i)_k|_{\underline{w}}\geq 1$ for all $0\leq i\leq n-1$. Moreover, we can assume that $\ell=|(m)_k|\geq N$. Consequently, $n\leq k^{N-1}\leq k^{\ell-1}\leq m<k^{\ell}$. 
	
	If $m+n-1\geq k^{\ell}$, then there exists $0\leq a\leq n-1$ such that $m+a=k^{\ell}$ and $m+a-1=k^{\ell}-1$. That is $(m+a)_k=10^{\ell}$ and $(m+a-1)_k=(k-1)^{\ell}$. Noting that $|(m+a)_k|_{\underline{w}}\geq 1$ and $|(m+a-1)_k|_{\underline{w}}\geq 1$, we have $k=2$ and $\underline{w}=1$ which is out of the scope of the result. 
	
	In the following, we assume that $m+n-1<k^{\ell}$. Let $\underline{x}$ be the longest common prefix of $(m)_k$, $(m+1)_k$, $\dots$, $(m+n-1)_k$. (It is possible that $\underline{x}=\varepsilon$.) Now, if we can show that $\underline{w}\prec\underline{x}$, then the result follows from Lemma \ref{prop-1}. 
	
	Let $p=\ell-|\underline{x}|-1$. Then there exists $1\leq a\leq n-1$ such that 
	\begin{equation}\label{eq:nec-1}
		\left\{
		\begin{aligned}
			(m+a-1)_k & = \underline{x}\,b\,(k-1)^{p},\\
			(m+a)_k & = \underline{x}\,(b+1)\,0^{p},
		\end{aligned}
		\right.
	\end{equation}
	where $b\in\Sigma_{k-1}$. In fact, if such $a$ does not exist, then $(m)_k, (m+1)_k, \dots, (m+n-1)_k$ differ only in the last digit, which implies $\underline{w}\prec\underline{x}$. Further, if $\underline{w}\not\prec\underline{x}$, noting that $p\geq N$ and $0\leq a \leq n-1 \leq k^{N-1}-1\leq k^{p-1}-1$, the number $b$ that satisfies \eqref{eq:nec-1} is unique and so is $a$.
	
	Recall that $\mathcal{F}_{\mathbf{s}}^{(1)}(n,N+1)=\mathcal{F}_{\mathbf{s}}(n,N+1)\backslash\mathcal{F}_{\mathbf{s}}(n,N)$. Then $\underline{u}+\underline{1}\in\mathcal{F}^{(1)}_{\mathbf{s}}(n,N+1)$ implies that 
	\begin{equation}\label{eq:nec-2}
		s(m+t)=|(m+t)_k|_{\underline{w}}=N+1
	\end{equation} 
	for some $0\leq t\leq n-1$. There are two sub-cases: $t\geq a$ and $t\leq a-1$.
	\begin{itemize}
		\item {\bf Case 1:} $t\geq a$. For $a\leq i\leq n-1$, write $\underline{\alpha}^{(i)}=(i-a)_k$. By \eqref{eq:nec-1} and the uniqueness of $b$, we have
			\begin{equation}\label{eq:nec-2-1}
				(m+i)_k= \underline{x}\,(b+1)\,0^{p-|\underline{\alpha}^{(i)}|}\underline{\alpha}^{(i)},
			\end{equation}     
			where $|\underline{\alpha}^{(i)}|\leq N-1$ since $i-a<n\leq k^{N-1}$.
		\begin{itemize}
			\item  If $\underline{w}\neq 0^q$, then for some $a\leq t\leq n-1$ satisfying \eqref{eq:nec-2}, it holds that
				\begin{equation}\label{eq:nec-3-1}
					\bigl|\underline{x}\,(b+1)\,0^{p-|\underline{\alpha}^{(t)}|}\bigr|_{\underline{w}} \leq |\underline{x}|_{\underline{w}}+1.
				\end{equation}
				Further, since  $|\underline{\alpha}^{(t)}|\leq N-1$, we have
				\begin{equation}\label{eq:nec-3-2}
					|(m+t)_k|_{\underline{w}}= \bigl|\underline{x}\,(b+1)\,0^{p-|\underline{\alpha}^{(t)}|}\,\underline{\alpha}^{(t)}\bigr|_{\underline{w}}\leq \bigl|\underline{x}\,(b+1)\,0^{p-|\underline{\alpha}^{(t)}|}\bigr|_{\underline{w}}+N-1.
				\end{equation} 
				Combining \eqref{eq:nec-2}, \eqref{eq:nec-3-1}  and \eqref{eq:nec-3-2}, it follows that $|\underline{x}|_{\underline{w}}\geq 1$.
			\item If $\underline{w}=0^q$, then by \eqref{eq:nec-1} and \eqref{eq:nec-2-1}, we have \[|(m+a)_k|_{0^q}\geq |(m+t)_k|_{0^q}=N+1\] which implies $|(m+a)_k|_{0^q}=N+1$. If $0^q\prec\underline{x}$, then we are done. If $0^q\not\prec\underline{x}$, then $N+1=|(m+a)_k|_{0^q}=|\underline{x}\,(b+1)\,0^p|_{0^q}=|0^{p}|_{0^q}$. So, $p=q+N$ and for $a\leq i\leq n-1$, \[p-|\underline{\alpha}^{(i)}| =  q+N-|\underline{\alpha}^{(i)}|\geq q+1.\] 
			Combing the facts that $1\leq |(m+a-1)_k|_{0^q}=|\underline{x}\,b\,(k-1)^p|_{0^q}$ and $0^q\not\prec\underline{x}$, we have $b=0$ and $|\underline{x}\,b|_{0^q}=1$. Now \eqref{eq:nec-1} yields that
			\begin{align*}
				(m+a-1)_k & = \underline{x}\,0\,(k-1)^{q+1}\,(k-1)^{N-1},\\
				(m+a)_k & = \underline{x}\,1\,0^{q+1}\,0^{N-1}.
			\end{align*}
			Since $i\leq n-1<k^{N-1}$, we have 
			\begin{equation}\label{eq:nec-4}
				(m+i)_k=\begin{cases}
					\underline{x}\,0\,(k-1)^{q+1}\,\underline{\beta}^{(i)}, & \text{ if }0\leq i\leq a-1,\\
					\underline{x}\,1\,0^{q+1}\,\underline{\beta}^{(i)}, & \text{ if }a\leq i\leq n-1,
				\end{cases}
			\end{equation}
			where $\underline{\beta}^{(i)}\in\Sigma_k^{N-1}$. Now let $m^{\prime}$ be the integer such that \[(m^{\prime})_k = (k-1)^{q}\,\underline{\beta}^{(0)}.\] Then 
			\begin{equation}\label{eq:nec-5}
				(m^{\prime}+i)_k=\begin{cases}
					(k-1)^{q}\,\underline{\beta}^{(i)}, & \text{ if }0\leq i\leq a-1,\\
					1\,0^{q}\,\underline{\beta}^{(i)}, & \text{ if }a\leq i\leq n-1.
				\end{cases}
			\end{equation}
			Combing \eqref{eq:nec-4} and \eqref{eq:nec-5}, we have 
			\[|(m+i)_k|_{0^q}=\begin{cases}
				|\underline{x}\,0|_{0^q}+|\underline{\beta}^{(i)}|_{0^q} =|(m^{\prime}+i)_k|_{0^q}+1, & \text{ if }0\leq i\leq a-1,\\
				\bigl|0^{q+1}\,\underline{\beta}^{(i)}\bigr|_{0^q}=|(m^{\prime}+i)_k|_{0^q}+1,& \text{ if }a\leq i\leq n-1.
			\end{cases}\]
		\end{itemize}       
		\item {\bf Case 2:} $t\leq a-1$. 
			By the uniqueness of $b$, for $0\leq i\leq a-1$ we have
			\begin{equation}\label{eq:nec-2-1'}
				(m+i)_k= \underline{x}\,b\,(k-1)^{p-|\underline{\gamma}^{(i)}|}\underline{\gamma}^{(i)},
			\end{equation}     
			where $(k-1)\not\hspace{-2pt}\triangleleft\,\underline{\gamma}^{(i)}$, and thus  $|\underline{\gamma}^{(i)}|\leq N-1$ since $i<n\leq k^{N-1}$.
			\begin{itemize}
				\item If $\underline{w}\neq (k-1)^q$, then for some $0\leq t\leq a-1$ satisfying \eqref{eq:nec-2}, it holds that
					\begin{equation}\label{eq:nec-3-1'}
						\bigl|\underline{x}\,b\,(k-1)^{p-|\underline{\gamma}^{(t)}|}\bigr|_{\underline{w}} \leq |\underline{x}|_{\underline{w}}+1.
					\end{equation}
					Further, since  $|\underline{\gamma}^{(t)}|\leq N-1$, we have
					\begin{equation}\label{eq:nec-3-2'}
						|(m+t)_k|_{\underline{w}}= \bigl|\underline{x}\,b\,(k-1)^{p-|\underline{\gamma}^{(t)}|}\,\underline{\gamma}^{(t)}\bigr|_{\underline{w}}\leq \bigl|\underline{x}\,b\,(k-1)^{p-|\underline{\gamma}^{(t)}|}\bigr|_{\underline{w}}+N-1.
					\end{equation} 
					Combining \eqref{eq:nec-2}, \eqref{eq:nec-3-1'}  and \eqref{eq:nec-3-2'}, it follows that $|\underline{x}|_{\underline{w}}\geq 1$.
				\item If $\underline{w}=(k-1)^q$, then by \eqref{eq:nec-1} and \eqref{eq:nec-2-1'}, we have \[|(m+a-1)_k|_{(k-1)^q}\geq |(m+t)_k|_{(k-1)^q}=N+1\] 
				which implies $|(m+a-1)_k|_{(k-1)^q}=N+1$. If $(k-1)^q\prec\underline{x}$, then we are done. If $(k-1)^q\not\prec\underline{x}$, then $N+1=|(m+a-1)_k|_{(k-1)^q}=|\underline{x}\,b\,(k-1)^p|_{(k-1)^q}=|(k-1)^{p}|_{(k-1)^q}$. So, $p=q+N$ and for $a\leq i\leq n-1$, \[p-|\underline{\gamma}^{(i)}| =  q+N-|\underline{\gamma}^{(i)}|\geq q+1.\] 
				Combing the facts that $1\leq |(m+a)_k|_{(k-1)^q}=|\underline{x}\,(b+1)\,0^p|_{(k-1)^q}$ and $(k-1)^q\not\prec\underline{x}$, we have $b=k-2$ and $|\underline{x}\,(b+1)|_{(k-1)^q}=1$. Now \eqref{eq:nec-1} yields that
				\begin{align*}
					(m+a-1)_k & = \underline{x}\,(k-2)\,(k-1)^{q+1}\,(k-1)^{N-1},\\
					(m+a)_k & = \underline{x}\,(k-1)\,0^{q+1}\,0^{N-1}.
				\end{align*}
				Since $i\leq n-1<k^{N-1}$, we have 
				\begin{equation}\label{eq:nec-4'}
					(m+i)_k=\begin{cases}
						\underline{x}\,(k-2)\,(k-1)^{q+1}\,\underline{\beta}^{(i)}, & \text{ if }0\leq i\leq a-1,\\
						\underline{x}\,(k-1)\,0^{q+1}\,\underline{\beta}^{(i)}, & \text{ if }a\leq i\leq n-1,
					\end{cases}
				\end{equation}
				where $\underline{\beta}^{(i)}\in\Sigma_k^{N-1}$. Now let $m^{\prime}$ be the integer such that \[(m^{\prime})_k = (k-1)^{q}\,\underline{\beta}^{(0)}.\] Then 
				\begin{equation}\label{eq:nec-5'}
					(m^{\prime}+i)_k=\begin{cases}
						(k-1)^{q}\,\underline{\beta}^{(i)}, & \text{ if }0\leq i\leq a-1,\\
						1\,0^{q}\,\underline{\beta}^{(i)}, & \text{ if }a\leq i\leq n-1.
					\end{cases}
				\end{equation}
				Combing \eqref{eq:nec-4'} and \eqref{eq:nec-5'}, since $(k,\underline{w}) \ne (2,1)$, then the extra $1$ in $1\,0^{q}\,\underline{\beta}^{(i)}$ has no effect on the value of $|(m^{\prime}+i)_k|_{(k-1)^q}$. Hence,
				\begin{align*}
					|(m+i)_k|_{(k-1)^q}&=\begin{cases}
						\bigl|(k-1)^{q+1}\,\underline{\beta}^{(i)}\bigr|_{(k-1)^q},
						& \text{ if }0\leq i\leq a-1,\\
						|\underline{x}\,(k-1)|_{(k-1)^q}+|\underline{\beta}^{(i)}|_{(k-1)^q}, & \text{ if }a\leq i\leq n-1,
					\end{cases}\\
					&= \begin{cases}
						|(m^{\prime}+i)_k|_{(k-1)^q}+1,& \text{ if }0\leq i\leq a-1,\\
						|(m^{\prime}+i)_k|_{(k-1)^q}+1, & \text{ if }a\leq i\leq n-1.
					\end{cases}\qedhere      
				\end{align*}
			\end{itemize} 
	\end{itemize} 
\end{proof}

\subsubsection{The case \texorpdfstring{$(k,\underline{w})\ne (2,1)$ and $\underline{w}=0^q$ or $(k-1)^q$}{(k,w) does not equal to (2,1) and w is power of 0 or (k-1)}.}\label{section 0q k-1q}
In this case, we can easily obtain $P_{\mathbf{s}}(1,N)$ by finding integers $m$ such that $s(m)=|(m)_k|_{\underline{w}}=i$ for any $1\leq i\leq N$. 
For example, we could choose $(m)_k=10^{q+i-1}$ or $(k-1)^{q+i-1}$. Since $s(0)=0$, then
\begin{equation} \label{eq: n=1}
	P_{\mathbf{s}}(1,N)=N+1.
\end{equation}
In the following we assume that $n\geq 2$.
As defined in Section \ref{sec:notation},
\[P_{\mathbf{s}}^{(2)}(n,N) = P_{\mathbf{s}}^{(1)}(n,N)-P_{\mathbf{s}}^{(1)}(n,N-1)=\sharp\mathcal{F}_{\mathbf{s}}^{(1)}(n,N)-\sharp\mathcal{F}_{\mathbf{s}}^{(1)}(n,N-1).\]
By Proposition \ref{prop:2}, we have \[P_{\mathbf{s}}^{(2)}(n,N) = \sharp\{\underline{u}\in\mathcal{F}_{\mathbf{s}}^{(1)}(n,N)\colon 0\prec \underline{u}\}=:\sharp\mathcal{G}.\]
Namely, in order to calculate $P_{\mathbf{s}}^{(2)}(n,N)$, we shall find all $\underline{u}\in\mathcal{F}_{\mathbf{s}}(n,N)$ with $0\prec\underline{u}$ and $N\prec\underline{u}$.

Note that there are infinitely many $m$ with $s(m)=0$ and $s(m+1)\geq 1$. For instance, if $\underline{w}=0^{q}$ ($q\geq 1$), we have $s(k^{\ell}-1)=0$ and $s(k^{\ell})\geq 1$ for all $\ell\geq q$. Let \[\mathcal{N}=\{m\geq 0\colon s(m)=0,\, s(m+1)\geq 1\}\]
and write its elements in ascending order, i.e. $m_1<m_2<m_3<\dotsb$. 

We first discuss the case $\underline{w}=0^q$.
\begin{lemma}\label{lem:2}
	Let $\underline{w}=0^q$,  $N\geq0$ and $n\leq k^{N-2}$. For all $m>0$ satisfying $s(m+1)\geq N$, we have $s(m+i)>0$ for all $1\leq i \leq n$.
\end{lemma}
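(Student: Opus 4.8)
The plan is to track occurrences of the block $0^q$ under the shift $m+i=(m+1)+(i-1)$ by separating the base-$k$ representation of $m+1$ into a ``high part'' and a low block of length $N-2$. Since $s(m')=|(m')_k|_{0^q}$, the hypothesis $s(m+1)\ge N$ says that $(m+1)_k$ contains at least $N$ (possibly overlapping) occurrences of $0^q$. For each $1\le i\le n$ set $a=i-1$, so that $0\le a\le n-1<k^{N-2}$, and put $Q=\lfloor (m+1)/k^{N-2}\rfloor$, so that $(m+1)_k=(Q)_k\,\underline{r}$ where $\underline{r}\in\Sigma_k^{N-2}$ is the least significant block. (Note $(m+1)_k$ is long enough, having at least $N+q$ digits, that $Q\ge 1$; also the hypothesis $n\le k^{N-2}$ with $n\ge 1$ forces $N\ge 2$, so $k^{N-2}$ is a genuine integer $\ge 1$.)

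First I would show that at least two of the $N$ occurrences lie entirely inside the high part $(Q)_k$. Indeed, an occurrence $0^q$ that meets the low block $\underline{r}$ must have its starting index inside a window of only $N-2$ consecutive positions (namely $[\ell-N-q+3,\ell-q]$, where $\ell=|(m+1)_k|$), so at most $N-2$ occurrences can touch $\underline{r}$; as there are at least $N$ occurrences in total, at least $2$ are confined to $(Q)_k$.

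Next I would split according to the high part of $m+i$. Writing $m+1=Qk^{N-2}+r$ with $0\le r<k^{N-2}$, we have $m+i=Qk^{N-2}+(r+a)$ with $0\le r+a<2k^{N-2}$, hence $\lfloor (m+i)/k^{N-2}\rfloor\in\{Q,Q+1\}$. If it equals $Q$, then $(Q)_k$ is a prefix of $(m+i)_k$ and the two occurrences found above persist, so $s(m+i)\ge 2$. If it equals $Q+1$, then $(Q+1)_k$ is a prefix of $(m+i)_k$; passing from $(Q)_k$ to $(Q+1)_k$ alters only a suffix consisting of a run of trailing $(k-1)$'s together with the single digit immediately to its left. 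Since a run of $(k-1)$'s contains no zeros, at most one occurrence of $0^q$—one ending exactly at that single incremented digit—can be destroyed, so $(Q+1)_k$ retains at least one occurrence of $0^q$ and thus $s(m+i)\ge 1$. In either case $s(m+i)>0$, as required.

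The main obstacle is the bookkeeping in the incrementing case: I must argue precisely that adding $1$ to $Q$ changes only the described suffix, and therefore kills at most one occurrence of $0^q$, while also confirming the prefix structure $(m+i)_k=(Q)_k\cdots$ or $(Q+1)_k\cdots$ so that occurrences of $0^q$ lying in the high part are genuine factors of $(m+i)_k$. The counting step of the second paragraph is routine once the window of admissible starting indices is written down, and the constraint $n\le k^{N-2}$ enters exactly to guarantee $a<k^{N-2}$, which both confines the low block to $N-2$ digits and yields $N\ge 2$.
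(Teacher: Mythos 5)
Your proof is correct, but it follows a genuinely different route from the paper's. The paper anchors at the \emph{leftmost} occurrence of $0^q$: writing $(m+1)_k=\underline{x}\,\underline{w}\,\underline{z}$ with $|\underline{x}\,\underline{w}|_{\underline{w}}=1$, it notes that the remaining $N-1$ occurrences force $|\underline{z}|_0\geq N-1$, hence $[\underline{z}]_k\leq[(k-1)^{|\underline{z}|-N+1}0^{N-1}]_k$, so adding up to $k^{N-2}$ can never carry past $\underline{z}$; the prefix $\underline{x}\,\underline{w}$ thus survives in every $(m+i)_k$ and $s(m+i)\geq 1$ follows with no carry case analysis at all. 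You instead cut at the fixed position $N-2$ digits from the right, bank \emph{two} occurrences inside the high part $(Q)_k$ via the window count, and then let the carry happen: the high part of $m+i$ is $Q$ or $Q+1$, and the increment $Q\mapsto Q+1$ (a trailing $(k-1)$-run becoming zeros plus one raised digit) destroys at most the single occurrence ending at the raised digit. Both arguments are sound and elementary. The paper's is shorter, since the no-carry bound makes the increment bookkeeping unnecessary, though it must add a closing remark to cover $s(m+1)=N'>N$; yours handles $s(m+1)\geq N$ uniformly, gives the slightly stronger bound $s(m+i)\geq 2$ in the no-carry case, and its key observation --- that incrementing an integer kills at most one occurrence of $0^q$ --- is the same $\underline{x}\,b\,(k-1)^p\mapsto\underline{x}\,(b+1)\,0^p$ analysis that the paper deploys in Lemmas \ref{lem:1} and \ref{lem:3}. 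The one loose end you flagged is genuinely harmless: the degenerate case $(Q)_k=(k-1)^j$, where $Q+1$ gains a digit, cannot occur here, because $(Q)_k$ contains two occurrences of $0^q$ and hence contains zeros.
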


\begin{proof}
	Suppose $s(m+1)=N$ and write $(m+1)_k = \underline{x}\, \underline{w}\, \underline{z}$, where $|\underline{x}\, \underline{w}|_{\underline{w}} = 1$ (i.e. find the leftmost $\underline{w}$ in $(m+1)_k$). Suppose $|\underline{z}| = \lambda$. Since $|\underline{x}\, \underline{w}\, \underline{z}|_{0^q} = N$, we see that $| \underline{z}|_{0} \geq N-1$ and thus $[\underline{x}\, \underline{w}\, \underline{z}]_k \leq [\underline{x}\, \underline{w}\,(k-1)^{\lambda-N+1}\,0^{N-1}]_k$. As a result, for $1\leq i\leq n$ we have
	\[
	[\underline{x}\, \underline{w}\, \underline{z}]_k \leq m+i \leq m+n \leq m+k^{N-2} = [\underline{x}\, \underline{w}\, \underline{z}]_k + [1\,0^{N-2}]_k \leq [\underline{x}\, \underline{w}\,(k-1)^{\lambda-N+1}\,1\,0^{N-2}]_k,
	\]
	which implies $\underline{x}\,\underline{w} \triangleleft (m+i)_k$ and $s(m+i)>0$. On the other hand, if $s(m+1)= N'>N$, then $n\leq k^{N'-2}$. For the same reason, we have $s(m+i)>0$ for $1\leq i\leq n$.
\end{proof}

\begin{lemma}\label{lem:3}
	If $\underline{w}=0^q$, then for all $m_i+2\leq m \leq m_{i+1}$, we have $s(m_i+1)>s(m)$.
\end{lemma}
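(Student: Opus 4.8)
The plan is to pin down the base-$k$ structure around each $m_i$ and then confine the whole interval $[m_i+2,m_{i+1}]$ to a single ``carry block'' on which $s$ can be read off directly. First I would analyze the pair $(m_i)_k$ and $(m_i+1)_k$. Since $s(m_i)=0$, the word $(m_i)_k$ contains no occurrence of $0^q$; since $s(m_i+1)\ge 1$, incrementing $m_i$ must create a run of zeros, so $(m_i)_k$ terminates in exactly $j$ copies of $k-1$ for some $j\ge q$. Carrying then gives $(m_i+1)_k=\underline{p}\,0^j$, where $\underline{p}$ is nonempty and ends in a nonzero digit (either the incremented digit $c+1$ sitting before the trailing block, or a new leading $1$ when $(m_i)_k=(k-1)^j$). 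Moreover $\underline{p}$ contains no $0^q$, since its digits coincide with a prefix of $(m_i)_k$ up to incrementing a single nonzero-producing digit. Because the digit preceding the trailing zeros is nonzero, no occurrence of $0^q$ straddles $\underline{p}$ and $0^j$, so $s(m_i+1)=|0^j|_{0^q}=j-q+1=:r$.

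Next I would locate $m_{i+1}$ by exhibiting a convenient element of $\mathcal{N}$. Let $T$ be the integer with $(T)_k=\underline{p}\,(k-1)^j$. Then $s(T)=0$ (there are no zeros in $(k-1)^j$, none in $\underline{p}$, and no straddle), while $(T+1)_k$ again ends in at least $j\ge q$ zeros, so $s(T+1)\ge 1$; hence $T\in\mathcal{N}$. Since $j\ge 1$ gives $T>m_i+1>m_i$, minimality of $m_{i+1}$ forces $m_{i+1}\le T$. Consequently every $m$ with $m_i+2\le m\le m_{i+1}$ lies in $(v,T]$ where $v=m_i+1$, and all such $m$ share the prefix $\underline{p}$: one has $(m)_k=\underline{p}\,\underline{z}$ with $\underline{z}\in\Sigma_k^{j}$ and $\underline{z}\ne 0^j$.

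Finally I would bound $s$ on this block. For $m=[\underline{p}\,\underline{z}]_k$ as above, the same ``nonzero last digit of $\underline{p}$'' observation gives $s(m)=|\underline{z}|_{0^q}$. A short counting argument shows that any $\underline{z}\in\Sigma_k^{j}$ containing at least one nonzero digit satisfies $|\underline{z}|_{0^q}\le j-q$: the nonzero digit is contained in at least one of the $j-q+1$ length-$q$ windows of $\underline{z}$, and that window fails to be all-zero. Therefore $s(m)\le j-q<j-q+1=r=s(m_i+1)$ for every $m\in[m_i+2,m_{i+1}]$, which is exactly the assertion.

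I expect the main obstacle to be the second step, namely proving $m_{i+1}\le T$, i.e.\ that the first return to $\mathcal{N}$ cannot escape the current prefix block $\underline{p}\,\Sigma_k^j$; this is precisely what prevents the representation from acquiring a longer zero-run (and hence a value of $s$ as large as $r$) before reaching $m_{i+1}$. Care is also needed with the carry edge cases---in particular when $(m_i)_k=(k-1)^{j}$ consists entirely of digits $k-1$, so that $\underline{p}$ reduces to the single leading digit $1$---and with ruling out occurrences of $0^q$ straddling the boundary between $\underline{p}$ and its trailing block, which is what the nonzero-last-digit condition on $\underline{p}$ secures.
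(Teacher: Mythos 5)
Your proof is correct and follows essentially the same route as the paper's: the same carry analysis giving $(m_i)_k=\underline{x}\,b\,(k-1)^p$, $(m_i+1)_k=\underline{x}\,(b+1)\,0^p$ with $p\geq q$ and $0^q\not\prec\underline{x}\,b$, the same witness $T=[\underline{p}\,(k-1)^j]_k$ (the paper's $m_i+t$ with $t=[1\,0^p]_k$) placed in $\mathcal{N}$ to force $m_{i+1}\leq T$, and the same comparison of $0^q$-counts on the block $\underline{p}\,\Sigma_k^j$. The only difference is that you make explicit the window-counting bound $|\underline{z}|_{0^q}\leq j-q$ for $\underline{z}\neq 0^j$, which the paper asserts implicitly in the line ``$|(m_i+1)_k|_{0^q}>|(m)_k|_{0^q}$ for all $m_i+2\leq m\leq m_i+t$.''
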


\begin{proof}
	Since $s(m_i)=0$ and $s(m_i+1)\geq 1$, we have \[(m_i)_k=(k-1)^{p},\quad(m_i+1)_k=10^p\] or \[(m_i)_k=\underline{x}\,b\,(k-1)^p,\quad (m_i+1)_k=\underline{x}\,(b+1)\,0^p\] where $p\geq q$, $b\in\Sigma_{k-1}$, $\underline{x}\in\Sigma_k^*$ and $0^q\not\prec \underline{x}\,b$. Let $t=[1\,0^p]_k$. Then \[(m_i+t)_k=1\,(k-1)^p\ \text{or}\ \underline{x}\,(b+1)\,(k-1)^p.\] 
	Note that $|(m_i+1)_k|_{0^q} > |(m)_k|_{0^q}$ for all $m_i+2 \leq m \leq m_i+t$.
	Moreover, since $0^q\not\prec \underline{x}\,b$, then $0^q\not\prec \underline{x}\,(b+1)$ and $s(m_i+t)=0$. Combining with the fact that $s(m_i+t+1)>0$, we have $m_i+t\in \mathcal{N}$ and thus $m_{i+1}\leq m_i+t$, which implies $s(m_i+1)>s(m)$ holds for all $m_i+2\leq m \leq m_{i+1}$.
\end{proof}

\begin{proposition}\label{prop:4_1}
	Let $n\geq 2$ and $\underline{w}=0^q\ (q\geq 1)$. Then for all $N\geq \lceil\log_k(n)\rceil +2$, we have $P_{\mathbf{s}}^{(2)}(n,N)=n-1$.
\end{proposition}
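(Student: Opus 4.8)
The plan is to compute $\sharp\mathcal{G}$, where, as recorded just before the statement, Proposition \ref{prop:2} gives $P_{\mathbf{s}}^{(2)}(n,N)=\sharp\mathcal{G}$ with $\mathcal{G}=\{\underline{u}\in\mathcal{F}_{\mathbf{s}}^{(1)}(n,N)\colon 0\prec\underline{u}\}$. Thus I must count the length-$n$ factors of $\mathbf{s}$ that contain both the letter $0$ and the letter $N$ while using only letters at most $N$. Throughout I use that $N\geq\lceil\log_k n\rceil+2$ forces $n\leq k^{N-2}$, which is precisely what is needed to apply Lemma \ref{lem:2} and to keep the low-order base-$k$ bookkeeping localized.

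First I would localize such a factor. Let $\underline{u}=\mathbf{s}[h,h+n-1]\in\mathcal{G}$ and let $p\in[h,h+n-1]$ satisfy $s(p)=N$. Applying Lemma \ref{lem:2} with $m=p-1$ shows $s(p),s(p+1),\dots,s(p+n-1)>0$, so no $0$ of the window lies at or after $p$; hence the last $0$ of the window sits at a position $m<p$ with $s(m+1)\geq 1$, i.e. $m=m_i\in\mathcal{N}$. Since $m_i+1\leq h+n-1$, the letter at $m_i+1$ lies in the window, so $s(m_i+1)\leq N$; and because the transition creates a long block of zeros in the base-$k$ expansions, $s$ stays positive well past $m_i+n-1$, so the next element of $\mathcal{N}$ exceeds $h+n-1$ and $p\in(m_i,m_{i+1})$. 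Now Lemma \ref{lem:3} gives $s(p)<s(m_i+1)\leq N$ unless $p=m_i+1$; as $s(p)=N$, this forces $p=m_i+1$ and $s(m_i+1)=N$. In summary, every factor in $\mathcal{G}$ straddles the edge between $m_i$ and $m_i+1$ for some $m_i\in\mathcal{N}$ with $s(m_i+1)=N$, carrying its $0$ at $m_i$ and its unique $N$ at $m_i+1$ (such transitions exist, e.g. $m_i=k^{N+q-1}-1$).

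Next I would show the local word around such an edge is independent of $m_i$. Write $(m_i)_k=\underline{x}\,b\,(k-1)^{N+q-1}$ and $(m_i+1)_k=\underline{x}\,(b+1)\,0^{N+q-1}$ with $0^q\not\prec\underline{x}\,b$ (the case $(m_i)_k=(k-1)^{N+q-1}$ is analogous); the condition $s(m_i+1)=N$ forces the trailing zero-run to have length exactly $N+q-1$. For $0\leq j\leq n-1$ the bound $j\leq k^{N-2}-1$ guarantees that forming $m_i+1+j$ (resp. $m_i-j$) alters only the bottom $N-2$ base-$k$ digits, leaving a buffer of $q+1$ zeros (resp. $q+1$ copies of $k-1$) between the fixed high part $\underline{x}\,(b+1)$ (resp. $\underline{x}\,b$) and the varying low part. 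Since $0^q$ cannot straddle this buffer, every occurrence of $0^q$ lies either in the fixed high part (which contains none) or in the low part, which is a function of $j$ alone. Hence both $s(m_i-j)$ and $s(m_i+1+j)$ depend only on $j$ (and on $k,q,N$), so the word $\mathbf{s}[m_i-n+2,\,m_i+n-1]$ is one and the same for all qualifying $m_i$. This digit-level verification that the buffer insulates the window from the high-order digits is the crux of the argument.

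Finally I would count. The common local word has length $2n-2$; its only letter equal to $N$ sits at $m_i+1$, and all other letters are strictly less than $N$ (to the right of the edge by Lemma \ref{lem:3}, and to the left because they count occurrences of $0^q$ among at most $N-2$ digits). The elements of $\mathcal{G}$ are exactly the $n-1$ length-$n$ subwords of this word that contain the edge, namely those starting at $h=m_i-n+2,\dots,m_i$: each is a valid factor of $\mathcal{F}_{\mathbf{s}}(n,N)$ containing $0$ and $N$, and each contains exactly one $N$, located at distance $m_i+1-h\in\{1,\dots,n-1\}$ from its left end, so distinct $h$ give distinct factors. Conversely the localization step shows every element of $\mathcal{G}$ arises this way, and the independence step shows different transition points contribute the identical collection. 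Therefore $\sharp\mathcal{G}=n-1$, which is $P_{\mathbf{s}}^{(2)}(n,N)$.
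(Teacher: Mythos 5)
Your proposal is correct and follows essentially the same route as the paper's proof: both reduce the count to $\sharp\mathcal{G}$ via Proposition \ref{prop:2}, use Lemmas \ref{lem:2} and \ref{lem:3} to force the unique letter $N$ of any factor in $\mathcal{G}$ to sit at a position $m_i+1$ with $m_i\in\mathcal{N}$ and $s(m_i+1)=N$, show the local word around any such transition is independent of $m_i$, and then count the $n-1$ straddling windows. Your direct localization just reorganizes the paper's three-case analysis on $s(m_i+1)$ (the cases $s(m_i+1)\neq N$ are absorbed into the contradiction $s(p)<s(m_i+1)\leq N$), and your buffer computation spells out the paper's terser claim that $s(m_i\pm j)=s(m_{i'}\pm j)$; the only blemish is your appeal to "a long block of zeros" to get $m_{i+1}>h+n-1$, which is unnecessary since that follows at once from $m_i$ being the last zero of the window.
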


\begin{proof}
	Recall that $P_{\mathbf{s}}^{(2)}(n,N) = \sharp\mathcal{G}$. For $i\geq 1$, define \[\mathcal{G}_i=\{\mathbf{s}[h-n+1,h]\colon m_i< h\leq m_{i+1},\, h-n+1\geq 0,\,  \mathbf{s}[h-n+1,h]\in\mathcal{G}\},\] 
	and write
	\[\mathcal{G}_0=\{\mathbf{s}[h-n+1,h]\colon h\leq m_1,\, h-n+1\geq 0,\,  \mathbf{s}[h-n+1,h]\in\mathcal{G}\}.\] 
	Then $\mathcal{G}=\cup_{i\geq 0}\mathcal{G}_i$. By the definition of $\mathcal{N}$, it follows that $s(m)=0$ for $0\leq m\leq m_1$ and hence $\mathcal{G}_0 \subset \{0^n\}$. However, combining with the definition of $\mathcal{G}$, we see that  $0^n\notin\mathcal{G}$ and namely $\mathcal{G}_0=\emptyset$. Therefore, all we have to do is to find the elements in $\cup_{i\geq 1}\mathcal{G}_i$.
	\[\renewcommand\arraystretch{1.25}
	\begin{array}{*{8}{c|}c}
		\hline
		\dots & s(m_i) & \cellcolor{lightgray} s(m_i+1) & \cellcolor{lightgray}s(m_i+2) & \cellcolor{lightgray}\dots & \cellcolor{lightgray}s(m_{i+1}-1) & \cellcolor{lightgray}s(m_{i+1}) &
		s(m_{i+1}+1) & \dots \\
		\hline
		\multicolumn{1}{c|}{} & 0 & \geq 1 &  \multicolumn{3}{c|}{\in\{0,1,\dots, s(m_i+1)-1 \}} & 0 & \geq1 \\
		\hline
	\end{array}\]
	\begin{itemize}
		\item {\bf Case 1:} $s(m_i+1)<N$. If $m_i+1\leq h\leq \min\{m_i+n-1, m_{i+1}\}$, then $h-n+1\leq m_i$. Then Lemma \ref{lem:2} yields $s(j)<N$ for $h-n+1\leq j \leq m_i$, and Lemma \ref{lem:3} yields $s(j)<N$ for $m_i+1\leq j \leq h$. Thus $\mathbf{s}[h-n+1,h]\notin \mathcal{G}_i$. Moreover, if $m_i+n\leq h\leq m_{i+1}$, the same result can be obtained directly by Lemma \ref{lem:3}. As a result, $\mathcal{G}_i=\emptyset$.
		
		\item {\bf Case 2:} $s(m_i+1)>N$. Since $s(m_{i+1})=0$, it follows from Lemma \ref{lem:2} that $m_i+n\leq m_{i+1}$. If $m_i+1\leq h\leq m_i+n$, then $s(m_i+1)\prec \mathbf{s}[h-n+1,h]$, which implies $\mathbf{s}[h-n+1,h] \notin \mathcal{G}_i$. Now consider $m_i+n+1\leq h \leq m_{i+1}$. If there does not exists $0\prec\mathbf{s}[h-n+1,h]$, then $\mathbf{s}[h-n+1,h]\notin \mathcal{G}_i$. Otherwise, if there is an $h'$ with $h-n+1\leq h'\leq h$ such that $s(h')=0$, then by the definition of $m_i$ we have $s(j)=0$ for all $h'\leq j\leq h$. However, Lemma \ref{lem:2} implies that $s(j)<N$ for all $h-n+1\leq j\leq h'$. Hence, we obtain that $\mathcal{G}_i=\emptyset$.
		
		\item {\bf Case 3:} $s(m_i+1)=N$. Similarly, we have $m_i+n\leq m_{i+1}$.
		Combining Lemma \ref{lem:3} and Lemma  \ref{lem:2},
		\[\begin{cases}
			N\not\prec\mathbf{s}[h-n+1,h], & \text{ if } m_i+n+1\leq h\leq m_{i+1},\\
			0\not\prec\mathbf{s}[h-n+1,h], & \text{ if }h =m_i+n.
		\end{cases}\]
		If $m_i+1\leq h\leq m_i+n-1$, then
		\[\begin{cases}
			s(m)<N, & \text{ if } h-n+1\leq m\leq m_i-1,\\
			s(m)=0, & \text{ if } m = m_i,\\
			s(m)=N, & \text{ if } m = m_i+1,\\
			s(m)<N, & \text{ if } m_i+2\leq m \leq h.
		\end{cases}
		\]
		Therefore, $\mathbf{s}[h-n+1,h]\in \mathcal{G}_i$. Since $N$ is unique in $\mathbf{s}[h-n+1,h]$, then different $h$ must yield different subword, which implies $\sharp\mathcal{G}_i=n-1$ (Note that $s(m_i+1)=N$ ensures $m_i+1\geq k^{N-1}$ and thus $h-n+1\geq m_i-k^{N-2}+2 >0$). 
	\end{itemize}
	Next, for any $i$ and $i^{\prime}$ with $\sharp\mathcal{G}_i=\sharp\mathcal{G}_{i^\prime}=n-1$, we shall show that $\mathcal{G}_i=\mathcal{G}_{i^{\prime}}$. From the above discussion, we see that 
	\begin{align*}
		(m_{i})_k & =(k-1)^{q+N-1}\text{ or }\underline{x}\,b\,(k-1)^{q+N-1},\\
		(m_{i^{\prime}})_k & =(k-1)^{q+N-1}\text{ or }\underline{y}\,b^{\prime}\,(k-1)^{q+N-1},
	\end{align*}
	where $\underline{x},\,\underline{y}\in\Sigma_k^*$, $b,\,b^{\prime}\in\Sigma_{k-1}$, $0^q\not\prec\underline{x}b$ and $0^q\not\prec\underline{y}b^{\prime}$. Then $s(m_i\pm j)=s(m_{i^{\prime}}\pm j)$ for all $0\leq j \leq n-1$. This means $\mathbf{s}[m_i+j-n+1, m_i+j]=\mathbf{s}[m_{i^{\prime}}+j-n+1, m_{i^{\prime}}+j]$ for all $1\leq j\leq n-1$. Therefore, $\mathcal{G}_i=\mathcal{G}_{i^{\prime}}$. Finally, we conclude that $\mathcal{G}=\mathcal{G}_i$ for some $i\geq 0$ with $\mathcal{G}_i\neq \emptyset$ and $\sharp\mathcal{G}=n-1$.
\end{proof}

Now we consider the case $\underline{w}=(k-1)^{q}$. There are also infinitely many $m$ with $s(m)\geq 1$ and $s(m+1)=0$. Define
\[\mathcal{N}'=\{m\geq 0\colon s(m)\geq 1,\, s(m+1)=0\}
\]
and write its elements in ascending order, i.e. $m_1<m_2<m_3<\dotsb$. 

\begin{lemma}\label{lem:4}
	Let $\underline{w}=(k-1)^q$, $N\geq 0$ and $n\leq k^{N-2}$. For all $m>0$ satisfying $s(m)\geq N$,  we have $s(m-i)>0$ for $0\leq i \leq n-1$.
\end{lemma}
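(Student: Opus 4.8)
The plan is to mirror the proof of Lemma \ref{lem:2}, exchanging the roles of the digit $0$ and the digit $k-1$ and, correspondingly, replacing the forward shift $m\mapsto m+i$ by the backward shift $m\mapsto m-i$. As in that lemma it suffices to treat the case $s(m)=N$ exactly: if $s(m)=N'>N$, then $n\leq k^{N-2}\leq k^{N'-2}$ and the same argument goes through with $N'$ in place of $N$. Note also that $s(m)\geq N$ forces $m\geq k^{N-1}-1>n-1$, so $m-i>0$ throughout.

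First I would locate the leftmost occurrence of $\underline{w}=(k-1)^q$ in $(m)_k$, writing $(m)_k=\underline{x}\,\underline{w}\,\underline{z}$ with $|\underline{x}\,\underline{w}|_{(k-1)^q}=1$ and $\lambda:=|\underline{z}|$. Since the prefix $\underline{x}\,\underline{w}$ carries exactly one of the $N$ occurrences, the remaining $N-1$ occurrences force $|\underline{z}|_{k-1}\geq N-1$. The crucial difference with Lemma \ref{lem:2} is the direction of the estimate on $\underline{z}$: there one bounds $\underline{z}$ from above to keep $m+i$ from overflowing the prefix, whereas here I bound it from below to keep $m-i$ from underflowing it. Concretely, pushing the digits $k-1$ into the low-order positions gives $[\underline{z}]_k\geq[0^{\lambda-N+1}(k-1)^{N-1}]_k=k^{N-1}-1$, and hence $m\geq[\underline{x}\,\underline{w}\,0^{\lambda-N+1}(k-1)^{N-1}]_k=[\underline{x}\,\underline{w}\,0^{\lambda}]_k+(k^{N-1}-1)$.

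With this in hand, the core of the argument is the inequality chain, valid for all $0\leq i\leq n-1\leq k^{N-2}-1$:
\[[\underline{x}\,\underline{w}\,0^{\lambda}]_k\leq m-(k^{N-2}-1)\leq m-i\leq m\leq[\underline{x}\,\underline{w}\,(k-1)^{\lambda}]_k,\]
where the first inequality uses the lower bound on $m$ together with $k^{N-1}-1\geq k^{N-2}-1$, and the last uses $[\underline{z}]_k\leq k^{\lambda}-1$. Since $m-i$ then lies between the smallest and the largest integer whose base-$k$ representation begins with $\underline{x}\,\underline{w}$, we obtain $\underline{x}\,\underline{w}\triangleleft(m-i)_k$, so $(k-1)^q=\underline{w}\prec(m-i)_k$ and therefore $s(m-i)=|(m-i)_k|_{(k-1)^q}>0$, as required.

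The step I expect to be the main obstacle is the bookkeeping behind the estimate $|\underline{z}|_{k-1}\geq N-1$: occurrences of $(k-1)^q$ that straddle the boundary between $\underline{w}$ and $\underline{z}$ must be handled carefully so that at least $N-1$ genuine copies of the digit $k-1$ are guaranteed to remain inside $\underline{z}$ (the choice of the \emph{leftmost} occurrence is what makes this work, since it pins the single counted copy to the prefix). Once that lower bound is secured, the inequality chain — which is exactly the statement that subtracting $i<k^{N-2}$ never lets the borrow propagate past $\underline{w}$, so the prefix $\underline{x}\,\underline{w}$ survives — is routine.
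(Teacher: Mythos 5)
Your proposal is correct and follows essentially the same route as the paper's proof: decompose $(m)_k=\underline{x}\,\underline{w}\,\underline{z}$ at the leftmost occurrence of $(k-1)^q$, deduce $|\underline{z}|_{k-1}\geq N-1$ so that $[\underline{z}]_k\geq k^{N-1}-1$, and then sandwich $m-i$ between $[\underline{x}\,\underline{w}\,0^{\lambda}]_k$ and $[\underline{x}\,\underline{w}\,(k-1)^{\lambda}]_k$ to conclude the borrow never reaches the prefix, with the same reduction of $s(m)=N'>N$ to the exact case. The only cosmetic difference is that you subtract $k^{N-2}-1$ where the paper subtracts $k^{N-2}$ and writes the resulting representation explicitly.
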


\begin{proof}
	Suppose $s(m)=N$ and write $(m)_k = \underline{x}\, \underline{w}\, \underline{z}$, where $|\underline{x}\, \underline{w}|_{\underline{w}} = 1$ (i.e. find the leftmost $\underline{w}$ in $(m)_k$). Suppose $|\underline{z}| = \lambda$. Since $|\underline{x}\, \underline{w}\, \underline{z}|_{(k-1)^q} = N$, we see that $| \underline{z}|_{k-1} \geq N-1$ and thus $[\underline{x}\, \underline{w}\, \underline{z}]_k \geq [\underline{x}\, \underline{w}\,0^{\lambda-N+1}\,(k-1)^{N-1}]_k$. As a result, for $0\leq i\leq n-1$ we have
	\[
	[\underline{x}\, \underline{w}\, \underline{z}]_k \geq m-i \geq m-n+1 \geq m-k^{N-2} = [\underline{x}\, \underline{w}\, \underline{z}]_k - [1\,0^{N-2}]_k \geq [\underline{x}\, \underline{w}\,0^{\lambda-N+1}\,(k-2)\,(k-1)^{N-2}]_k,
	\]
	which implies $\underline{x}\,\underline{w} \triangleleft (m-i)_k$ and $s(m-i)>0$. Moreover, if $s(m)= N'>N$, it also holds that $n\leq k^{N'-2}$. Then for the same reason, we have $s(m-i)>0$ for $0\leq i\leq n-1$.
\end{proof}

\begin{lemma}\label{lem:5}
	If $\underline{w}=(k-1)^q$, then for all $m_{i-1}+1\leq m \leq m_{i}-1$, we have $s(m_i)>s(m)$.
\end{lemma}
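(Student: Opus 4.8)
The statement is the exact mirror image of Lemma~\ref{lem:3}, with the roles of $0$ and $k-1$ (and of ``descending into a block of zeros'' versus ``ascending to a block of $(k-1)$'s'') interchanged; accordingly the plan is to run the argument of Lemma~\ref{lem:3} backwards, starting from $m_i$. First I would read off the shape of $(m_i)_k$ from the two defining conditions $s(m_i)\ge 1$ and $s(m_i+1)=0$. Since adding $1$ to $m_i$ produces a carry, we have $(m_i)_k=\underline{x}\,c\,(k-1)^p$ for some digit $c\neq k-1$ and some $p\ge 1$ (or the degenerate form $(m_i)_k=(k-1)^p$), where $(k-1)^p$ is the maximal trailing run of $(k-1)$'s and $(m_i+1)_k=\underline{x}\,(c+1)\,0^p$. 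The condition $s(m_i+1)=0$, i.e. $(k-1)^q\not\prec\underline{x}\,(c+1)\,0^p$, forces $(k-1)^q\not\prec\underline{x}$; since moreover the digit $c\neq k-1$ breaks every run at the boundary, all occurrences of $(k-1)^q$ in $(m_i)_k$ lie in the trailing block, giving the exact count $s(m_i)=p-q+1$ and in particular $p\ge q$.

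Next I would establish the key estimate on the block $B:=\{\,m:[\underline{x}\,c\,0^p]_k\le m\le m_i\,\}$ of length $k^p$. Every $m\in B$ has $(m)_k=\underline{x}\,c\,\underline{d}$ with $\underline{d}\in\Sigma_k^{p}$, and because $(k-1)^q\not\prec\underline{x}$ and the digit $c<k-1$ isolates the suffix, $s(m)=|\underline{d}|_{(k-1)^q}$ counts only the occurrences inside $\underline{d}$. For $m<m_i$ we have $\underline{d}\neq(k-1)^p$, so $\underline{d}$ contains a digit different from $k-1$; a short run-counting argument (the number of occurrences of $(k-1)^q$ in a length-$p$ word is maximised by a single full run and hence is at most $p-q$ once that run is broken) then gives $|\underline{d}|_{(k-1)^q}\le p-q<p-q+1=s(m_i)$. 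Hence $s(m)<s(m_i)$ for every $m\in B$ with $m<m_i$.

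Finally I would locate $m_{i-1}$ relative to $B$ by showing $M:=[\underline{x}\,c\,0^p]_k-1\in\mathcal{N}'$. The successor $M+1=[\underline{x}\,c\,0^p]_k$ satisfies $s(M+1)=0$ (again from $(k-1)^q\not\prec\underline{x}$, $c<k-1$, and the trailing zeros), while subtracting $1$ creates a fresh trailing run of $(k-1)$'s and yields $s(M)\ge 1$; thus $M\in\mathcal{N}'$ and $M<m_i$, so $M\le m_{i-1}$ and therefore $[m_{i-1}+1,\,m_i-1]\subseteq B$. Combining this with the previous step gives $s(m_i)>s(m)$ on the whole range, as required. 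The main obstacle is the borrow bookkeeping in this last step when $c=0$ (and, more generally, when $\underline{x}$ ends in zeros): subtracting $1$ from $[\underline{x}\,0^{p+1}]_k$ propagates the borrow through the trailing zeros of $\underline{x}$, so one must track where the borrow stops, writing $\underline{x}=\underline{y}\,e\,0^{r}$ with $e\neq 0$, to confirm that $(M)_k=\underline{y}\,(e-1)\,(k-1)^{r+p+1}$ still ends in a run of at least $q$ copies of $k-1$ and that $(k-1)^q$ does not appear earlier. The degenerate case $(m_i)_k=(k-1)^p$ (where no $m_{i-1}$ lies inside the block) is immediate, since there $s(m)<s(m_i)$ holds for every $m<m_i$.
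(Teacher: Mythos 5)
Your proof is correct and takes essentially the same route as the paper's: your block $B$ is exactly the paper's interval $\{m : m_i-t\leq m\leq m_i\}$ with $t=[(k-1)^p]_k$, and your $M=[\underline{x}\,c\,0^p]_k-1$ is exactly the paper's $m_i-t-1$, which both arguments place in $\mathcal{N}'$ to conclude $m_{i-1}\geq M$ and hence confine $[m_{i-1}+1,m_i-1]$ to the block where the count is strictly smaller. You merely supply slightly more detail than the paper (the exact count $s(m_i)=p-q+1$, the run-counting bound $p-q$, and the borrow bookkeeping for $s(M)\geq 1$).
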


\begin{proof}
	Since $s(m_i)\geq 1$ and $s(m_i+1)=0$, we have \[(m_i)_k=(k-1)^{p},\quad(m_i+1)_k=10^p\] or \[(m_i)_k=\underline{x}\,b\,(k-1)^p,\quad (m_i+1)_k=\underline{x}\,(b+1)\,0^p\] where $p\geq q$, $b\in\Sigma_{k-1}$, $\underline{x}\in\Sigma_k^*$ and $(k-1)^q\not\prec \underline{x}\,(b+1)$. If $(m_i)_k=(k-1)^{p}$, it is clear that $|(m_i)_k|_{(k-1)^q} > |(m)_k|_{(k-1)^q}$ holds for all $0\leq m \leq m_i-1$, and the desired result follows.
	
	In the other case, let $t=[(k-1)^p]_k$. Then $(m_i-t)_k= \underline{x}\,b\,0^p$.
	Note that $|(m_i)_k|_{(k-1)^q} > |(m)_k|_{(k-1)^q}$ for all $m_i-t \leq m \leq m_i-1$.
	Moreover, since $(k-1)^q\not\prec \underline{x}\,(b+1)$, then $(k-1)^q\not\prec \underline{x}\,b$ and $s(m_i-t)=0$. Combining with the fact that $s(m_i-t-1)>0$, we have $m_i-t-1\in \mathcal{N}'$ and thus $m_{i-1}\geq m_i-t-1$, which implies $s(m_i)>s(m)$ holds for all $m_{i-1}+1\leq m \leq m_{i}-1$.
\end{proof}

\begin{proposition}\label{prop:4_2}
	Let $n\geq 2$, $\underline{w}=(k-1)^q\ (q\geq 1)$ and $(k,\underline{w})\neq (2,1)$. Then for all $N\geq \lceil\log_k(n)\rceil +2$, we have $P_{\mathbf{s}}^{(2)}(n,N)=n-1$.
\end{proposition}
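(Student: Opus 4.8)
The plan is to run the proof of Proposition \ref{prop:4_1} in reverse: for $\underline{w}=(k-1)^q$ the descent set $\mathcal{N}'$ together with Lemmas \ref{lem:4} and \ref{lem:5} plays the role that the ascent set $\mathcal{N}$ and Lemmas \ref{lem:2} and \ref{lem:3} played for $\underline{w}=0^q$, once one organizes the factors by their left endpoint rather than their right endpoint. Since $n\ge2$, Proposition \ref{prop:2} reduces the task to computing $\sharp\mathcal{G}$, where $\mathcal{G}=\{\underline{u}\in\mathcal{F}_{\mathbf{s}}^{(1)}(n,N)\colon 0\prec\underline{u}\}$ is the set of length-$n$ factors that contain the letter $0$ and whose largest letter is $N$; I will show $\sharp\mathcal{G}=n-1$. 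The orientation is dictated by Lemma \ref{lem:4}: if $s(m)\ge N$ then $s(m-j)>0$ for $0\le j\le n-1$, so inside any factor of $\mathcal{G}$ every $0$ lies strictly to the right of every $N$. Hence the leftmost $0$ of such a factor occupies a position $m_i+1$ with $m_i\in\mathcal{N}'$, and the factor contains the letter at $m_i$.

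Writing $\mathcal{N}'=\{m_1<m_2<\cdots\}$, I would set $\mathcal{G}=\bigcup_{i\ge1}\mathcal{G}_i$, where $\mathcal{G}_i$ collects the factors of $\mathcal{G}$ whose left endpoint lies in the block $(m_{i-1},m_i]$ (the initial segment contributing no factor of $\mathcal{G}$), and split on the value $s(m_i)$. The decisive case is $s(m_i)=N$. Then Lemma \ref{lem:4} at $m_i$ gives $s(m_i-j)>0$ for $0\le j\le n-1$, which forces $m_i-m_{i-1}>n$ and places the left endpoints of the $n-1$ factors $\mathbf{s}[h-n+1,h]$ with $m_i+1\le h\le m_i+n-1$ inside $(m_{i-1},m_i]$; Lemma \ref{lem:5} bounds the entries on $[m_{i-1}+1,m_i-1]$ strictly below $s(m_i)=N$, while a second application of Lemma \ref{lem:4} keyed to the zero at $m_i+1$ bounds the entries on $[m_i+2,m_i+n]$ strictly below $N$. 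Thus each such factor contains the $0$ at $m_i+1$ and exactly one letter $N$, at $m_i$; this uniqueness makes the $n-1$ factors pairwise distinct, so $\sharp\mathcal{G}_i=n-1$. For $s(m_i)<N$ and $s(m_i)>N$ I would argue, as in Cases~1 and~2 of Proposition \ref{prop:4_1}, that no factor with left endpoint in $(m_{i-1},m_i]$ meets both a $0$ and an $N$. Finally, every $m_i$ with $s(m_i)=N$ has $(m_i)_k$ ending in $(k-1)^{q+N-1}$ preceded by a digit $<k-1$ at which $(k-1)^q$ does not recur, so the profile of $\mathbf{s}$ on $[m_i-n+1,m_i+n-1]$ is the same for all of them; hence all nonempty $\mathcal{G}_i$ consist of the same $n-1$ factors and $\sharp\mathcal{G}=n-1$.

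The main obstacle is not Case~3 but getting the orientation and the Case~1/Case~2 exclusions right. Unlike the $\underline{w}=0^q$ situation, where forward positivity lets one index factors by their right endpoint and keep each factor within a single block, here one must index by the left endpoint so that the backward positivity of Lemma \ref{lem:4} confines the relevant factors to one block; getting this reflection right is what makes the per-block count clean. Excluding spurious contributions when $s(m_i)\ne N$ is delicate because the zeros of $\mathbf{s}$ need not be isolated, so one must combine the backward positivity of Lemma \ref{lem:4} with the one-sided maximality of Lemma \ref{lem:5} on both sides of each descent; this is also where the hypothesis $(k,\underline{w})\ne(2,1)$ enters, keeping a letter $N$ and a letter $0$ apart via the block structure. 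The distinctness of the $n-1$ factors and the coincidence of the nonempty $\mathcal{G}_i$ are then routine.
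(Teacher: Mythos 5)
Your proposal is correct and follows essentially the same route as the paper's proof: the same reduction of $P_{\mathbf{s}}^{(2)}(n,N)$ to $\sharp\mathcal{G}$ via Proposition \ref{prop:2}, the same block decomposition of $\mathcal{G}$ along $\mathcal{N}'$ indexed by left endpoints, the same three-way case split on $s(m_i)$ combining Lemma \ref{lem:4} (backward positivity) with Lemma \ref{lem:5} (one-sided maximality), the same count of $n-1$ pairwise distinct factors in the case $s(m_i)=N$ via uniqueness of the letter $N$, and the same identification of all nonempty blocks through the digit form of $(m_i)_k$. One small point in your favor: enumerating the Case-3 factors by right endpoints $m_i+1\leq h\leq m_i+n-1$ gives exactly the $n-1$ factors containing both the $N$ at $m_i$ and the $0$ at $m_i+1$, whereas the paper's stated range $m_i-n+1\leq h\leq m_i$ includes one extra window that misses the $0$.
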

\begin{proof}
	Let
	\[\mathcal{G}'_i=\{\mathbf{s}[h,h+n-1]\colon m_{i-1}< h\leq m_{i},\,  \mathbf{s}[h,h+n-1]\in\mathcal{G}\}\] 
	for $i\geq 1$, and
	\[\mathcal{G}'_0=\{\mathbf{s}[h,h+n-1]\colon 0\leq h\leq m_1,\,  \mathbf{s}[h,h+n-1]\in\mathcal{G}\}.\] 
	Hence, $\mathcal{G}=\cup_{i\geq 0}\mathcal{G}'_i$.
	By the definition of $\mathcal{N}'$, we have $s(h)=0$ for $0\leq h\leq m_1$. Combining with Lemma \ref{lem:4}, however, we see that $N\not\prec\mathbf{s}[h,h+n-1]$ and thus $\mathbf{s}[h,h+n-1]\notin\mathcal{G}'_0$.  Therefore, all we have to do is to find the elements in $\cup_{i\geq 1}\mathcal{G}'_i$.
	\[\renewcommand\arraystretch{1.25}
	\begin{array}{*{8}{c|}c}
		\hline
		\dots & s(m_{i-1}) & \cellcolor{lightgray} s(m_{i-1}+1) & \cellcolor{lightgray}s(m_{i-1}+2) & \cellcolor{lightgray}\dots & \cellcolor{lightgray}s(m_{i}-1) & \cellcolor{lightgray}s(m_{i}) &
		s(m_{i}+1) & \dots \\
		\hline
		\multicolumn{1}{c|}{} & \geq1 & 0 &  \multicolumn{3}{c|}{\in\{0,1,\dots, s(m_i)-1 \}} & \geq1 & 0 \\
		\hline
	\end{array}\]
	\begin{itemize}
		\item {\bf Case 1:} $s(m_i)<N$. If $\max\{m_i-n+2, m_{i-1}+1\}\leq h\leq m_i$, then $h+n-1\geq m_i+1$. Then Lemma \ref{lem:4} yields $s(j)<N$ for $m_i+1\leq j \leq h+n-1$, and Lemma \ref{lem:5} yields $s(j)<N$ for $h\leq j \leq m_i$. Thus $\mathbf{s}[h,h+n-1]\notin \mathcal{G}'_i$. Moreover, if $m_{i-1}+1\leq h\leq m_i-n+1$, the same result can be obtained directly by Lemma \ref{lem:5}. As a result, $\mathcal{G}'_i=\emptyset$.
		
		\item {\bf Case 2:} $s(m_i)>N$. Since $s(m_{i-1}+1)=0$, it follows from Lemma \ref{lem:4} that $m_{i-1}+1\leq m_{i}-n$. If $m_i-n+1\leq h\leq m_i$, then $s(m_i)\prec \mathbf{s}[h,h+n-1]$, which implies $\mathbf{s}[h,h+n-1] \notin \mathcal{G}'_i$. Now consider $m_{i-1}+1\leq h \leq m_{i}-n$. If there does not exists $0\prec\mathbf{s}[h,h+n-1]$, then $\mathbf{s}[h,h+n-1]\notin \mathcal{G}'_i$. Otherwise, if there is an $h'$ with $h\leq h'\leq h+n-1$ such that $s(h')=0$, then by the definition of $m_i$, we have $s(j)=0$ for all $h\leq j\leq h'$. However, Lemma \ref{lem:4} implies that $s(j)<N$ for all $h'\leq j\leq h+n-1$. Hence, we obtain that $\mathcal{G}'_i=\emptyset$.
		
		\item {\bf Case 3:} $s(m_i)=N$. Similarly, we have $m_{i-1}+1\leq m_{i}-n$.
		Combining Lemma \ref{lem:5} and Lemma  \ref{lem:4},
		\[\begin{cases}
			N\not\prec\mathbf{s}[h,h+n-1], & \text{ if } m_{i-1}+1\leq h\leq m_{i}-n-1,\\
			0\not\prec\mathbf{s}[h,h+n-1], & \text{ if }h =m_i-n.
		\end{cases}\]
		If $m_i-n+1\leq h\leq m_i$, then
		\[\begin{cases}
			s(m)<N, & \text{ if } h\leq m\leq m_i-1,\\
			s(m)=N, & \text{ if } m = m_i,\\
			s(m)=0, & \text{ if } m = m_i+1,\\
			s(m)<N, & \text{ if } m_i+2\leq m \leq h+n-1.
		\end{cases}
		\]
		Therefore, $\mathbf{s}[h,h+n-1]\in \mathcal{G}'_i$. Since $N$ is unique in $\mathbf{s}[h,h+n-1]$, then different $h$ must yield different subword, which implies $\sharp\mathcal{G}'_i=n-1$. 
	\end{itemize}
	Next, for any $i$ and $i^{\prime}$ with $\sharp\mathcal{G}'_i=\sharp\mathcal{G}'_{i^\prime}=n-1$, we shall show that $\mathcal{G}'_i=\mathcal{G}'_{i^{\prime}}$. From the above discussion, we see that 
	\begin{align*}
		(m_{i})_k & =(k-1)^{q+N-1}\text{ or }\underline{x}\,b\,(k-1)^{q+N-1},\\
		(m_{i^{\prime}})_k & =(k-1)^{q+N-1}\text{ or }\underline{y}\,b^{\prime}\,(k-1)^{q+N-1},
	\end{align*}
	where $\underline{x},\,\underline{y}\in\Sigma_k^*$, $b,\,b^{\prime}\in\Sigma_{k-1}$, $0^q\not\prec\underline{x}b$ and $0^q\not\prec\underline{y}b^{\prime}$. Then $s(m_i\pm j)=s(m_{i^{\prime}}\pm j)$ for all $0\leq j \leq n-1$. This means $\mathbf{s}[m_i+j-n+1, m_i+j]=\mathbf{s}[m_{i^{\prime}}+j-n+1, m_{i^{\prime}}+j]$ for all $1\leq j\leq n-1$. Therefore, $\mathcal{G}'_i=\mathcal{G}'_{i^{\prime}}$. Finally, we conclude that $\mathcal{G}=\mathcal{G}'_i$ for some $i\geq 0$ with $\mathcal{G}'_i\neq \emptyset$ and $\sharp\mathcal{G}=n-1$.
\end{proof}

We summarize the cases $\underline{w}=0^q$ and $(k-1)^q$ in the following result.
\begin{proposition}\label{prop:4}
	Let $n\geq 1$ and $\underline{w}\in\Sigma_k^{*}\backslash\{\varepsilon\}$. If $\underline{w}\in\{0\}^*\cup\{k-1\}^*$ and $(k,\underline{w})\neq (2,1)$, then for all $N\geq \lceil\log_k(n)\rceil +2$, we have $P_{\mathbf{s}}^{(2)}(n,N)=n-1$.
\end{proposition}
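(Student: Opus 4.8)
The plan is to recognize that Proposition \ref{prop:4} is merely a consolidation of the two cases already settled in Proposition \ref{prop:4_1} and Proposition \ref{prop:4_2}, together with the degenerate case $n=1$, so that no new combinatorial work is needed. The key observation is that the hypothesis $\underline{w}\in\{0\}^*\cup\{k-1\}^*\backslash\{\varepsilon\}$ forces $\underline{w}$ to be a power of a single letter, and precisely such $\underline{w}$ were analysed in the two preceding propositions; thus the whole task reduces to assembling those results correctly.

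First I would dispose of the base case $n=1$, where the target value is $n-1=0$. From \eqref{eq: n=1} we know $P_{\mathbf{s}}(1,N)=N+1$ for all admissible $N$, so taking first and second differences in the variable $N$ gives $P_{\mathbf{s}}^{(1)}(1,N)=(N+1)-N=1$ and hence $P_{\mathbf{s}}^{(2)}(1,N)=1-1=0=n-1$, as required. This agrees with the computation already recorded at the start of the proof of Proposition \ref{prop:6}.

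For $n\geq 2$ I would split according to which letter $\underline{w}$ is a power of. Since $\underline{w}\neq\varepsilon$ and $\underline{w}\in\{0\}^*\cup\{k-1\}^*$, exactly one of the following holds: $\underline{w}=0^q$ for some $q\geq 1$, or $\underline{w}=(k-1)^q$ for some $q\geq 1$. In the first case Proposition \ref{prop:4_1} gives $P_{\mathbf{s}}^{(2)}(n,N)=n-1$ for all $N\geq\lceil\log_k(n)\rceil+2$. In the second case, the hypothesis explicitly excludes $(k,\underline{w})=(2,1)$, so Proposition \ref{prop:4_2} applies verbatim and yields the same value. Combining the two sub-cases with the base case completes the argument.

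The proof is therefore essentially bookkeeping, and the only point requiring care—rather than a genuine obstacle—is to verify that the case split is exhaustive and that the omitted pair $(2,1)$ is legitimately set aside. When $k=2$ one has $0\neq k-1=1$, so $\{0\}^*$ and $\{1\}^*$ meet only in $\{\varepsilon\}$, and the single excluded word $\underline{w}=1$ (with $k=2$) is exactly the case handled separately in Proposition \ref{prop:6}. Hence, once Proposition \ref{prop:4} and Proposition \ref{prop:6} are combined, the assertion of Theorem \ref{thm:1} for $\underline{w}\in\{0\}^*\cup\{k-1\}^*$ is fully covered.
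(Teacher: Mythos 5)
Your proposal is correct and follows essentially the same route as the paper's own proof: dispose of $n=1$ via \eqref{eq: n=1} (giving $P_{\mathbf{s}}^{(1)}(1,N)=1$ and $P_{\mathbf{s}}^{(2)}(1,N)=0$), then for $n\geq 2$ invoke Proposition \ref{prop:4_1} for $\underline{w}=0^q$ and Proposition \ref{prop:4_2} for $\underline{w}=(k-1)^q$ with $(k,\underline{w})\neq(2,1)$. Your extra remark verifying that the case split is exhaustive and that the excluded pair $(2,1)$ is exactly the case of Proposition \ref{prop:6} is sound but not needed beyond what the paper does.
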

\begin{proof}
	If $n=1$, as mentioned in \eqref{eq: n=1} at the beginning of section \ref{section 0q k-1q}, we see that $P_{\mathbf{s}}(1,N)=N+1$. Thus $P_{\mathbf{s}}^{(1)}(1,N)=1$ and $P_{\mathbf{s}}^{(2)}(1,N)=0$. Combining with Proposition \ref{prop:4_1} and Proposition \ref{prop:4_2}, we are done.
\end{proof}

\subsubsection{The case \texorpdfstring{$(k,\underline{w})\ne (2,1)$ and $\underline{w}\notin \{0\}^*\cup\{k-1\}^*$}{(k,w) does not equal to (2,1) and w is not power of 0 or (k-1)}.} \label{section other}
In this case, we first prove a balance property (Lemma \ref{lem:1}) for $\mathbf{s}$ which indicates that the difference sequence of $\mathbf{s}$ taking values in $\{0,\pm 1\}$. Then by using this balance property, we show that $P_{\mathbf{s}}^{(1)}(n,N)$ does not depend on $N$; see Proposition \ref{prop:3}. Consequently, we give the precise value of $P_{\mathbf{s}}^{(2)}(n,N)$ in Theorem \ref{thm:1}.

\begin{lemma}\label{lem:1}
	Let $\underline{w}\in\Sigma_k^{*}\backslash\{\varepsilon\}$. If $\underline{w}\notin\{0\}^*\cup\{k-1\}^*$, then for all $m\geq 0$, $|s(m)-s(m+1)|\leq 1$.
\end{lemma}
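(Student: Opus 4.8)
The plan is to analyze how the base-$k$ representation changes under the successor map $m\mapsto m+1$ and to track which occurrences of $\underline w$ are created or destroyed by the carry. Writing $(m)_k=\underline{x}\,c\,(k-1)^j$, where $c\in\Sigma_{k-1}$ (so $c\neq k-1$), $j\geq 0$, and $\underline x\in\Sigma_k^*$ is the maximal prefix unaffected by the carry, one has $(m+1)_k=\underline{x}\,(c+1)\,0^j$. (The exceptional case $(m)_k=(k-1)^\ell$, for which $(m+1)_k=1\,0^\ell$, will be treated separately at the end.) Set $t=|\underline x|$, so the two words agree on positions $0,\dots,t-1$ and differ exactly on positions $t,\dots,t+j$. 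Every occurrence of $\underline w$ whose range avoids $[t,t+j]$ lies inside $\underline x$ and hence contributes equally to $s(m)$ and $s(m+1)$; these common occurrences cancel and $s(m+1)-s(m)=B_{m+1}-B_m$, where $B_m$ (resp.\ $B_{m+1}$) counts the occurrences of $\underline w$ in $(m)_k$ (resp.\ $(m+1)_k$) meeting $[t,t+j]$. It therefore suffices to prove $B_m\leq 1$ and $B_{m+1}\leq 1$.

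First I would show that every such affected occurrence must cover position $t$: an affected occurrence avoiding position $t$ would lie entirely within positions $t+1,\dots,t+j$, reading $(k-1)^q$ in $(m)_k$ and $0^q$ in $(m+1)_k$, which would force $\underline w=(k-1)^q$ or $\underline w=0^q$, both excluded by $\underline w\notin\{0\}^*\cup\{k-1\}^*$. The main step is then to pin down the unique admissible starting position. An occurrence covering position $t$ starts at $i=t-r$ for some $0\leq r\leq q-1$, and in $(m)_k$ it forces $w_r=c$ together with $\underline w[r+1,q-1]=(k-1)^{q-1-r}$. Letting $r_0$ be the index of the last letter of $\underline w$ different from $k-1$ (which exists since $\underline w\neq(k-1)^q$), the suffix condition forces $r\geq r_0$, while for $r>r_0$ the letter $w_r$ equals $k-1\neq c$, contradicting $w_r=c$; hence $r=r_0$ is the only possibility and $B_m\leq 1$. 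Symmetrically, writing $r_1$ for the index of the last letter of $\underline w$ different from $0$ (which exists since $\underline w\neq 0^q$) and using $c+1\geq 1\neq 0$, the only candidate start in $(m+1)_k$ is $i=t-r_1$, so $B_{m+1}\leq 1$.

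Combining the two bounds gives $|s(m+1)-s(m)|=|B_{m+1}-B_m|\leq 1$. Finally, in the exceptional case $(m)_k=(k-1)^\ell$ we have $s(m)=0$, since any occurrence of $\underline w$ there would read $(k-1)^q$; and in $(m+1)_k=1\,0^\ell$ any occurrence lying in the trailing zeros would read $0^q=\underline w$, which is impossible, so the only possible occurrence starts at position $0$ and $s(m+1)\leq 1$, whence again $|s(m+1)-s(m)|\leq 1$. I expect the uniqueness-of-the-start argument to be the crux: it is precisely there that both exclusions $\underline w\neq(k-1)^q$ and $\underline w\neq 0^q$ enter, and it is what prevents the overlapping self-occurrences of $\underline w$ that can in principle cluster around the carry from producing a jump larger than $1$.
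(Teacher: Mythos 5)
Your proof is correct and takes essentially the same route as the paper's: both isolate the exceptional case $(m)_k=(k-1)^\ell$, write $(m)_k=\underline{x}\,c\,(k-1)^j$, $(m+1)_k=\underline{x}\,(c+1)\,0^j$, cancel the occurrences of $\underline{w}$ inside the common prefix $\underline{x}$, and reduce the lemma to the fact that at most one occurrence of $\underline{w}$ can meet the digits changed by the carry in each of the two representations (the paper's bounds $|\underline{x}|_{\underline{w}}\le s(m),\,s(m+1)\le|\underline{x}|_{\underline{w}}+1$). Your uniqueness-of-start argument via the indices $r_0$ and $r_1$ simply spells out, in more detail, the step that the paper asserts directly from $\underline{w}\notin\{0\}^*\cup\{k-1\}^*$.
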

\begin{proof}
	If $m=k^{\ell}-1$ for some integer $\ell\geq 0$, then $(m)_k=(k-1)^{\ell}$ and $(m+1)_k=10^{\ell}$. We have $s(m)=|(m)_k|_{\underline{w}}=0$ and $s(m+1)=|(m+1)_k|_{\underline{w}}\leq 1$. The result follows.
	
	If $m\neq k^{\ell}-1$ for all $\ell\geq 0$, then we have \[(m)_k=\underline{x}\,b\,(k-1)^p,\quad (m+1)_k=\underline{x}\,(b+1)\,0^{p}\] where $\underline{x}\in\Sigma_k^*$, $p\geq 0$ and $b\in\Sigma_{k-1}$. When $p=0$, it is easy to see that $|s(m)-s(m+1)|\leq 1$. When $p\geq 1$, since $\underline{w}\notin\{0\}^*\cup\{k-1\}^*$, we have \[|\underline{x}|_{\underline{w}}\leq s(m)\leq |\underline{x}|_{\underline{w}}+1\quad\text{and}\quad |\underline{x}|_{\underline{w}}\leq s(m+1)\leq |\underline{x}|_{\underline{w}}+1.\] If $s(m)=|\underline{x}|_{\underline{w}}+1$, then $b\,(k-1)^t\triangleright \underline{w}$ for some $t\geq 0$, which implies $s(m+1)=|\underline{x}|_{\underline{w}}$. If $s(m+1)=|\underline{x}|_{\underline{w}}+1$, then $(b+1)\,0^r\triangleright \underline{w}$ for some $r\geq 0$, which implies $s(m)=|\underline{x}|_{\underline{w}}$. In either case, we have $|s(m)-s(m+1)|\leq 1$.
\end{proof}

\begin{remark}
	When $\underline{w}\in\{0\}^*\cup\{k-1\}^*$, Lemma \ref{lem:1} does not hold. In fact, when $\underline{w}=0^{j}$ ($j\geq 1$), we have $s(k^{j+\ell})-s(k^{j+\ell}-1)=\ell$ for all $\ell\geq 1$; when $\underline{w}=(k-1)^{j}$, we have $s(k^{j+\ell}-1)-s(k^{j+\ell})=\ell$ for all $\ell\geq 1$.
\end{remark}

Before calculating $P_{\mathbf{s}}^{(1)}(n,N)$, we give an auxillary lemma. 
\begin{lemma}\label{lem: added}
	Let $\underline{\alpha}\in\Sigma_k^{*}\backslash\{\varepsilon\}$,  $\underline{w}\in\Sigma_k^{*}\backslash\{\varepsilon\}$ and $\underline{w}\notin\{0\}^*\cup\{k-1\}^*$. Suppose that $|\underline{\alpha}|_{\underline{w}}=r>0$. Write $\underline{\alpha} = \underline{x}\,\underline{w}\,\underline{z}$ with $|\underline{x}\,\underline{w}|_{\underline{w}} = 1$. Then $|\underline{z}|_0\leq |\underline{z}|-r+1$ and $|\underline{z}|_{k-1} \leq |\underline{z}|-r+1$.
\end{lemma}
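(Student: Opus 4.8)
The plan is to produce, for each occurrence of $\underline{w}$ in $\underline{\alpha}$ other than the leftmost one (the occurrence recorded by $\underline{x}\,\underline{w}$), a distinct nonzero letter lying inside $\underline{z}$. This gives $|\underline{z}|-|\underline{z}|_0\ge r-1$, which is the first inequality, and a symmetric count with the letter $k-1$ in place of $0$ gives the second. Throughout I write $q=|\underline{w}|$.

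First I would set up the reduction. Consider the suffix $\underline{w}\,\underline{z}$ of $\underline{\alpha}$, which begins where the leftmost $\underline{w}$ begins. Since $|\underline{x}\,\underline{w}|_{\underline{w}}=1$, the occurrence of $\underline{w}$ sitting as the suffix of $\underline{x}\,\underline{w}$ is the leftmost occurrence in all of $\underline{\alpha}$ (any occurrence starting earlier would lie entirely inside $\underline{x}\,\underline{w}$ and be counted there), so every other occurrence starts strictly to its right. Translating start positions to $\underline{w}\,\underline{z}$, the occurrences of $\underline{w}$ in $\underline{w}\,\underline{z}$ sit at positions $0=p_0<p_1<\dots<p_{r-1}$, where $p_0=0$ is the prefix $\underline{w}$ and $p_i\ge 1$ for $1\le i\le r-1$.

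Next, since $\underline{w}\notin\{0\}^*$, let $c$ be the largest index with $w_c\ne 0$. The key claim is that for every $i\ge 1$ the position $p_i+c$ of $\underline{w}\,\underline{z}$ carries the letter $w_c\ne 0$ and lies inside $\underline{z}$, i.e. $p_i+c\ge q$. When $p_i\ge q$ this is immediate. The main obstacle is the case $1\le p_i\le q-1$, in which the $i$-th occurrence overlaps the prefix $\underline{w}$: comparing the two copies of $\underline{w}$ on their overlap forces $w_{j}=w_{j+p_i}$ for $0\le j\le q-1-p_i$, so $p_i$ is a period of $\underline{w}$. I would then deduce $p_i\ge q-c$ from the maximality of $c$, which is the crux of the argument: if instead $p_i\le q-1-c$, then $c\le q-1-p_i$ would let the period relation give $w_{c+p_i}=w_c\ne 0$ with $c+p_i>c$, contradicting that $c$ is the rightmost nonzero index. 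In either case $p_i+c\ge q$, so the $r-1$ positions $p_1+c,\dots,p_{r-1}+c$ are distinct, lie in $\underline{z}$, and all carry the nonzero letter $w_c$. Hence $\underline{z}$ has at least $r-1$ nonzero letters, that is $|\underline{z}|_0\le |\underline{z}|-r+1$.

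Finally, for the second inequality I would rerun the identical argument with the roles of $0$ and $k-1$ exchanged: using $\underline{w}\notin\{k-1\}^*$, let $c'$ be the largest index with $w_{c'}\ne k-1$ and repeat the periodicity estimate to place a letter $\ne k-1$ at each of $r-1$ distinct positions of $\underline{z}$, yielding $|\underline{z}|_{k-1}\le |\underline{z}|-r+1$. The degenerate case $\underline{z}=\varepsilon$ (forcing $r=1$) makes both inequalities trivial, and the hypothesis $\underline{w}\notin\{0\}^*\cup\{k-1\}^*$ is exactly what guarantees that the two indices $c$ and $c'$ both exist.
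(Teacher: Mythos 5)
Your proof is correct, and its underlying idea --- each occurrence of $\underline{w}$ other than the leftmost one deposits a distinguished nonzero letter inside $\underline{z}$ --- is the same counting idea as the paper's own proof. The difference is one of rigor, and it is to your credit. The paper picks an \emph{arbitrary} index $j$ with $w_j>0$, notes that an occurrence at position $t$ forces the letter at position $t+j$ to be nonzero, and from this alone asserts $r\leq|\underline{x}\,\underline{w}|_{\underline{w}}+|\underline{z}|_{w_j}$. As written, this skips exactly the point you singled out as the crux: when an occurrence overlaps the leftmost copy of $\underline{w}$, the position $t+j$ may fall inside $\underline{x}\,\underline{w}$ rather than inside $\underline{z}$. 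For instance, with $k=2$, $\underline{w}=101$, $\underline{\alpha}=10101$ (so $\underline{x}=\varepsilon$, $\underline{z}=01$, $r=2$) and the choice $j=0$, the occurrence at $t=2$ gives $t+j=2$, which still lies in $\underline{x}\,\underline{w}$; the desired inequality survives only because the letter $1$ also occupies the \emph{largest} nonzero index of $\underline{w}$, whose translate does land in $\underline{z}$. Your argument --- take $c$ maximal with $w_c\neq 0$, observe that an overlap of shift $p_i$ makes $p_i$ a period of $\underline{w}$, and use maximality of $c$ to force $p_i+c\geq q$ --- is precisely the justification needed to make the paper's one-line count valid, and it also shows the landing positions $p_1+c<\dots<p_{r-1}+c$ are distinct. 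So your write-up supplies what the paper leaves implicit; nothing in it needs repair.
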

\begin{proof}
	Write $\underline{\alpha} = \alpha_0\alpha_1\dots\alpha_{\ell-1}$ and $\mathcal{A}=\big\{0\leq i\leq \ell-1: \alpha_i>0\big\}$. For any $\underline{w}=w_0w_1\dots w_{q-1}\notin \{0\}^*$, there exists $j\geq 0$ such that $w_j>0$. Hence $\alpha[t,t+q-1]=\underline{w}$ only if $t+j\in \mathcal{A}$. As a result,
	\[r=|\underline{\alpha}|_{\underline{w}}\leq |\underline{x}\,\underline{w}|_{\underline{w}} + |\underline{z}|_{w_j} \leq 1+|\underline{z}|-|\underline{z}|_0.
	\]
	Therefore, $|\underline{z}|_0\leq |\underline{z}|-r+1$. Similarly, for $\underline{w}\notin \{k-1\}^*$, we have $|\underline{z}|_{k-1} \leq |\underline{z}|-r+1$ as well.
\end{proof}
	
\begin{proposition}\label{prop:3}
	Fix $n\geq 1$ and $\underline{w}\in\Sigma_k^{*}\backslash\{\varepsilon\}$. If $\underline{w}\notin\{0\}^*\cup\{k-1\}^*$, then for all $N\geq \lceil\log_k(n)\rceil+2$, we have $P_{\mathbf{s}}^{(1)}(n,N)=P_{\mathbf{s}}^{(1)}(n,\lceil\log_k(n)\rceil+1)\geq 1$.
\end{proposition}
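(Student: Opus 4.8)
The plan is to reduce the statement to showing that the second-order difference $P^{(2)}_{\mathbf{s}}(n,N)$ vanishes for every $N\geq M:=\lceil\log_k(n)\rceil+2$; constancy of $P^{(1)}_{\mathbf{s}}(n,\cdot)$ then follows by telescoping. The engine is Proposition \ref{prop:2}: applied to the pair $(N-1,N)$ (legitimate since $N\geq M$ forces $N-1\geq\lceil\log_k n\rceil+1$), the map $\underline{u}\mapsto\underline{u}+\underline{1}$ is a bijection from $\mathcal{F}^{(1)}_{\mathbf{s}}(n,N-1)$ onto those $\underline{v}\in\mathcal{F}^{(1)}_{\mathbf{s}}(n,N)$ with $0\not\prec\underline{v}$, since the latter are exactly the factors all of whose letters are $\geq 1$. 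Hence $P^{(1)}_{\mathbf{s}}(n,N-1)=\sharp\{\underline{v}\in\mathcal{F}^{(1)}_{\mathbf{s}}(n,N)\colon 0\not\prec\underline{v}\}$, and therefore
\[P^{(2)}_{\mathbf{s}}(n,N)=\sharp\{\underline{v}\in\mathcal{F}^{(1)}_{\mathbf{s}}(n,N)\colon 0\prec\underline{v}\}=:\sharp\mathcal{G}.\]
So it suffices to prove $\mathcal{G}=\emptyset$, i.e. that no factor of length $n$ simultaneously contains the letter $0$ and the letter $N$.

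The key claim I would isolate is that every window $\mathbf{s}[m,m+n-1]$ with $n\leq k^{N-2}$ (note $N\geq M$ gives $k^{N-2}\geq n$) containing a position of value $0$ has all its values strictly below $N$; since a hypothetical $\underline{u}\in\mathcal{G}$ contains both $0$ and $N$, this forces $\mathcal{G}=\emptyset$. To prove the claim I would examine the base-$k$ expansions $(m+i)_k$, $0\leq i\leq n-1$. When no power of $k$ lies in the window, these share a common length $\ell$ and a longest common prefix $\underline{x}$; writing $(m+i)_k=\underline{x}\,\underline{u}^{(i)}$, the suffixes run over $n$ consecutive values and already differ in their leading digit, which forces the suffix length $\lambda=\ell-|\underline{x}|$ to satisfy $k^{\lambda-1}\leq n-1<k^{N-2}$, hence $\lambda\leq N-2$. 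Counting occurrences of $\underline{w}$ in $\underline{x}\,\underline{u}^{(i)}$, each occurrence lies either entirely inside $\underline{x}$ (a number $c$ independent of $i$) or starts among the last $q-1$ positions of $\underline{x}$ or inside the suffix (at most $\lambda$ of them), so $c\leq s(m+i)\leq c+\lambda$. A value $0$ forces $c=0$, whence every value is $\leq\lambda\leq N-2<N$. This bookkeeping is legitimate because $s(m+j)=N$ needs $\ell\geq N+q-1$, so $|\underline{x}|=\ell-\lambda\geq q+1$.

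The delicate part — and the place where the hypothesis $\underline{w}\notin\{0\}^*\cup\{k-1\}^*$ is essential — is when the window straddles a power $k^{\ell}$; it can cross at most one such power since $n\leq k^{N-2}$ is far smaller than $k^{\ell}$. Here the balance property of Lemma \ref{lem:1} breaks down for constant $\underline{w}$, and indeed for $\underline{w}=0^q$ or $(k-1)^q$ such boundary windows are exactly the ones producing $P^{(2)}_{\mathbf{s}}(n,N)=n-1$. I would split the window at $k^{\ell}$ into $W_1=[m,k^{\ell}-1]$ and $W_2=[k^{\ell},m+n-1]$ and repeat the common-prefix count on each. On $W_2$ every expansion begins with a $1$ followed only by $0$'s in the frozen prefix, and since $\underline{w}\neq 0^q$ the only occurrence of $\underline{w}$ that can sit there is one using the leading $1$, giving $s\leq 1+\lambda_2\leq 1+(N-2)<N$. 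On $W_1$ every expansion begins with a run of $(k-1)$'s, and since $\underline{w}\neq(k-1)^q$ no occurrence sits in the frozen prefix, giving $s\leq\lambda_1\leq N-2<N$. Thus the letter $N$ cannot occur in any window crossing a power of $k$, so such windows never belong to $\mathcal{F}^{(1)}_{\mathbf{s}}(n,N)$; every element of $\mathcal{G}$ would therefore fall under the non-crossing case already excluded. Hence $\mathcal{G}=\emptyset$ and $P^{(2)}_{\mathbf{s}}(n,N)=0$.

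Finally, $P^{(2)}_{\mathbf{s}}(n,N)=0$ for all $N\geq M$ telescopes to $P^{(1)}_{\mathbf{s}}(n,N)=P^{(1)}_{\mathbf{s}}(n,M-1)=P^{(1)}_{\mathbf{s}}(n,\lceil\log_k(n)\rceil+1)$. For the lower bound, I would argue that this common value cannot be $0$: otherwise $\mathcal{F}^{(1)}_{\mathbf{s}}(n,N)=\emptyset$ for every $N\geq\lceil\log_k(n)\rceil+1$, so no length-$n$ factor attains a maximum $\geq\lceil\log_k(n)\rceil+1$, contradicting the existence (by unboundedness of $\mathbf{s}$) of positions $m$ with $s(m)$ arbitrarily large. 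The main obstacle is precisely the power-of-$k$ crossing case of the key claim, where the digit bookkeeping is most delicate and where the assumption $\underline{w}\notin\{0\}^*\cup\{k-1\}^*$ is exactly what keeps the count of $\underline{w}$ in the frozen prefix bounded, so that a nearby $0$ still caps all values below $N$.
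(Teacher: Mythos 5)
Your overall strategy is the same as the paper's: use Proposition \ref{prop:2} to identify $P^{(2)}_{\mathbf{s}}(n,N)$ with the number of $\underline{u}\in\mathcal{F}^{(1)}_{\mathbf{s}}(n,N)$ containing the letter $0$, show this set $\mathcal{G}$ is empty, and telescope. Your reduction to $\sharp\mathcal{G}$ is correct, and your lower-bound argument (if the common value were $0$, every length-$n$ factor would have maximum $\leq\lceil\log_k n\rceil$, contradicting unboundedness of $\mathbf{s}$) is valid and even a little slicker than the paper's, which instead constructs an explicit element of $\mathcal{F}^{(1)}_{\mathbf{s}}(n,N)$ using the balance property of Lemma \ref{lem:1}. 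The problem is your proof that $\mathcal{G}=\emptyset$.

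The gap is in your non-crossing case. You claim that since the suffixes $\underline{u}^{(i)}$ (after removing the longest common prefix $\underline{x}$) run over $n$ consecutive values and two of them differ in their leading digit, the suffix length $\lambda$ satisfies $k^{\lambda-1}\leq n-1$, hence $\lambda\leq N-2$. This inference is false: a carry propagating through a long run of $(k-1)$'s changes the leading digit of the suffix while the values differ only by $1$. Concretely, take $k=10$, $n=2$, $(m)_k=3\,1\,9\,9\,9\,9\,9$ and $(m+1)_k=3\,2\,0\,0\,0\,0\,0$: no power of $k$ is crossed, the longest common prefix is $\underline{x}=3$, and $\lambda=6$, while $n-1=1$. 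So in the presence of an interior carry your bound $s(m+i)\leq c+\lambda$ is vacuous ($\lambda$ can be as large as $\ell-1$), and the conclusion ``a $0$ in the window forces all values $<N$'' does not follow. This interior-carry configuration $(m+a-1)_k=\underline{x}\,b\,(k-1)^{p}$, $(m+a)_k=\underline{x}\,(b+1)\,0^{p}$ is not a degenerate nuisance; it is exactly the hard case (for $\underline{w}=0^q$ or $(k-1)^q$ it is where the elements of $\mathcal{G}$ actually live), so a proof that silently excludes it proves nothing. The paper avoids the issue by arguing from the position $t$ where $s(m+t)=N$ rather than from the position of the $0$: writing $(m+t)_k=\underline{x}\,\underline{w}\,\underline{z}$ with the \emph{leftmost} occurrence of $\underline{w}$, Lemma \ref{lem: added} shows that $N$ occurrences of a non-constant $\underline{w}$ force $\underline{z}$ to contain at least $N-1$ nonzero digits and at least $N-1$ digits $\neq k-1$; hence $m+t$ lies at distance at least $[1^{N-1}]_k\geq k^{N-2}\geq n$ from both ends of the block of integers having prefix $\underline{x}\,\underline{w}$, so this prefix is frozen across the entire window and every $s(m+i)\geq 1$ — no case analysis on carries is needed. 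To repair your argument you would need precisely this kind of control (or the explicit carry analysis of Lemma \ref{prop-1} and Lemma \ref{lem: if}); your Case B (windows straddling a power of $k$) is fine but does not cover interior carries.
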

\begin{proof}
	Recall that $P_{\mathbf{s}}^{(1)}(n,N)=\sharp\mathcal{F}_{\mathbf{s}}^{(1)}(n,N)$. For every $\underline{u}\in \mathcal{F}_{\mathbf{s}}^{(1)}(n,N)$, we have $N\prec\underline{u}$ and $\underline{u}=s(m)s(m+1)\cdots s(m+n-1)$ for some $m\geq 0$. Suppose $s(m+t)=N$ where $0\leq t\leq n-1$. Write $(m+t)_k=\underline{x}\,\underline{w}\,\underline{z}$ such that $|\underline{x}\,\underline{w}|_{\underline{w}} = 1$. It follows from Lemma \ref{lem: added} that $|\underline{z}|_0\leq |\underline{z}|-N+1$ and $|\underline{z}|_{k-1}\leq |\underline{z}|-N+1$. Then 
	\[[\underline{x}\,\underline{w}\,0^{|\underline{z}|-N+1}\,1^{N-1}]_k \leq m+t \leq [\underline{x}\,\underline{w}\,(k-1)^{|\underline{z}|-N+1}\,(k-2)^{N-1}]_k.
	\]
	Since $0\leq t\leq n-1$ and $1\leq n\leq N^{k-2}$, for $0\leq i\leq n-1$ we have
	\[[\underline{x}\,\underline{w}\,0^{|\underline{z}|}]_k\leq m+i\leq [\underline{x}\,\underline{w}\,(k-1)^{|\underline{z}|}]_k,
	\]
	which implies $\underline{x}\,\underline{w}\prec (m+i)_k$ and namely $s(m+i)\geq 1$.
	Then applying Proposition \ref{prop:2}, we obtain that for all $N\geq \lceil\log_k(n)\rceil+2$, \[P_{\mathbf{s}}^{(1)}(n,N)=P_{\mathbf{s}}^{(1)}(n,N-1).\] Thus $P_{\mathbf{s}}^{(1)}(n,N)=P_{\mathbf{s}}^{(1)}(n,\lceil\log_k(n)\rceil+1)$. 
	
	Finally, letting  $m^{\prime}=[(\underline{w})^N]_k$, we see that $s(m^{\prime})=|(m^{\prime})_k|_{\underline{w}}\geq N$. Note that $n\leq k^{N-1}$ and $s(j)<N$ for $j=0,1,\cdots,n$. By Lemma \ref{lem:1}, there exists $n\leq t\leq m^{\prime}$ such that $s(t)=N$ and $s(j)<N$ for all $j<t$. Thus $s[t-n+1,t]\in\mathcal{F}_{\mathbf{s}}^{(1)}(n,N)$ and $P_{\mathbf{s}}^{(1)}(n,N)\geq 1$.
\end{proof}

\subsubsection*{Proof of Theorem \ref{thm: seq_s} and \ref{thm:1}}
\begin{proof}[Proof of Theorem \ref{thm:1}]
	The result follows from Proposition \ref{prop:6}, Proposition \ref{prop:4} and Proposition \ref{prop:3}. 
\end{proof}
\begin{proof}[Proof of Theorem \ref{thm: seq_s}]
	Combing the definitions \eqref{eq:diff-1}, \eqref{eq:diff-2} and Theorem \ref{thm:1}, the result holds.
\end{proof}

\subsection{Duality of \texorpdfstring{$\mathcal{F}^{(1)}_{\mathbf{s}_{k,\underline{w}}}(n,N)$}{difference set F(n,N)}} \label{section duality}
Let $\underline{u}=u_0u_1\cdots u_{l-1}\in \Sigma_k^l$. The \emph{conjugate} of $\underline{u}$ is defined as \[\operatorname{conj}(\underline{u}) = (k-1-u_0)(k-1-u_1)\cdots(k-1-u_{l-1}).\] We call $\mathcal{F}^{(1)}_{\mathbf{s}_{k,\operatorname{conj}(\underline{w})}}(n,N)$ the \emph{dual} of $\mathcal{F}^{(1)}_{\mathbf{s}_{k,\underline{w}}}(n,N)$. We show that for $\underline{w}\notin \{0\}^*\cup\{k-1\}^*$, \[\mathcal{F}^{(1)}_{\mathbf{s}_{k,\operatorname{conj}(\underline{w})}}(n,N)=\{\operatorname{mirr}(\underline{u}) \colon \underline{u}\in\mathcal{F}^{(1)}_{\mathbf{s}_{k,\underline{w}}}(n,N)\}\] where $N\geq \lceil \log_2(n)\rceil+2$ and $\operatorname{mirr}(\underline{u}) = u_{l-1}u_{l-2}\cdots u_0$ is the \emph{mirror} of $\underline{u}$; see Proposition \ref{prop: conj}. This implies that for all $\underline{w}\notin \{0\}^*\cup\{k-1\}^*$ and $N\geq \lceil \log_2(n)\rceil+2$, \[P^{(1)}_{\mathbf{s}_{k,\operatorname{conj}(\underline{w})}}(n,N)=P^{(1)}_{\mathbf{s}_{k,\underline{w}}}(n,N).\]

\begin{lemma}\label{lem: conj}
	Let $\underline{u},\underline{v}\in \Sigma_k^l$, $\underline{a}\in \Sigma_k^*$. Then $[\underline{u}]_k-[\underline{v}]_k= [\underline{a}\,\operatorname{conj}(\underline{v})]_k- [\underline{a}\,\operatorname{conj}(\underline{u})]_k$.
\end{lemma}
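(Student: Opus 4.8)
The plan is to reduce the claim to a single arithmetic identity describing how the base-$k$ realization map $[\,\cdot\,]_k$ interacts with conjugation. First I would record the \emph{complement identity}: for any word $\underline{t}\in\Sigma_k^l$ of length $l$,
\[
[\operatorname{conj}(\underline{t})]_k = k^l - 1 - [\underline{t}]_k .
\]
This follows immediately from the definition of $\operatorname{conj}$, since the $i$th letter of $\operatorname{conj}(\underline{t})$ is $k-1-t_i$, so that $[\operatorname{conj}(\underline{t})]_k = \sum_{i=0}^{l-1}(k-1-t_i)k^{l-1-i}$ and the geometric sum $\sum_{i=0}^{l-1}(k-1)k^{l-1-i} = k^l - 1$ separates off the constant term.

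Next I would invoke the elementary concatenation rule for the realization map: for any $\underline{a}\in\Sigma_k^*$ and any word $\underline{b}$, one has $[\underline{a}\,\underline{b}]_k = k^{|\underline{b}|}[\underline{a}]_k + [\underline{b}]_k$, which is just the defining expansion grouped by the prefix and suffix blocks. Applying this with $\underline{b}=\operatorname{conj}(\underline{u})$ and with $\underline{b}=\operatorname{conj}(\underline{v})$ — both of length $l$ because $|\underline{u}|=|\underline{v}|=l$ — and subtracting gives
\[
[\underline{a}\,\operatorname{conj}(\underline{v})]_k - [\underline{a}\,\operatorname{conj}(\underline{u})]_k = [\operatorname{conj}(\underline{v})]_k - [\operatorname{conj}(\underline{u})]_k ,
\]
since the two identical prefix contributions $k^l[\underline{a}]_k$ cancel.

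Finally I would substitute the complement identity into the right-hand side: the constants $k^l-1$ cancel, and $[\operatorname{conj}(\underline{v})]_k - [\operatorname{conj}(\underline{u})]_k = (k^l-1-[\underline{v}]_k) - (k^l-1-[\underline{u}]_k) = [\underline{u}]_k - [\underline{v}]_k$, which is exactly the left-hand side of the statement.

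The computation presents no genuine obstacle; the only point demanding care is the hypothesis $|\underline{u}|=|\underline{v}|=l$. It is precisely the equality of lengths that forces both prefix terms to carry the same factor $k^l$ and makes the two complement constants coincide, so that everything except $[\underline{u}]_k - [\underline{v}]_k$ cancels. I would therefore make sure that this assumption is flagged explicitly, since the identity genuinely fails when the two words have different lengths.
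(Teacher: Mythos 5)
Your proof is correct and takes essentially the same route as the paper: both expand $[\underline{a}\,\operatorname{conj}(\cdot)]_k$ as the prefix contribution $k^l[\underline{a}]_k$ plus the complement sum, and let the prefix terms and the constants cancel in the difference. (Incidentally, your complement constant $k^l-1$ is the correct value; the paper's intermediate expression $(k-1)(k^l-1)$ contains a harmless slip, since the constant cancels anyway.)
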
	
\begin{proof}		
	Let $\underline{u}=u_0u_1\cdots u_l$. Note that \[[\underline{a}\,\operatorname{conj}(\underline{u})]_k=[\underline{a}]_k k^{l}+\sum_{i=0}^{l-1}(k-1-u_i)k^{l-1-i}=[\underline{a}]_k k^{l}+(k-1)(k^l-1)-[\underline{u}]_k.\]
	So $[\underline{u}]_k-[\underline{v}]_k = \sum_{i=0}^{l-1}k^i(u_{l-1-i}-v_{l-1-i})= [\underline{a}\,\operatorname{conj}(\underline{v})]_k- [\underline{a}\,\operatorname{conj}(\underline{u})]_k$.
\end{proof}	
	
\begin{proposition}	\label{prop: conj}
	Suppose $n\geq 1$ and $N\geq  \lceil\log_k(n)\rceil+2$. Let $\underline{u}\in \Sigma_{N+1}^n$, $\underline{w}\in \Sigma_k^q$ and $\underline{w}\notin \{0\}^*\cup\{k-1\}^*$. Then $\underline{u}\in \mathcal{F}^{(1)}_{\mathbf{s}_{k,\underline{w}}}(n,N)$ if and only if $\operatorname{mirr}(\underline{u})\in \mathcal{F}^{(1)}_{\mathbf{s}_{k,\operatorname{conj}(\underline{w})}}(n,N)$.
\end{proposition}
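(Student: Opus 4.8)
The plan is to prove only the forward implication ``$\underline{u}\in\mathcal{F}^{(1)}_{\mathbf{s}_{k,\underline{w}}}(n,N)\Rightarrow\operatorname{mirr}(\underline{u})\in\mathcal{F}^{(1)}_{\mathbf{s}_{k,\operatorname{conj}(\underline{w})}}(n,N)$'' for every admissible word, and then to recover the converse for free by symmetry. Since $\operatorname{conj}$ and $\operatorname{mirr}$ are involutions and $\operatorname{conj}(\underline{w})\notin\{0\}^*\cup\{k-1\}^*$ whenever $\underline{w}\notin\{0\}^*\cup\{k-1\}^*$, applying the forward implication to the pair $(\operatorname{conj}(\underline{w}),\operatorname{mirr}(\underline{u}))$ and using $\operatorname{conj}(\operatorname{conj}(\underline{w}))=\underline{w}$ and $\operatorname{mirr}(\operatorname{mirr}(\underline{u}))=\underline{u}$ yields exactly the missing implication $\operatorname{mirr}(\underline{u})\in\mathcal{F}^{(1)}_{\mathbf{s}_{k,\operatorname{conj}(\underline{w})}}(n,N)\Rightarrow\underline{u}\in\mathcal{F}^{(1)}_{\mathbf{s}_{k,\underline{w}}}(n,N)$. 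For the normalisation, I fix a realization $\underline{u}=s(m)s(m+1)\cdots s(m+n-1)$; arguing as in Proposition \ref{prop:3}, for $N\geq\lceil\log_k(n)\rceil+2$ and $\underline{w}\notin\{0\}^*\cup\{k-1\}^*$ every entry satisfies $s(m+i)\geq1$, so no $m+i$ equals $k^{\ell}-1$ and the representations $(m)_k,\dots,(m+n-1)_k$ all share one length $\ell$. Prepending a block that creates no new occurrence of $\underline{w}$ (the device opening the proof of Lemma \ref{lem: onlyif}) I may further take $\ell$ as large as I wish. Write $\underline{u}_i=(m+i)_k\in\Sigma_k^{\ell}$.

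The two facts driving the construction are that conjugation is a letterwise bijection, so $|\underline{y}|_{\underline{w}}=|\operatorname{conj}(\underline{y})|_{\operatorname{conj}(\underline{w})}$ for every word $\underline{y}$, and Lemma \ref{lem: conj}, which for any fixed prefix $\underline{a}$ gives $[\underline{a}\,\operatorname{conj}(\underline{u}_i)]_k=[\underline{a}\,\operatorname{conj}(\underline{u}_0)]_k-i$. Hence, if $\underline{a}$ has a nonzero leading digit, the integers $M_i:=[\underline{a}\,\operatorname{conj}(\underline{u}_i)]_k$ satisfy $(M_i)_k=\underline{a}\,\operatorname{conj}(\underline{u}_i)$ and $M_i=M_0-i$, so they are consecutive and decreasing with $M_{n-1}+j=M_{n-1-j}$. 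Provided $\underline{a}$ is chosen so that $|\underline{a}|_{\operatorname{conj}(\underline{w})}=0$ and no occurrence of $\operatorname{conj}(\underline{w})$ straddles the junction $\underline{a}\mid\operatorname{conj}(\underline{u}_i)$, I obtain $s_{k,\operatorname{conj}(\underline{w})}(M_i)=|\underline{a}\,\operatorname{conj}(\underline{u}_i)|_{\operatorname{conj}(\underline{w})}=|\operatorname{conj}(\underline{u}_i)|_{\operatorname{conj}(\underline{w})}=|\underline{u}_i|_{\underline{w}}=u_i$. Reading $\mathbf{s}_{k,\operatorname{conj}(\underline{w})}$ forward from $M_{n-1}$ then gives $s(M_{n-1})s(M_{n-2})\cdots s(M_0)=u_{n-1}u_{n-2}\cdots u_0=\operatorname{mirr}(\underline{u})$, and since $\operatorname{mirr}(\underline{u})$ carries the same letters as $\underline{u}$ it lies in $\Sigma_{N+1}^{n}$ and contains $N$; thus $\operatorname{mirr}(\underline{u})\in\mathcal{F}^{(1)}_{\mathbf{s}_{k,\operatorname{conj}(\underline{w})}}(n,N)$, as required.

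Everything therefore reduces to exhibiting the prefix $\underline{a}$, and this is where I expect the real difficulty to sit. When some nonzero digit $g\neq\operatorname{conj}(\underline{w})_0$ exists — in particular whenever $k\geq3$ — I take $\underline{a}=g^{R}$ with $R$ large: as $\operatorname{conj}(\underline{w})$ is not a constant block it never occurs inside $g^{R}$, and no straddling occurrence can begin in the run of $g$'s, since its first letter would have to be $g\neq\operatorname{conj}(\underline{w})_0$. The delicate case is $k=2$, where every base-$2$ representation starts with $1$ and its conjugate starts with $0$, so a leading zero is unavoidable and the only nonzero digit $g=1$ need not differ from $\operatorname{conj}(\underline{w})_0$; by the symmetry reduction it suffices to treat $\operatorname{conj}(\underline{w})_0=1$. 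There I would instead pick $\underline{a}$ with leading digit $1$ that ends in $0^{q-1}$: any straddling occurrence must then begin at one of these trailing zeros and so start with $0\neq\operatorname{conj}(\underline{w})_0$, which is impossible, while $|\underline{a}|_{\operatorname{conj}(\underline{w})}=0$ is arranged by the same constructions tabulated in the proof of Lemma \ref{lem: onlyif}, together with a short separate check for degenerate words such as $\operatorname{conj}(\underline{w})=10^{q-1}$. This junction/leading-zero bookkeeping, handled uniformly over all $\underline{w}$, is the only genuine obstacle; once $\underline{a}$ is fixed, the bijection $M_i=M_0-i$ furnished by Lemma \ref{lem: conj} does the rest.
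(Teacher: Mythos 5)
Your proposal follows the paper's own proof almost step for step: prove the forward implication and recover the converse from the involutions $\operatorname{conj}$ and $\operatorname{mirr}$; normalize so that $(m)_k,\dots,(m+n-1)_k$ share one length; use Lemma \ref{lem: conj} to produce the consecutive integers $M_{n-1-j}=M_{n-1}+j$; and reduce everything to exhibiting a prefix $\underline{a}$ with nonzero leading digit that creates no occurrence of $\operatorname{conj}(\underline{w})$, internally or across the junction. Your choices for $k\geq3$ and for $k=2$ with $\operatorname{conj}(w_0)=0$ are correct (the aside that $\operatorname{conj}(\underline{w})$ ``is not a constant block'' is false in general, e.g.\ $k=3$, $\underline{w}=11$, but harmless: $g\neq\operatorname{conj}(w_0)$ already kills every occurrence starting inside $g^{R}$), and your non-degenerate $k=2$ choice works, e.g.\ with $\underline{a}=10^{q-1}$ itself.

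The genuine gap is the case you defer as ``a short separate check'': $k=2$ and $\operatorname{conj}(\underline{w})=10^{q-1}$, i.e.\ $\underline{w}=01^{q-1}$. Within your framework this case is not merely unfinished but impossible. First, the prefix you prescribe cannot exist: if $\underline{a}$ begins with $1$ and ends with $0^{q-1}$, the last $1$ of $\underline{a}$ is followed by at least $q-1$ zeros, so $10^{q-1}\prec\underline{a}$, contradicting $|\underline{a}|_{\operatorname{conj}(\underline{w})}=0$. Worse, for $q=2$ no prefix of any shape achieves ``no new occurrences'': every conjugate $\operatorname{conj}\big((m+i)_2\big)$ begins with $0$, so if $\underline{a}$ ends with $1$ the junction itself reads $10$, while if $\underline{a}$ ends with $0$ then $\underline{a}$, beginning with $1$, already contains $10$. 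So an extra idea is required, for instance: first prepend $1^{q}$ to all $(m+i)_2$ (this adds no occurrence of $01^{q-1}$, since that word begins with $0$), then take $\underline{a}=1$; now exactly one new occurrence of $10^{q-1}$ is created, uniformly in $i$, so you land on $\operatorname{mirr}(\underline{u})+\underline{1}\in\mathcal{F}^{(1)}_{\mathbf{s}_{k,\operatorname{conj}(\underline{w})}}(n,N+1)$, and Proposition \ref{prop:2} (applicable since $(2,10^{q-1})\neq(2,1)$) brings you down to $\operatorname{mirr}(\underline{u})\in\mathcal{F}^{(1)}_{\mathbf{s}_{k,\operatorname{conj}(\underline{w})}}(n,N)$. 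You should also be aware that this configuration is precisely where the paper's own table is shaky: for $\operatorname{conj}(w_0w_1)=10$ it picks $\underline{a}=11$, and when $q=2$ the junction again produces a spurious occurrence of $10$, so the published construction returns $\operatorname{mirr}(\underline{u})+\underline{1}$ rather than $\operatorname{mirr}(\underline{u})$. In short, you correctly isolated the delicate case, but it is exactly the one your proof leaves open, and closing it needs a device (such as the shift-by-one via Proposition \ref{prop:2}) that is absent from your proposal.
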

\begin{proof}
	Let $\underline{u}\in \mathcal{F}^{(1)}_{\mathbf{s}_{k,\underline{w}}}(n,N)$ and write $\underline{u} = s_{k,\underline{w}}(m)\cdots s_{k,\underline{w}}(m+n-1)$. Then there exists $0\leq i\leq n-1$ such that $|(m+i)_k|_{\underline{w}}=N$.
	Since $\underline{w}\notin \{0\}^*\cup\{k-1\}^*$, letting $\ell=|(m+i)_k|$, it follows that
	\[|(m+i)_k|_0\leq \ell-N \text{ and } |(m+i)_k|_{k-1}\leq \ell-N.\]
	Hence,
	\[[1\,0^{\ell-N}\,1^{N-1}]_k\leq m+i \leq [(k-1)^{\ell-N}(k-2)^N]_k.\]
	Noting that $n\leq k^{N-2}$, for any $0\leq j\leq n-1$ we have
	\[[1\,0^{\ell-1}]_k \leq m+i-n+1\leq m+j \leq m+i+n-1\leq [(k-1)^{\ell}],\]			
	which implies
	\begin{equation}\label{eq: conj}
		|(m+j)_k|=\ell.
	\end{equation}

	In order to show $\operatorname{mirr}(\underline{u})\in \mathcal{F}^{(1)}_{\mathbf{s}_{k,\operatorname{conj}(\underline{w})}}(n,N)$, we shall find an $m'$ such that for all $0\leq j\leq n-1$, \[|(m'+j)_k|_{\operatorname{conj}(\underline{w})} = |(m+n-1-j)_k|_{\underline{w}}.\]
	Let $\underline{w}=w_0w_1\cdots w_{q-1}$. If $k\geq 3$, we can select $\underline{a}\in \Sigma_k$ such that $\underline{a}\ne 0$ and $\underline{a}\neq k-1-w_0$. If $k=2$, then we choose
	\[\underline{a}=\begin{cases}
		1, & \text{if } \operatorname{conj}(w_0)=0,\\
		11, & \text{if } \operatorname{conj}(w_0w_1)=10,\\
		10, & \text{if } \operatorname{conj}(w_0w_1)=11.
	\end{cases}
	\]
	Letting $(m')_k=\underline{a}\,\operatorname{conj}\big((m+n-1)_k\big)$, it follows from \eqref{eq: conj} that for all $0\leq j\leq n-1$, $|(m+n-1-j)_k|=\ell$ and $\big|\operatorname{conj}\big((m+n-1-j)_k\big)\big|=\ell$. By Lemma \ref{lem: conj}, 
	\begin{align*}
		\left[\underline{a}\,\operatorname{conj}\big((m+n-1-j)_k\big)\right]_k-m^{\prime}= & \left[\underline{a}\,\operatorname{conj}\big((m+n-1-j)_k\big)\right]_k-\left[\underline{a}\,\operatorname{conj}\big((m+n-1)_k\big)\right]_k\\
		= &\ [(m+n-1)_k]_k-[(m+n-1-j)_k]_k=j
	\end{align*}
	which implies that $(m'+j)_k=\underline{a}\,\operatorname{conj}\big((m+n-1-j)_k\big)$. Therefore,
	\begin{align*}
		|(m'+j)_k|_{\operatorname{conj}(\underline{w})} &= \big|\underline{a}\, \operatorname{conj}\big((m+n-1-j)_k\big)\big|_{\operatorname{conj}(\underline{w})}\\
		&= \big| \operatorname{conj}\big((m+n-1-j)_k\big)\big|_{\operatorname{conj}(\underline{w})}\\
		&= \big| (m+n-1-j)_k\big|_{\underline{w}}.
	\end{align*} Thus $\operatorname{mirr}(\underline{u})=s_{k,\underline{w}}(m')\cdots s_{k,\underline{w}}(m'+n-1)\in \mathcal{F}^{(1)}_{\mathbf{s}_{k,\operatorname{conj}(\underline{w})}}(n,N)$.

	Noting that $\operatorname{conj}(\operatorname{conj}(\underline{u}))=\underline{u}$ and $\operatorname{mirr}(\operatorname{mirr}(\underline{w}))=\underline{w}$, the proof is completed.
\end{proof}

\begin{remark}
	Given $\underline{w}\notin \{0\}^*\cup\{k-1\}^*$, as shown in Theorem \ref{thm: seq_s}, we have $P^{(1)}_{\mathbf{s}_{k,\underline{w}}}(n,N) = d_0(k,\underline{w},n)$ for any $N\geq \lceil\log_k(n)\rceil+2$.
\end{remark}

\section*{Acknowledgement}
    This work was supported by Guangdong Basic and Applied Basic Research Foundation (No. 2021A1515010056) and Guangzhou Science and Technology program (No. 202102020294).

\end{document}